\documentclass[12pt,letterpaper]{article}
\usepackage{amsmath,amsfonts,amsthm,amssymb}

\hyphenation{ar-chi-me-de-an}

\setlength\overfullrule{5pt} 


\swapnumbers 
\newtheorem{lemma}{Lemma}[section]
\newtheorem{corollary}[lemma]{Corollary}
\newtheorem{theorem}[lemma]{Theorem}
\newtheorem{example}[lemma]{Example}
\newtheorem{assumptions}[lemma]{Standing Assumptions}

\theoremstyle{definition} 
\newtheorem{definition}[lemma]{Definition}

\newtheorem{remarks}[lemma]{Remarks}

\newcommand{\Nat}{{\mathbb N}}

\newcommand\reals{{\mathbb R}}


\newcommand{\cb}{(J\sb{p})\sb{p\in P}}

\newcommand{\sgn}{\operatorname{sgn}}
\newcommand{\dg}{\sp{\text{\rm o}}}
\newcommand{\opspec}{\operatorname{spec}}

\begin{document}

\title{Synaptic Algebras}

\author{David J. Foulis{\footnote{Emeritus Professor of 
Mathematics and Statistics, University of Massachusetts, 
Postal Address: 1 Sutton Court, Amherst, MA 01002, USA; 
foulis@math.umass.edu.}}} 

\date{}
\maketitle

\begin{center}
Dedicated to Dr. Sylvia Pulmannov\'{a} on the occasion of her 
70$\sp{{\rm th}}$ birthday.
\end{center}

\begin{abstract}
A synaptic algebra is both a special Jordan algebra and a spectral 
order-unit normed space satisfying certain natural conditions 
suggested by the partially ordered Jordan algebra of bounded 
Hermitian operators on a Hilbert space. The adjective ``synaptic," 
borrowed from biology, is meant to suggest that such an algebra  
coherently ``ties together" the notions of a Jordan algebra, a 
spectral order-unit normed space, a convex effect algebra, and 
an orthomodular lattice. Prototypic examples of synaptic algebras 
are the special Jordan algebra of all self-adjoint elements in 
a von Neumann algebra, the self-adjoint elements in a Rickart 
C*-algebra, the self-adjoint elements in an AW*-algebra, D. Topping's 
JW- and AJW-algebras, and the generalized Hermitian (GH-) algebras 
introduced and studied by the author and S. Pulmannov\'{a}. All 
the foregoing examples are norm complete, but synaptic algebras are 
more general, and even a commutative synaptic algebra need not be 
norm complete.
\end{abstract}

\noindent {\bf Key Words and Phrases:} order-unit norm, Jordan 
algebra, square root, positive and negative parts, absolute value, 
polar decomposition, convex effect algebra, projection, orthomodular 
lattice, carrier, simple element, inverses, regular element, spectral 
resolution, spectrum.  

\section{Introduction} \label{sc:Intro}  

Our purpose in this article is introduce and study a class 
of partially ordered algebraic structures, which we call \emph{synaptic 
algebras}, that are simultaneously spectral order-unit normed spaces 
\cite{SOUS} and special Jordan algebras \cite{McC}, and that also 
incorporate convex effect algebras \cite{GPBB} and orthomodular lattices 
\cite{Beran, Kalm}. We have borrowed from biology the adjective 
`synaptic,' which is derived from the Greek word `\emph{sunaptein},' 
meaning \emph{to join together}. A synaptic algebra (Definition 
\ref{df:SynapticAlgebra} below) is required to satisfy certain natural 
conditions suggested by an important spacial case, namely the partially 
ordered Jordan algebra of bounded Hermitian operators on a Hilbert space.

The generalized Hermitian (GH) algebras introduced and studied by 
Sylvia Pulmannov\'{a} and the author in \cite{GHAlg1, GHAlg2} are 
synaptic algebras that satisfy a rather strong additional condition 
on bounded ascending sequences of pairwise commuting elements. In  
a synaptic algebra, this condition is replaced by some of its 
important algebraic consequences together with a much weaker condition 
on ascending sequences---see Section \ref{sc:SGH} below for the details. 
Example \ref{ex:functions} below exhibits a commutative synaptic 
algebra which, in general, fails to be a GH-algebra, showing that 
synaptic algebras are proper generalizations of GH-algebras. 

Among other properties of a synaptic algebra $A$, we show that 
the set of projections $P$ in $A$ is the extreme boundary of 
the convex set $E$ of effects in $A$ (Theorem \ref{th:CharacterizeP}), 
$P$ is an orthomodular lattice (Theorem \ref{th:PisanOML}), an 
element in $A$ is (von Neumann) regular iff both its positive and 
negative parts are regular (Theorem \ref{th:RegPosNeg}), each  
element in $A$ determines and is determined by a spectral resolution 
in $P$ (Theorems \ref{th:SpecProps} and \ref{th:SpecTh}), and 
each element in $A$ is a norm limit of an ascending sequence of 
simple elements (Corollary \ref{co:AscendingLimit}). 

In the sequel, we use the symbols $\reals$ and $\Nat$ for the 
ordered field of real numbers and the set of positive integers, 
respectively. Also, we use `iff' as an abbreviation for `if and 
only if,' and the symbol `$:=$' means `equals by definition.' 
To help fix ideas in the following definition, the reader can 
keep in mind the special case in which $R$ is the algebra of all 
bounded linear operators on a Hilbert space, and $A$ is the real 
vector space of all self-adjoint operators in $R$.  

\begin{definition} \label{df:SynapticAlgebra}
Let $R$ be a linear associative algebra with unity element $1$ 
over $\reals$ and let $A$ be a (real) vector subspace of $R$.  
If $a,b\in A$ and $B\subseteq A$, we write $aCb$ iff $a$ and $b$ 
commute (i.e. $ab=ba$)\footnote{We understand that a product of 
elements of $A$ is the product as calculated in $R$.} and we 
define $$C(a) :=\{b\in A:aCb\},\ C(B) :=\bigcap\sb{b\in B}C(b),  
\text{\ and\ }CC(a) :=C(C(a)).$$ The vector space $A$ is a \emph{synaptic 
algebra} with \emph{enveloping algebra} $R$ iff the following 
conditions are satisfied:  
\begin{enumerate}
\item[SA1.] $A$ is a partially ordered archimedean real vector space 
 with positive cone $A\sp{+}=\{a\in A:0\leq a\}$, $1\in A\sp{+}$ is 
 an order unit in $A$, and $\|\cdot\|$ is the corresponding order-unit 
 norm.\footnote{See Definition \ref{df:OUNormSpace} below.}
\item[SA2.] If $a,b\in A\sp{+}$, then $aCb\Rightarrow ab\in A\sp{+}$.
\item[SA3.] If $a\in A$ then $a\sp{2}\in A\sp{+}$.
\item[SA4.] If $a,b\in A\sp{+}$, then $aba\in A\sp{+}$.
\item[SA5.] If $a\in A$ and $b\in A\sp{+}$, then $aba=0\Rightarrow 
 ab=ba=0$.
\item[SA6.] If $a\in A\sp{+}$, there exists $b\in A\sp{+}\cap CC(a)$ 
 such that $b\sp{2}=a$.
\item[SA7.] If $a\in A$, there exists $p\in A$ such that $p=p\sp{2}$ and, 
 for all $b\in A$, $ab=0\Leftrightarrow pb=0$.
\item[SA8.] If $1\leq a\in A$, there exists $b\in A$ such that $ab=ba=1$.
\item[SA9.] If $a,b\in A$, $a\sb{1}\leq a\sb{2}\leq a\sb{3}\leq\cdots$ 
 is an  ascending sequence of pairwise commuting elements of $C(b)$ 
 and $\lim\sb{n\rightarrow\infty}\|a-a\sb{n}\|=0$, then $a\in C(b)$.
\end{enumerate} 
We define $P :=\{p\in A:p=p\sp{2}\}$. Elements $p\in P$ are 
called \emph{projections}. We define the \emph{unit interval} 
$E$ in $A$ by $E :=\{e\in A:0\leq e\leq 1\}$. Elements $e\in E$ 
are called \emph{effects}.\footnote{Actually, $E$ is a so-called 
\emph{convex effect algebra} \cite{GPBB}.}
\end{definition}

If $R$ is a von Neumann algebra, then the real vector space $A$ of 
all self-adjoint elements in $R$ is a synaptic algebra. More generally, 
the self-adjoint elements in a Rickart C*-algebra \cite[\S 3]{HHL}, 
and in particular in an AW*-algebra \cite{Kap}, form a synaptic algebra. 
Additional examples of synaptic algebras are: JW-algebras \cite{Top65}, 
AJW-algebras \cite[\S\,20]{Top65}, JB-algebras \cite{ASS78}, and the 
ordered special Jordan algebras studied by Sarymsakov, \emph{et al.} 
\cite{Sary}. All the foregoing examples are norm complete, but the 
commutative synaptic algebra in the following example need not be 
norm complete.

\begin{example} \label{ex:functions}
Let ${\mathcal F}$ be a field of subsets of a nonempty set $X$, 
let $A$ be the commutative and associative real linear algebra, 
with pointwise operations, of all functions $f\colon X\to\reals$
such that {\rm (i)} $\lambda\in\reals\Rightarrow f\sp{-1}(\lambda)
\in{\mathcal F}$ and {\rm (ii)} $\{f(x):x\in X\}$ is finite. Then, 
with the pointwise partial order, $A$ is a synaptic algebra with 
$A$ as its own enveloping algebra. The projections in $A$ are the 
characteristic set functions {\rm (}indicator functions{\rm 
)} of sets in ${\mathcal F}$.
\end{example}

\begin{assumptions}
In the sequel, we assume that $A$ is a synaptic algebra with 
enveloping algebra\,\footnote{We shall not be concerned with the 
detailed structure of the enveloping algebra $R$---we regard $R$ 
merely as an arena in which to study $A$, $E$, and $P$.} $R$, that 
$E$ is the set of effects in $A$, and that $P$ is the set of 
projections in $A$. We understand that both $E$ and $P$ are 
partially ordered by the restrictions  of the partial order 
$\leq$ on $A$. To avoid triviality, we assume that $1\not=0$. As 
is customary, we shall identify each real number $\lambda\in
\reals$ with the element $\lambda1\in A$, so that $\reals$ is a 
one-dimensional linear subspace of $A$. If $n$ is one of 
$1,2,...,9$, then {\rm [SA}$n${\rm ]} will always refer to 
the corresponding condition in Definition {\rm \ref{df:SynapticAlgebra}}. 
\end{assumptions}

By [SA1], $A$ is an \emph{order-unit normed space} according 
to the following definition (adapted to our present notation).

\begin{definition} \label{df:OUNormSpace}
An \emph{order-unit normed space} \cite[pp. 67--69]{Alf}  is a 
partially ordered real vector space $A$ with a distinguished element 
$1\in A$, called the \emph{unit}, such that:
\begin{enumerate}
\item $A$ is \emph{archimedean}, i.e., if $a,b\in A$ and $na\leq b$ 
 for all $n\in\Nat$, then $-a\in A\sp{+}$.
\item $0<1$ and $1$ is an \emph{order unit}\,\footnote{Some authors use 
 the terminology ``strong order unit."} in $A$, i.e., for every $a\in A$,  
 there exists $n\in\Nat$ such that $a\leq n$.\footnote{Recall that we are 
 identifying $n\in\Nat\subseteq\reals$ with $n1$.}
\end{enumerate}
The \emph{order-unit norm} $\|\cdot\|$ on $A$ is defined by 
\begin{enumerate}
\setcounter{enumi}{2}
\item $\|a\| :=\inf\{\lambda\in\reals:0<\lambda\text{\ and\ }
 -\lambda\leq a\leq\lambda\}$.
\end{enumerate}
\end{definition}

The order-unit norm $\|\cdot\|$ is a \emph{bona fide} norm on $A$, 
and it is related to the partial-order structure of $A$ by the 
following properties\footnote{See \cite[Proposition II.1.2]{Alf} 
and \cite[Proposition 7.12 (c)]{Good}}, which we shall use routinely 
in the sequel: For all $a,b\in A$, 
\[
-\|a\|\leq a\leq\|a\|,\text{\ and if\ }-b\leq a\leq b,\text{\ then\ }
 \|a\|\leq\|b\|.
\]
\emph{If $(a\sb{n})\sb{n\in\Nat}$ is a sequence in $A$ and $a\in A$, 
the notation $\lim\sb{n\rightarrow\infty}a\sb{n}=a$, or simply 
$a\sb{n}\rightarrow a$, will mean that $a$ is the limit of 
$(a\sb{n})\sb{n\in\Nat}$ in the norm topology, i.e., that}  
$\lim\sb{n\rightarrow\infty}\|a-a\sb{n}\|=0$.

By [SA3], $a\in A\Rightarrow a\sp{2}\in A$, hence $A$ is organized 
into a special Jordan algebra \cite{McC} under the Jordan product 
$$a\circ b :=\frac12(ab+ba)=\frac12[(a+b)\sp{2}-a\sp{2}-b\sp{2}]
\in A\text{\ for all\ }a,b\in A.$$ Clearly, $1\circ a=a\circ 1=a$, 
i.e., $A$ is a \emph{unital} Jordan algebra.

\begin{remarks} \label{rm:Jordan} 
Let $a,b,c\in A$. Then $aCb\Rightarrow ab=ba=a\circ b\in A$. As 
$a\sp{2}\in A$ and $aCa\sp{2}$, it follows that $a\sp{3}=a\circ 
a\sp{2}\in A$, and by induction, $a\sp{n}\in A$ for all $n\in\Nat$. 
Consequently, $A$ is closed under the formation of real polynomials 
in $a$. Let $c :=2(a\circ b)$. Then $aba=a\circ c-a\sp{2}\circ b
\in A$, hence $aba\in A$. Thus, $acb+bca=(a+b)c(a+b)-aca-bcb\in A$. 
\end{remarks}

\begin{lemma} \label{lm:Norm}
Let $a,b\in A$ and $0<\lambda\in\reals$. Then:
{\rm (i)} $-\lambda\leq a\leq \lambda\Leftrightarrow a\sp{2}
\leq\lambda\sp{2}$. {\rm (ii)} $\|a\sp{2}\|=\|a\|\sp{2}$. 
{\rm (iii)} $0\leq a,b\Rightarrow\|a-b\|\leq\max\{\|a\|,\|b\|\}$.  
{\rm (iv)} $\|a\circ b\|\leq\|a\|\|b\|$. {\rm (v)} If $aCb$, 
then $\|ab\|\leq\|a\|\|b\|$. 
\end{lemma}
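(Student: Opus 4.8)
The plan is to prove the five parts of Lemma~\ref{lm:Norm} in an order that lets later parts use earlier ones, leaning on the order-unit norm properties displayed just before the statement (namely $-\|a\|\leq a\leq\|a\|$, and that $-b\leq a\leq b$ implies $\|a\|\leq\|b\|$) together with the axioms [SA2]--[SA4] and the Jordan-product facts in Remarks~\ref{rm:Jordan}.

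First I would establish (i), since (ii) is essentially a reformulation of it. For the forward direction of (i), assume $-\lambda\leq a\leq\lambda$, i.e.\ $\lambda-a\in A^{+}$ and $\lambda+a\in A^{+}$. These two elements commute (both are polynomials in $a$), so by [SA2] their product $(\lambda-a)(\lambda+a)=\lambda^{2}-a^{2}$ lies in $A^{+}$, giving $a^{2}\leq\lambda^{2}$. For the converse, assume $a^{2}\leq\lambda^{2}$. The natural move is to write $(\lambda\pm a)$ as (up to scaling) something manifestly positive: consider $(\lambda-a)$ and note $\lambda(\lambda-a)+\tfrac12(\text{something})$ --- more cleanly, one computes that $2\lambda(\lambda- a)=(\lambda-a)^{2}+(\lambda^{2}-a^{2})$ (using that $a$ commutes with $\lambda$), and both summands are in $A^{+}$ by [SA3] and the hypothesis, so $\lambda-a\in A^{+}$; symmetrically $\lambda+a\in A^{+}$. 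This yields $-\lambda\leq a\leq\lambda$. Then (ii) follows: by the definition of the order-unit norm, $\|a\|$ is the infimum of the $\lambda>0$ with $-\lambda\leq a\leq\lambda$, which by (i) is the infimum of the $\lambda>0$ with $a^{2}\leq\lambda^{2}$, i.e.\ $\sqrt{\|a^{2}\|}$; squaring gives $\|a^{2}\|=\|a\|^{2}$.

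Next I would do (iii). For $0\leq a,b$, I want to squeeze $a-b$ between $\pm\max\{\|a\|,\|b\|\}$. Setting $\mu:=\max\{\|a\|,\|b\|\}$, I have $0\leq a\leq\|a\|\leq\mu$ and $0\leq b\leq\|b\|\leq\mu$, hence $a-b\leq a\leq\mu$ and $-(a-b)=b-a\leq b\leq\mu$, so $-\mu\leq a-b\leq\mu$ and therefore $\|a-b\|\leq\mu$ by the displayed norm property. For (iv), I would use the polarization-type identity for the Jordan product together with (ii): since $\|a\circ b\|=\tfrac12\|(a+b)^{2}-a^{2}-b^{2}\|$ is awkward directly, a cleaner route is to reduce to the commuting case or to use $\|a\circ b\|\le\tfrac12(\|ab\|+\|ba\|)$ --- but $ab$ need not lie in $A$, so I would instead argue via the order structure: bound $\pm(a\circ b)$ by $\tfrac12(\|a\|^{2}+\|b\|^{2})$-type estimates after normalizing, or invoke that the order-unit norm on the Jordan algebra is a Banach-algebra-type norm for $\circ$.

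The main obstacle I anticipate is part (iv), precisely because the raw product $ab$ lives in the enveloping algebra $R$ and need not belong to $A$, so I cannot freely manipulate $\|ab\|$ inside $A$. The safe strategy is: prove (v) first under the commuting hypothesis, where $ab=a\circ b\in A$ by Remarks~\ref{rm:Jordan}, using (i)/(ii) via $\|ab\|^{2}=\|(ab)^{2}\|=\|a^{2}b^{2}\|\leq\|a^{2}\|\,\|b^{2}\|$ (the middle equality needs $aCb$ so that $(ab)^2=a^2b^2$, and the final inequality needs a sub-multiplicativity of the norm on the commutative subalgebra $CC(\{a,b\})$, which is itself order-unit normed). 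Then for the general Jordan bound (iv), I would apply the commuting estimate to the pairwise-commuting elements $a+b$, $a$, $b$ appearing in the polarization identity, combined with the triangle inequality and (ii), to absorb the cross terms; the care needed to keep every intermediate product inside a commutative (hence tractable) subalgebra is where the real work lies.
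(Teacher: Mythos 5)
Parts (i), (ii), and (iii) of your proposal are correct and essentially identical to the paper's argument: the forward direction of (i) via [SA2] applied to the commuting positive elements $\lambda\pm a$, the converse via the identity $2\lambda(\lambda-a)=(\lambda-a)^{2}+(\lambda^{2}-a^{2})$ and [SA3], part (ii) as an immediate consequence, and part (iii) by squeezing $a-b$ between $\pm\max\{\|a\|,\|b\|\}$ using positivity of $a$ and $b$.

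The genuine gap is part (iv), which you correctly identify as the hard case but never actually prove. Your concrete suggestion --- ``apply the commuting estimate to the pairwise-commuting elements $a+b$, $a$, $b$ appearing in the polarization identity'' --- fails at the outset, because $a+b$, $a$, $b$ are pairwise commuting if and only if $aCb$, which is exactly the hypothesis you do not have in (iv). And the fallback of using the triangle inequality on $a\circ b=\tfrac12[(a+b)^{2}-a^{2}-b^{2}]$ only yields $\|a\circ b\|\leq\tfrac12(\|a+b\|^{2}+\|a\|^{2}+\|b\|^{2})\leq 3$ after normalizing to $\|a\|=\|b\|=1$, not the required bound $1$. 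The missing idea --- and the reason the paper proves (iii) at all --- is to use the \emph{other} polarization identity $a\circ b=\tfrac14[(a+b)^{2}-(a-b)^{2}]$, which exhibits $4(a\circ b)$ as a difference of two \emph{positive} elements (by [SA3]); then (iii) gives $\|a\circ b\|\leq\tfrac14\max\{\|(a+b)^{2}\|,\|(a-b)^{2}\|\}$, and (ii) together with $\|a\pm b\|\leq 2$ bounds this by $1$. Once (iv) is in hand, (v) is immediate since $aCb$ implies $ab=a\circ b$; your alternative route to (v) via $\|ab\|^{2}=\|(ab)^{2}\|=\|a^{2}b^{2}\|$ can be completed (the needed submultiplicativity for commuting positive elements follows from [SA2] applied to $(\|a^{2}\|-a^{2})b^{2}$), but you left that step as an appeal to an unproved ``sub-multiplicativity on the commutative subalgebra,'' which as stated is circular.
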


\begin{proof}
If $-\lambda\leq a\leq \lambda$, then $0\leq\lambda-a,\lambda+a$, 
and as $(\lambda-a)C(\lambda+a)$, [SA2] implies that $0\leq
(\lambda-a)(\lambda+a)=\lambda\sp{2}-a\sp{2}$. Conversely, suppose 
that $a\sp{2}\leq\lambda\sp{2}$. Then $0\leq(\lambda-a)\sp{2}$ by 
[SA3], whence $0\leq(\lambda\sp{2}-a\sp{2})+(\lambda-a)\sp{2}=
2(\lambda\sp{2}-\lambda a)$ and since $0<\lambda$, it follows that 
$a\leq\lambda$. As $a\sp{2}\leq\lambda\sp{2}$, we also have $(-a)
\sp{2}\leq\lambda\sp{2}$, whence $-a\leq\lambda$, i.e., $-\lambda
\leq a$, proving (i).  Part (ii) follows from (i). To prove (iii), 
we can assume that $\|a\|\leq\|b\|$. As $0\leq b$, we have $a\leq
\|a\|\leq\|b\|\leq\|b\|+b$, whence $a-b\leq\|b\|$. Also, as $0
\leq a$, we have $b\leq\|b\|\leq\|b\|+a$, whence $b-a\leq\|b\|$, 
and therefore $-\|b\|\leq a-b\leq\|b\|$. Consequently, $\|a-b\|
\leq\|b\|=\max\{\|a\|,\|b\|\}$. To prove (iv), it will be sufficient 
by normalization to prove that $\|a\|=\|b\|=1\Rightarrow\|a\circ b\|
\leq 1$. Thus, we assume $\|a\|=\|b\|=1$, so that $\|a\pm b\|
\leq 2$, and therefore by (ii), $\|(a\pm b)\sp{2}\|\leq 4$. 
Consequently, by (iii),
\[
\|a\circ b\|=\frac{1}{4}\|(a+b)\sp{2}-(a-b)\sp{2}\|\leq
 \frac{1}{4}\max\{\|(a+b)\sp{2}\|,\|(a-b)\sp{2}\|\}\leq 1.
\]
If $aCb$, then $ab=a\circ b$, so (v) follows immediately from (iv).
\end{proof}

\section{Square Roots, Projections, and Carriers} \label{sc:SRAB} 

\begin{remarks} \label{rm:NoNilpotents}
Let $a,b\in A$. Then: (i) By [SA5] with $b=1$, we have $a\sp{2}=0
\Rightarrow a=0$. (ii) If $0\leq a,b$ and $a+b=0$, then $0\leq a=
-b\leq 0$, whence $a=b=0$.
\end{remarks}

\begin{theorem} \label{th:UniqueSR}
Let $0\leq a\in A$. Then there exists a unique $r\in A$ such 
that $0\leq r$ and $r\sp{2}=a$; moreover, $r\in CC(a)$.
\end{theorem}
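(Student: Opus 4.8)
The plan is to read off existence and the membership $r \in CC(a)$ directly from [SA6], reserving the real work for uniqueness. Indeed, [SA6] hands us an $r \in A^{+} \cap CC(a)$ with $r^{2} = a$, which settles everything except that this $r$ is the \emph{only} nonnegative square root. So suppose $s \in A^{+}$ also satisfies $s^{2} = a$; the goal is to prove $s = r$.

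The decisive first step is to establish that $r$ and $s$ commute. Since $s$ commutes with $s^{2} = a$, we have $s \in C(a)$; and because $r \in CC(a) = C(C(a))$ commutes with everything in $C(a)$, it follows that $rCs$. This is exactly where the $CC(a)$ clause of [SA6] earns its keep, and I expect it to be the conceptual crux: without it, two nonnegative square roots of $a$ need not commute, and the algebra below would be unavailable.

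With commutativity in hand, set $d := r - s$, so that $d$ commutes with both $r$ and $s$, and $(r+s)d = d(r+s) = r^{2} - s^{2} = 0$. Now $d^{2} \in A^{+}$ by [SA3], and since $d^{2}$ commutes with the nonnegative element $r$, [SA2] gives $drd = d^{2}r \in A^{+}$; likewise $dsd = d^{2}s \in A^{+}$. Note the subtlety here: one cannot invoke [SA4], because $d$ itself need not be nonnegative, so the required positivity must instead be drawn from [SA2] applied to the square $d^{2}$. Their sum is $d^{2}(r+s) = d\,[d(r+s)] = 0$, so by Remarks~\ref{rm:NoNilpotents}(ii) both $drd = 0$ and $dsd = 0$.

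Finally, from $drd = 0$ with $r \in A^{+}$, [SA5] yields $dr = rd = 0$, and symmetrically $ds = sd = 0$. Hence $d^{2} = d(r - s) = dr - ds = 0$, and Remarks~\ref{rm:NoNilpotents}(i) forces $d = 0$, that is $s = r$, completing the uniqueness argument.
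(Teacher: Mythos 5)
Your proposal is correct, and the uniqueness argument is genuinely different from the paper's. Both proofs begin identically: existence and the membership in $CC(a)$ come straight from [SA6], and the crucial commutativity $rCs$ is extracted exactly as you do, from $s\in C(s\sp{2})=C(a)$ together with $r\in CC(a)$. Where you diverge is in how the relation $(r+s)(r-s)=0$ is parlayed into $r=s$. The paper invokes [SA6] twice more to produce fourth roots of $a$ (elements $s,t$ with $s\sp{2}=b$, $t\sp{2}=r$ in its notation), writes $(s(b-r))\sp{2}+(t(b-r))\sp{2}=(b\sp{2}-r\sp{2})(b-r)=0$, and uses [SA3] plus the no-nilpotents remark to conclude $b(b-r)=r(b-r)=0$. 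You avoid the fourth roots entirely: setting $d:=r-s$, you get positivity of $drd=d\sp{2}r$ and $dsd=d\sp{2}s$ from [SA3] and [SA2] (correctly noting that [SA4] is unavailable since $d$ need not be positive, and that Theorem \ref{th:QuadLinOP} comes only later), observe their sum is $d\sp{2}(r+s)=0$, and then use Remarks \ref{rm:NoNilpotents}(ii) and [SA5] to get $dr=ds=0$, hence $d\sp{2}=0$ and $d=0$. Your route is shorter, economizes on applications of [SA6], and leans instead on [SA2] and [SA5]; the paper's route keeps all intermediate positivity facts as literal squares so that only [SA3] is needed for them. Both are sound, and both correctly identify the $CC(a)$ clause of [SA6] as the point where the argument would otherwise collapse.
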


\begin{proof}
Suppose that $0\leq a\in A$. By [SA6], there exists $b\in CC(a)$ 
such that $0\leq b$ and $b\sp{2}=a$. As $a\in C(a)$, we have $aCb$. 
Suppose also that $r\in A$ with $0\leq r$, $r\sp{2}=a$. Obviously, 
$rCa$, whence $bCr$. It will be sufficient to prove that $r=b$. 

By [SA6], there exists $s\in CC(b)$ such that $0\leq s$ and 
$s\sp{2}=b$. As $b,r\in C(b)$, we have $sCb$ and $sCr$. By [SA6] 
again, there exists $t\in CC(r)$ such that $0\leq t$ and $t\sp{2}=r$.
As $b,r\in C(r)$, we have $tCb$ and $tCr$. 

Since $sCb$ and $sCr$, it follows that $sC(b-r)$, hence $s(b-r)=
s\circ(b-r)\in A$. Likewise, since $tCb$ and $tCr$, we have 
$t(b-r)\in A$.  Moreover, as $b\sp{2}=r\sp{2}=a$, it follows that
\[
(s(b-r))\sp{2}+(t(b-r))\sp{2}=(s\sp{2}+t\sp{2})(b-r)\sp{2}
=(b+r)(b-r)\sp{2}=(b\sp{2}-r\sp{2})(b-r)=0.
\]
But $0\leq (s(b-r))\sp{2}$ and $0\leq(t(b-r))\sp{2}$ by [SA3], 
whence $(s(b-r))\sp{2}=(t(b-r))\sp{2}=0$, so $s(b-r)=t(b-r)=0$ by 
Remarks \ref{rm:NoNilpotents}.

As $s(b-r)=0$, it follows that $b(b-r)=s\sp{2}(b-r)=0$. Likewise, 
$r(b-r)=t\sp{2}(b-r)=0$, whence $(b-r)\sp{2}=b(b-r)-r(b-r)=0$, 
and by Remarks \ref{rm:NoNilpotents} (i), $r=b$.
\end{proof}

If $0\leq a\in A$, then of course, the unique element $r$ in Theorem  
\ref{th:UniqueSR} is called the \emph{square root} of $a$, and 
in what follows \emph{we denote it in the usual way as $a\sp{1/2}$.}

\begin{remarks} \label{rm:PsubsetofE}
Let $p\in P$. Then, as $p=p\sp{2}$, [SA3] implies that 
$0\leq p$. Also, $(1-p)\sp{2}=1-2p+p\sp{2}=1-p$, so $1-p\in P$, 
and therefore $0\leq 1-p$, i.e., $p\leq 1$. Consequently, 
$0\leq p\leq 1$, and it follows that $P\subseteq E$.
\end{remarks}   

\begin{theorem} \label{th:e<=p}
Let $e\in E$ and $p\in P$. Then the following conditions are 
mutually equivalent: {\rm (i)} $e\leq p$. {\rm (ii)} $e=ep=pe$. 
{\rm (iii)} $e=pep$. {\rm (iv)} $e=ep$. {\rm (v)} $e=pe$.
\end{theorem}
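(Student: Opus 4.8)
The plan is to route every equivalence through condition (ii), proving (i) $\Leftrightarrow$ (ii) and (ii) $\Leftrightarrow$ (iii), (ii) $\Leftrightarrow$ (iv), (ii) $\Leftrightarrow$ (v); by transitivity this yields the full set of mutual equivalences. The central device throughout is the complementary projection $1-p$, which by Remarks \ref{rm:PsubsetofE} lies in $P\subseteq E$ so that $0\leq 1-p$, together with the combined force of [SA4] and [SA5].

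First I would handle (i) $\Rightarrow$ (ii), which I expect to carry most of the analytic weight. Starting from $e\leq p$, I form the sandwich $(1-p)(p-e)(1-p)$: since $0\leq 1-p$ and $0\leq p-e$, condition [SA4] places it in $A^{+}$. Expanding the product in $R$ and using $(1-p)p=p-p^{2}=0$ collapses it to $-(1-p)e(1-p)$, while a second application of [SA4], now to $1-p$ and $e$, shows $(1-p)e(1-p)\in A^{+}$. Thus $(1-p)e(1-p)$ and its negative are both positive, so Remarks \ref{rm:NoNilpotents}(ii) forces $(1-p)e(1-p)=0$. Now [SA5], with $a=1-p$ and $b=e$, promotes this two-sided vanishing to $(1-p)e=e(1-p)=0$, which is precisely $pe=e=ep$, i.e. (ii).

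For the converse (ii) $\Rightarrow$ (i) and the links to (iii), I would argue purely algebraically in $R$. From (ii) one gets $pep=p(ep)=pe=e$ in a single line, which is (iii) and also gives $p(1-e)p=p^{2}-pep=p-e$; then [SA4] applied to $0\leq p$ and $0\leq 1-e$ (recall $e\leq 1$) yields $p-e\in A^{+}$, i.e. $e\leq p$, giving (i). Conversely $e=pep$ returns $pe=p(pep)=pep=e$ and $ep=(pep)p=pep=e$ using $p^{2}=p$, so (iii) $\Rightarrow$ (ii). Conditions (iv) and (v) are immediate consequences of (ii).

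The remaining spokes (iv) $\Rightarrow$ (ii) and (v) $\Rightarrow$ (ii) are where I expect the conceptual obstacle to surface, and they again hinge on the asymmetry-breaking supplied by [SA5]. Given only $e=ep$, I would observe $e(1-p)=e-ep=0$, whence $(1-p)e(1-p)=(1-p)\bigl(e(1-p)\bigr)=0$ at once, and then [SA5] delivers $(1-p)e=0$, that is $pe=e$, so combined with the hypothesis we obtain (ii); condition (v) is handled symmetrically by noting $(1-p)e=0$. The real difficulty is exactly that a one-sided relation such as $e=ep$ does not visibly force the opposite-sided relation $e=pe$, because $A$ carries no adjoint with which to transpose the equation. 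It is [SA5]—upgrading the vanishing of a symmetric product $xbx$ to the separate vanishing of $xb$ and $bx$—that restores the missing symmetry, and the key maneuver is recognizing that each one-sided hypothesis can be funneled into a vanishing symmetric sandwich of the form $(1-p)e(1-p)=0$.
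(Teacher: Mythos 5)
Your proof is correct and follows essentially the same route as the paper: the implication (i) $\Rightarrow$ (ii) via the positive sandwich $(1-p)e(1-p)=0$ killed by [SA4], Remarks \ref{rm:NoNilpotents}, and [SA5], the algebraic links among (ii)--(v) using $p=p\sp{2}$ and [SA5] to convert the one-sided relations into two-sided ones, and a positivity argument to return to (i). The only cosmetic differences are your hub-and-spoke organization through (ii) in place of the paper's cycle, and your use of [SA4] on $p(1-e)p$ where the paper invokes [SA2] on the commuting product $(1-e)p$.
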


\begin{proof}
(i) $\Rightarrow$ (ii). Assume that $e\leq p$ and let $d :=p-e$. 
Then $0\leq e,d,1-p$, $e+d=p$, and 
\[
(1-p)e(1-p)+(1-p)d(1-p)=(1-p)p(1-p)=0.
\]
By [SA4], $0\leq (1-p)e(1-p),\,(1-p)d(1-p)$, and it follows from 
Remarks \ref{rm:NoNilpotents} (ii) that $(1-p)e(1-p)=(1-p)d(1-p)=0$. 
Therefore, by [SA5], $(1-p)e=e(1-p)=0$, i.e., $e=pe=ep$.

\smallskip 

(ii) $\Rightarrow$ (iii) $\Rightarrow$ (iv). Follows from 
$p=p\sp{2}$.

\smallskip

(iv) $\Leftrightarrow$ (v). By [SA5],
$e=ep\Rightarrow e(1-p)=0\Rightarrow (1-p)e(1-p)=0\Rightarrow(1-p)e=0
\Rightarrow e=pe$, and the converse implication follows by symmetry.

\smallskip

(v) $\Rightarrow$ (i).  Assume (v). Since (iv) 
$\Leftrightarrow$ (v), we have $pe=ep=e$, so $(1-e)p=p(1-e)
=p-e$, whence  $0\leq p-e$ by [SA2], and therefore $e\leq p$.
\end{proof}

\begin{lemma} \label{lm:eSquared}
Let $e\in E$. Then: {\rm (i)} $e\sp{2}\in E$ with $0\leq e\sp{2}\leq e$. 
{\rm (ii)} $2e-e\sp{2}\in E$. {\rm (iii)} $e-e\sp{2}\in E$ with $e-e
\sp{2}\leq e,\,1-e$. 
\end{lemma}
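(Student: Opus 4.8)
The plan is to derive all three parts from [SA2] and [SA3] together with the single observation that $e$ commutes with $1-e$, since both are real polynomials in $e$ (Remarks \ref{rm:Jordan}). The engine of the argument is the product $e(1-e)=e-e^{2}$: as $0\leq e$ and $0\leq 1-e$ with $eC(1-e)$, [SA2] gives $e-e^{2}\in A^{+}$, that is, $0\leq e-e^{2}$, equivalently $e^{2}\leq e$. Alongside this I would keep at hand the two squares $e^{2}\in A^{+}$ and $(1-e)^{2}=1-2e+e^{2}\in A^{+}$ furnished by [SA3].

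Part (i) then falls out at once: $0\leq e^{2}$ from [SA3] and $e^{2}\leq e$ from the displayed computation combine with $e\leq 1$ to give $0\leq e^{2}\leq e\leq 1$, so $e^{2}\in E$ with $0\leq e^{2}\leq e$.

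For part (iii), I already have $0\leq e-e^{2}$, and since $0\leq e^{2}$ this also gives $e-e^{2}\leq e$. Rearranging $(1-e)^{2}\geq 0$ yields $2e-e^{2}\leq 1$, i.e. $e-e^{2}\leq 1-e$; because $1-e\leq 1$ this forces $e-e^{2}\leq 1$ as well, so $e-e^{2}\in E$ with $e-e^{2}\leq e,\,1-e$. For part (ii), the tidy route is the identity $2e-e^{2}=1-(1-e)^{2}$: the nonnegativity of $(1-e)^{2}$ gives $2e-e^{2}\leq 1$, while writing $2e-e^{2}=e+(e-e^{2})$ as a sum of two nonnegative elements gives $0\leq 2e-e^{2}$, and hence $2e-e^{2}\in E$.

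There is no real obstacle here; each inequality is an immediate application of [SA2] or [SA3]. The only point needing a moment's attention is verifying that [SA2] applies, which rests on $eC(1-e)$, and matching each claimed inequality to the correct identity among $e-e^{2}=e(1-e)$, $(1-e)^{2}=1-2e+e^{2}$, and $2e-e^{2}=1-(1-e)^{2}$.
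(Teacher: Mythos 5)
Your proof is correct and follows essentially the same route as the paper: both rest on applying [SA2] to $e(1-e)$ using $eC(1-e)$, and [SA3] to $e^{2}$ and $(1-e)^{2}$, with only a cosmetic reordering of which inequalities are packaged under which part. No gaps.
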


\begin{proof}
By [SA3], $0\leq e\sp{2}$, and as $eC(1-e)$ with $0\leq e,1-e$, 
[SA2] implies that $0\leq e(1-e)$, whence $0\leq e\sp{2}\leq e
\leq 1$, proving (i). Also, $0\leq(1-e)\sp{2}=1-2e+e\sp{2}$, so 
by (i), $0\leq e+(e-e\sp{2})=2e-e\sp{2}\leq 1$, proving (ii). 
Part (iii) follows from (i) and (ii). 
\end{proof}

Obviously, $E$ is a convex set, and by Remarks \ref{rm:PsubsetofE}, 
$P\subseteq E$. The following theorem characterizes, in various 
ways, those effects $p\in E$ that are projections.

\begin{theorem}  \label{th:CharacterizeP}
If $p\in E$, then the following conditions are mutually equivalent:
\begin{enumerate}
\item $p\in P$.
\item If $\lambda\in\reals$, $0<\lambda<1$, and $e\in E$, then 
 $\lambda e\leq p\Leftrightarrow e\leq p$.
\item $p$ is an extreme point of the convex set $E$.
\item If $e,f,e+f\in E$, then $e,f\leq p\Rightarrow e+f\leq p$.
\item If $e\in E$ and $e\leq p,1-p$, then $e=0$. 
\end{enumerate}
\end{theorem}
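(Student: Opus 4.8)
The plan is to prove the cyclic chain of implications (i) $\Rightarrow$ (ii) $\Rightarrow$ (iii) $\Rightarrow$ (iv) $\Rightarrow$ (v) $\Rightarrow$ (i), using Theorem \ref{th:e<=p} and Lemma \ref{lm:eSquared} as the main tools. The easy direction of the whole circuit will be showing that a projection satisfies the various conditions; the structurally interesting direction will be recovering idempotence from each of the boundary/extremality conditions.

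For (i) $\Rightarrow$ (ii): suppose $p\in P$, let $0<\lambda<1$, and $e\in E$. The implication $e\leq p\Rightarrow\lambda e\leq p$ is immediate since $\lambda e\leq e\leq p$. Conversely, assume $\lambda e\leq p$. By Theorem \ref{th:e<=p} applied to the effect $\lambda e$, we get $\lambda e=p(\lambda e)=(\lambda e)p$, so $\lambda(pe)=\lambda e$, and dividing by $\lambda$ gives $e=pe=ep$; Theorem \ref{th:e<=p} then returns $e\leq p$.

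For (ii) $\Rightarrow$ (iii): I would show that if a projection-like boundary condition (ii) holds then $p$ is extreme in $E$. Suppose $p=\tfrac12(e+f)$ with $e,f\in E$; then $\tfrac12 e\leq p$, so by (ii) (with $\lambda=\tfrac12$) we get $e\leq p$, and symmetrically $f\leq p$. Since $e+f=2p$, I can write $f=2p-e$ and use $e\leq p$ together with $f\leq p$ to force $e=f=p$ by an antisymmetry argument (from $e\leq p$ and $p\leq e$, the latter obtained because $f\leq p$ gives $2p-e\leq p$, i.e.\ $p\leq e$). For (iii) $\Rightarrow$ (iv): I would suppose $e,f,e+f\in E$ with $e,f\leq p$; writing $p$ as a convex combination involving $e+f$ and using extremality to conclude $e+f\leq p$. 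The cleanest route is to set $g:=p-(e+f)$ if that lies in $E$, or to consider the midpoint $\tfrac12\bigl((e+f)+(2p-e-f)\bigr)=p$ and invoke extremality to pin down the summands. For (iv) $\Rightarrow$ (v): this is a purely order-theoretic step—taking $e=f$ in the additivity condition (iv), so $e\leq p$ and $e\leq p$ yield $2e\leq p$ whenever $2e\in E$, which I can then leverage to derive condition (v)'s orthogonality statement.

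The main obstacle I expect is the implication (v) $\Rightarrow$ (i), where I must manufacture idempotence $p=p^2$ out of the hypothesis that no nonzero effect lies below both $p$ and $1-p$. Here the natural candidate is the effect $e:=p-p^2$. By Lemma \ref{lm:eSquared}(iii), $e\in E$ and $e\leq p$ and $e\leq 1-p$, so $e$ is an effect dominated by both $p$ and its complement. Condition (v) then forces $e=0$, i.e.\ $p-p^2=0$, which is exactly $p=p^2$, so $p\in P$. The delicate part will be verifying the hypotheses of Lemma \ref{lm:eSquared}(iii) apply—namely that $p\in E$ is all we need, which it is by assumption—and confirming that the whole cycle closes without a gap, since (v) is stated for a general effect $e$ and I am instantiating it at the specific effect $p-p^2$.
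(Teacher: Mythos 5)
Your overall strategy---a single cycle (i)$\Rightarrow$(ii)$\Rightarrow$(iii)$\Rightarrow$(iv)$\Rightarrow$(v)$\Rightarrow$(i)---differs structurally from the paper, which proves two loops through (i): first (i)$\Rightarrow$(ii)$\Rightarrow$(iii)$\Rightarrow$(i), then (i)$\Rightarrow$(iv)$\Rightarrow$(v)$\Rightarrow$(i). Your endpoints are sound: (i)$\Rightarrow$(ii) and (v)$\Rightarrow$(i) are exactly the paper's arguments (the latter correctly instantiates (v) at $e:=p-p^{2}$ via Lemma \ref{lm:eSquared}\,(iii)), and (ii)$\Rightarrow$(iii) works modulo the fact that you only test midpoints; either invoke the standard equivalence of the midpoint criterion with extremality, or argue as the paper does for arbitrary $\lambda$: from $\lambda e\leq p=\lambda e+(1-\lambda)f$ and (ii) one gets $e\leq p$, hence $(1-\lambda)e\leq(1-\lambda)f$, so $e\leq f$, and symmetrically $f\leq e$.

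The genuine gap is (iii)$\Rightarrow$(iv), and neither of your two suggestions closes it. Setting $g:=p-(e+f)$ and hoping it lies in $E$ is circular: $0\leq g$ is precisely the conclusion $e+f\leq p$ you are trying to prove. The midpoint decomposition $p=\frac12\left((e+f)+(2p-e-f)\right)$ fails on two counts: $2p-e-f$ need not lie in $E$ (take $e=f=0$; then $2p-e-f=2p$, which need not be $\leq 1$), and even when both summands do lie in $E$, extremality would force $e+f=p$ exactly, which is false in general (again $e=f=0$ with $p\neq 0$). Condition (iv) really seems to require idempotence: the paper first proves (iii)$\Rightarrow$(i) by writing $p=\frac12 p^{2}+\frac12(2p-p^{2})$ with both summands in $E$ by Lemma \ref{lm:eSquared}\,(i),(ii), so extremality gives $p=p^{2}$; it then proves (i)$\Rightarrow$(iv) using Theorem \ref{th:e<=p} and [SA4]: $e=pep$, $f=pfp$, and $0\leq 1-(e+f)$ yields $0\leq p(1-e-f)p$, i.e., $e+f\leq p^{2}=p$. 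You should restructure your chain accordingly. Separately, your sketch of (iv)$\Rightarrow$(v) (take $e=f$ and ``leverage'' $2e\leq p$) is not closed as written; it can be completed by inducting to $ne\leq p\leq 1$ and invoking archimedeanity, but the paper's one-line version is cleaner: since $e\leq 1-p$ we have $e+p\in E$, and $e,p\leq p$, so (iv) gives $e+p\leq p$, whence $e\leq 0$ and therefore $e=0$.
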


\begin{proof}
(i) $\Rightarrow$ (ii). Suppose $p\in P$, $e\in E$, and 
$0<\lambda<1$. Then $0\leq\lambda e\leq e\leq 1$, so $\lambda e
\in E$. Therefore, by Theorem \ref{th:e<=p}, $\lambda e\leq p
\Leftrightarrow \lambda ep=\lambda e\Leftrightarrow ep=e
\Leftrightarrow e\leq p$.

\smallskip

(ii) $\Rightarrow$ (iii) Assume (ii) and suppose that $p=
\lambda e+(1-\lambda)f$ with $0<\lambda<1$ and $e,f\in E$. 
Then $\lambda e\leq p$, whence $e\leq p=\lambda e+
(1-\lambda)f$, therefore $(1-\lambda)e\leq(1-\lambda)f$, 
and it follows that $e\leq f$. Similarly, $f\leq e$, 
so $e=f=p$.

\smallskip

(iii) $\Rightarrow$ (i) Assume (iii). By parts (i) and (ii) 
of Lemma \ref{lm:eSquared}, $p\sp{2},\,2p-p\sp{2}\in E$, 
and since $p=\frac12p\sp{2}+\frac12(2p-p\sp{2})$, (iii) 
implies that $p=p\sp{2}=2p-p\sp{2}$, whence $p\in P$.

\smallskip

(i) $\Rightarrow$ (iv) Assume that $p\in P$, $e,f,e+f\in E$, 
and $e,f\leq p$. Then by Theorem \ref{th:e<=p}, $e=pep$ and 
$f=pfp$. As $e+f\in E$, we have $0\leq 1-(e+f)$, whence 
by [SA4], $0\leq p(1-e-f)p$, i.e., $e+f=pep+pfp\leq p\sp{2}=p$.

\smallskip

(iv) $\Rightarrow$ (v) Assume (iv) and suppose that 
$e\in E$ with $e\leq p,1-p$. Then $e,p\in E$, $0\leq e+p\leq 1$, 
and $e,p\leq p$, whence $e+p\leq p$ by (iv), and therefore 
$e\leq 0$. But $0\leq e$, so $e=0$.

\smallskip

(v) $\Rightarrow$ (i) Assume (v). By Lemma \ref{lm:eSquared} (iii), 
$0\leq p-p\sp{2}\leq p,1-p$, whence $p=p\sp{2}$ by (v).
\end{proof}

\begin{theorem} \label{lm:UniqueCarrier}
Let $a\in A$. Then there exists a unique projection $p\in P$ such 
that, for all $b\in A$, $ab=0\Leftrightarrow pb=0$. 
\end{theorem}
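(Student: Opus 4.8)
The plan is to read off existence directly from the axioms and then to settle uniqueness by a short order-theoretic argument resting on Theorem \ref{th:e<=p}. Existence of a projection $p\in P$ with the property that, for all $b\in A$, $ab=0\Leftrightarrow pb=0$ is precisely the assertion of [SA7], so there is nothing to prove on that front; the entire content of the theorem lies in the uniqueness clause.

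For uniqueness, suppose that $p,q\in P$ both enjoy the stated property with respect to the fixed element $a$. Then for every $b\in A$ we have $pb=0\Leftrightarrow ab=0\Leftrightarrow qb=0$. In other words, the two conditions $pb=0$ and $qb=0$ are equivalent for every $b$, and $a$ has dropped out entirely. The remaining task is to convert this purely projection-theoretic equivalence into the equality $p=q$.

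First I would substitute $b:=1-p$ into the equivalence. Since $p(1-p)=p-p\sp{2}=0$, we get $q(1-p)=0$, that is, $q=qp$. Now $q\in P\subseteq E$ by Remarks \ref{rm:PsubsetofE}, so the relation $q=qp$ is exactly condition (iv) of Theorem \ref{th:e<=p}, applied with the effect $q$ and the projection $p$; that theorem then yields $q\leq p$. Interchanging the roles of $p$ and $q$ and substituting $b:=1-q$ gives $p=pq$ by the identical reasoning, hence $p\leq q$. Since $\leq$ is a partial order on $A$, antisymmetry forces $p=q$.

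There is essentially no obstacle here: the substantive work has already been absorbed into [SA7] for existence and into Theorem \ref{th:e<=p} for the passage from $e=ep$ to $e\leq p$. The only thing one needs to notice is that the two defining properties combine to eliminate $a$, leaving an equivalence purely between $p$ and $q$; the symmetric test elements $b=1-p$ and $b=1-q$ then do all of the work.
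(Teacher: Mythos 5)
Your proof is correct and follows essentially the same route as the paper: existence from [SA7], then uniqueness by testing with $b=1-p$ to obtain $q=qp$, invoking Theorem \ref{th:e<=p} to get $q\leq p$, and concluding by symmetry and antisymmetry. No issues.
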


\begin{proof}
By [SA7], there exists $p\in P$ such that, for all $b\in A$, 
$ab=0\Leftrightarrow pb=0$. Suppose $q\in P$ and, for all $b\in A$, 
$ab=0\Leftrightarrow qb=0$. Putting $b=1-p$, we find that $a(1-p)
=0$, whence $q(1-p)=0$, i.e., $q=qp$, and therefore $q\leq p$ by 
Theorem \ref{th:e<=p}. By symmetry, $p\leq q$, so $p=q$, proving 
the uniqueness of $p$.
\end{proof}

\begin{definition} \label{df:carrier}
If $a\in A$, then the unique projection $p$ in Theorem 
\ref{lm:UniqueCarrier} is called the \emph{carrier projection} 
of (or for) $a$ and is denoted by $a\dg$. Thus, $a\dg\in P$ and, 
for all $b\in A$, $ab=0\Leftrightarrow a\dg b=0$.
\end{definition}

\begin{lemma} \label{lm:ab=0}
Let $a,b\in A$ and $p\in P$. Then: {\rm (i)} $pb=0\Leftrightarrow bp=
0$. {\rm (ii)} $pa=a\Leftrightarrow ap=a$. {\rm (iii)} $aa\dg=a\dg a=
a$. {\rm (iv)} $ab=0\Leftrightarrow ba=0$.
\end{lemma}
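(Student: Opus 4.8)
The plan is to prove (i) first and then bootstrap the remaining parts from it, using the defining property of the carrier (Definition \ref{df:carrier}) and the fact (Remarks \ref{rm:NoNilpotents}(i)) that $A$ contains no nonzero element of square zero. For (i), suppose $pb=0$; I want $bp=0$. Although $bp$ need not obviously lie in $A$, the Jordan product $b\circ p=\frac12(bp+pb)$ always does, and since $pb=0$ this forces $bp=2(b\circ p)\in A$. Now $(bp)\sp2=b(pb)p=0$, so Remarks \ref{rm:NoNilpotents}(i) gives $bp=0$. The reverse implication follows by interchanging the two factors: from $bp=0$ one gets $pb=2(b\circ p)\in A$ and $(pb)\sp2=p(bp)b=0$, whence $pb=0$. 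The one genuinely nonobvious move is recognizing that the offending product lands back in $A$ via the Jordan identity; this is what licenses the no-square-zero argument, and I expect it to be the crux of the whole lemma.

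Part (ii) is then immediate: $pa=a$ is equivalent to $(1-p)a=0$, and since $1-p\in P$, applying (i) to the projection $1-p$ and the element $a$ turns this into $a(1-p)=0$, i.e. $ap=a$. For (iii), I invoke the defining property of $a\dg$. Since $a\dg\in P$ we have $a\dg(1-a\dg)=a\dg-(a\dg)\sp2=0$; taking $b:=1-a\dg$ in the carrier equivalence $a\dg b=0\Leftrightarrow ab=0$ yields $a(1-a\dg)=0$, that is, $aa\dg=a$. Applying (i) with the projection $1-a\dg$ then converts $a(1-a\dg)=0$ into $(1-a\dg)a=0$, i.e. $a\dg a=a$.

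Finally, for (iv), suppose $ab=0$. The carrier equivalence gives $a\dg b=0$, part (i) (with $p=a\dg$) gives $ba\dg=0$, and part (iii) gives $a=a\dg a$; hence $ba=(ba\dg)a=0$. The converse follows by the symmetry of the hypothesis in $a$ and $b$. All the products manipulated in (iv) are computed in $R$ and are only being shown to vanish, so no further membership check in $A$ is needed.
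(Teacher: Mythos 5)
Your proof is correct and follows essentially the same route as the paper: prove (i) first, then bootstrap (ii)--(iv) from it exactly as the paper does. The only variation is in (i), where the paper applies [SA5] directly to $bpb=0$, while you first note $bp=2(b\circ p)\in A$ and then use the no-square-zero consequence of [SA5]; both are one-step applications of the same axiom.
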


\begin{proof}
By [SA5] and the fact that $0\leq p$, we have $pb=0\Rightarrow bpb=0
\Rightarrow bp=0$, whence $pb=0\Rightarrow bp=0$. A similar argument 
yields the converse, proving (i). By (i), $pa=a\Leftrightarrow(1-p)a
=0\Leftrightarrow a(1-p)=0\Leftrightarrow ap=a$, proving (ii). As $a
\dg\in P$, we have $a\dg(1-a\dg)=0$, so $a(1-a\dg)=0$, i.e., $aa\dg=
a$, whence $a\dg a=a$ by (ii), proving (iii). To prove (iv), assume 
that $ab=0$. Then $a\dg b=0$, so $ba\dg=0$ by (i). Also, $a=a\dg a$ 
by (iii), whereupon $ba=ba\dg a=0$. Thus, $ab=0\Rightarrow ba=0$, 
and the converse follows by symmetry.
\end{proof}

\begin{theorem} \label{th:Carrier}
Let $a,b\in A$. Then: {\rm (i)} $a=0\Leftrightarrow a\dg=0$. 
{\rm (ii)} $a\in P\Leftrightarrow a=a\dg$. {\rm (iii)} $a\dg$ is 
the smallest projection $p\in P$ such that $a=ap$. {\rm (iv)} If 
$e\in E$, then $e\dg$ is the smallest projection $p\in P$ such that 
$e\leq p$. {\rm (v)} $ab=0\Leftrightarrow ab\dg=0\Leftrightarrow a
\dg b\dg=0$. {\rm (vi)} $a\dg\in CC(a)$. {\rm (vii)} 
If $n\in\Nat$, then $(a\sp{n})\dg=a\dg$.  {\rm (viii)} If 
$0\leq a\leq b$, then $a\dg\leq b\dg$. 
\end{theorem}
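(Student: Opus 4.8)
The plan is to establish each of the eight parts in turn, leaning heavily on Lemma \ref{lm:ab=0} and the defining property of the carrier projection $a\dg$ (namely $ab=0\Leftrightarrow a\dg b=0$ for all $b$), together with the characterizations of the relation $e\le p$ in Theorem \ref{th:e<=p}. Most parts follow by choosing a clever test element $b$ in the carrier definition.

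First I would prove the ``easy'' parts. For (i), $a=0$ means $ab=0$ for all $b$ (taking $b=1$), which is equivalent to $a\dg b=0$ for all $b$, hence to $a\dg=0$; one direction uses $aa\dg=a$ from Lemma \ref{lm:ab=0}(iii). For (ii), if $a\in P$ then $a$ itself satisfies the carrier-defining property (since $a=a^2$ gives $ab=0\Leftrightarrow ab=0$ routinely via Lemma \ref{lm:ab=0}), so uniqueness (Theorem \ref{lm:UniqueCarrier}) forces $a=a\dg$; conversely $a=a\dg\in P$. For (iii), $a=aa\dg$ is Lemma \ref{lm:ab=0}(iii), and if $a=ap$ for some $p\in P$ then $a(1-p)=0$, so $a\dg(1-p)=0$, i.e.\ $a\dg=a\dg p$, giving $a\dg\le p$ by Theorem \ref{th:e<=p}; thus $a\dg$ is smallest. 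Part (iv) specializes (iii) to effects, but I must bridge ``$e=ep$'' and ``$e\le p$''; for $e\in E$ and $p\in P$ these are equivalent by Theorem \ref{th:e<=p}, so $e\dg$ is the least $p$ with $e\le p$. Part (v) chains Lemma \ref{lm:ab=0} and the carrier property: $ab=0\Leftrightarrow b^{\flat}$-type manipulations, concretely $ab=0\Leftrightarrow ba=0\Leftrightarrow b\dg a=0\Leftrightarrow ab\dg=0$, and then applying the carrier property in the first slot gives $a\dg b\dg=0$; I would write this as a short equivalence chain using parts (iii) and (iv) of Lemma \ref{lm:ab=0}.

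The part I expect to be the main obstacle is (vi), $a\dg\in CC(a)$, since it asserts a commutation fact rather than a vanishing-product fact, so the carrier-definition trick does not apply directly. The plan is to take any $c\in C(a)$ and show $a\dg c=ca\dg$. The idea is that $c$ commutes with $a$, hence (by Remarks \ref{rm:Jordan}, since $A$ is closed under polynomials and $c$ commutes with $a$) $c$ should commute with enough of $a$ to fix $a\dg$; concretely, I would argue that $ca\dg c\dg$-type products vanish by testing $a$ against $c(1-a\dg)$ or $(1-a\dg)c$. More carefully: from $ac=ca$ I would show $a\cdot c(1-a\dg)=c\cdot a(1-a\dg)=0$ using $a(1-a\dg)=0$ (part (iii)), hence $a\dg\cdot c(1-a\dg)=0$ by the carrier property, which rearranges to $a\dg c=a\dg c a\dg$; symmetrically $c a\dg=a\dg c a\dg$, and comparing gives $a\dg c=ca\dg$. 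This is the delicate computation and where I would spend the most care.

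Finally, (vii) and (viii) should follow from the machinery already assembled. For (vii), since $a$ and $a^n$ generate the same products against test elements---$ab=0\Rightarrow a^n b=0$ trivially, and conversely $a^n b=0$ should force $ab=0$ via the no-nilpotents principle of Remarks \ref{rm:NoNilpotents} applied after noting $a\in CC(a)$ and using part (v) to reduce to carriers---one gets $(a^n)\dg=a\dg$; the clean route is $(a^n)\dg b=0\Leftrightarrow a^n b=0\Leftrightarrow a b=0\Leftrightarrow a\dg b=0$, so uniqueness finishes. For (viii), if $0\le a\le b$, I would show $b(1-b\dg)=0$ implies $a(1-b\dg)=0$: from $0\le a\le b$ and $b=bb\dg$ one sandwiches $(1-b\dg)a(1-b\dg)$ between $0$ and $(1-b\dg)b(1-b\dg)=0$, so $(1-b\dg)a(1-b\dg)=0$, whence $(1-b\dg)a=0$ by [SA5], i.e.\ $a=b\dg a$, so $a\dg\le b\dg$ by part (iii). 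These sandwich-and-[SA5] arguments mirror the proof of Theorem \ref{th:e<=p} and should go through smoothly.
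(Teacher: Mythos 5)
Parts (i)--(v) of your proposal match the paper's proof, and for (viii) your sandwich argument ($0\le(1-b\dg)a(1-b\dg)\le(1-b\dg)b(1-b\dg)=0$, then [SA5] and part (iii)) is a legitimate alternative to the paper's route, which instead normalizes $a,b$ to effects and invokes part (iv). The genuine gap is in (vi). You propose to apply the carrier property to the test element $c(1-a\dg)$, concluding $a\dg c(1-a\dg)=0$ from $ac(1-a\dg)=0$. But the defining property of $a\dg$ coming from [SA7] reads ``for all $b\in A$, $ab=0\Leftrightarrow a\dg b=0$,'' and $c(1-a\dg)$ is a product of two elements of $A$ which, at this stage, you do not know to commute, so it need not lie in $A$ at all: $A$ is only a special Jordan subalgebra of $R$, and in the motivating example a product of two non-commuting Hermitian operators is not Hermitian. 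The same objection blocks your ``symmetrically'' step; worse, the natural symmetric test element $(1-a\dg)c$ only yields the vacuous identity $a\dg(1-a\dg)c=0$, so it is not clear how you would obtain $ca\dg=a\dg ca\dg$ even granting membership in $A$. The paper's device is precisely to symmetrize: it tests $a$ against $d:=(1-a\dg)ca\dg+a\dg c(1-a\dg)$, which \emph{does} lie in $A$ by Remarks \ref{rm:Jordan}, computes $ad=0$, hence $a\dg d=0$, and then reads off both $a\dg c=a\dg ca\dg$ (from $a\dg d=0$) and $ca\dg=a\dg ca\dg$ (from $da\dg=0$, using Lemma \ref{lm:ab=0} (i) to move $a\dg$ to the other side). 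Without some such symmetrization your argument for (vi) does not go through.

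A secondary weakness is (vii), which you only gesture at: the implication $a^nb=0\Rightarrow ab=0$ does not follow from Remarks \ref{rm:NoNilpotents} by mere ``noting,'' and the remark that $a\in CC(a)$ is not relevant. A genuine descent is needed. Your idea can be completed: reduce to $a^np=0$ with $p:=b\dg\in P$ via (v); if $n=2m$ then $(a^mpa^m)^2=a^m(pa^{2m}p)a^m=0$, so $a^mpa^m=0$ by Remarks \ref{rm:NoNilpotents} (i) and then $a^mp=0$ by [SA5]; if $n$ is odd, pass to $a^{n+1}p=0$ first; iterate down to $ap=0$. The paper instead first invokes (vi) to get $a$ commuting with $1-(a^n)\dg$ and runs a minimal-exponent argument with [SA5]. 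Your commutativity-free descent is fine once actually written out, but as it stands the key step is asserted rather than proved.
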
 

\begin{proof}
(i) and (ii) are obvious from the definition of $a\dg$.

\smallskip

(iii) We have $aa\dg=a$ by Lemma \ref{lm:ab=0} (iii). Suppose that 
$p\in P$ and $a=ap$. Then $a(1-p)=0$, whence $a\dg(1-p)=0$, so 
$a\dg=a\dg p$, and therefore $a\dg\leq p$ by Theorem \ref{th:e<=p}.

\smallskip

(iv) Part (iv) is a consequence of (iii) and Theorem \ref{th:e<=p}.  

\smallskip

(v) By Lemma \ref{lm:ab=0} (iv), 
\[
ab=0\Leftrightarrow ba=0\Leftrightarrow b\dg a=0\Leftrightarrow ab
 \dg=0\Leftrightarrow a\dg b\dg=0.
\] 

(vi) Suppose that $c\in C(a)$ and let $d :=(1-a\dg)ca
\dg+a\dg c(1-a\dg)$. Thus, $d\in A$ (see Remarks \ref{rm:Jordan}), 
and as $aa\dg=a$, we have 
\[ 
ad=a(1-a\dg)ca\dg+aa\dg c(1-a\dg)=0+ac(1-a\dg)=ca(1-a\dg)=0,
\]
and therefore
\[
0=a\dg d=0+a\dg c(1-a\dg)=a\dg c-a\dg ca\dg,\text{\  i.e.,\ }
 a\dg c=a\dg ca\dg.
\] 
Also, as $a\dg d=0$, Lemma \ref{lm:ab=0} implies that 
$0=da\dg=(1-a\dg)ca\dg$, i.e., $ca\dg=a\dg ca\dg$. Therefore 
$ca\dg=a\dg ca\dg=a\dg c$, so $c\in C(a\dg)$. 

\smallskip

(vii) Let $n\in\Nat$. As $aa\dg=a$, we have $a\sp{n}a\dg=
a\sp{n}$, whence $(a\sp{n})\dg\leq a\dg$ by (iii). We have to prove 
that $a\dg\leq(a\sp{n})\dg$. Put $q :=1-(a\sp{n})\dg$. By (vi), 
$C(a\sp{n})\subseteq C((a\sp{n})\dg)$, whence $aCq$. Evidently,
$a\sp{n}q=0$, so there is a smallest positive integer $k$ such 
that $a\sp{k}q=0$. If $k$ is even, then $a\sp{k/2}qa\sp{k/2}=0$, 
so $a\sp{k/2}q=0$ by [SA5], contradicting the minimality 
of $k$. Therefore, $k$ is odd and $a\sp{k+1}q=0$, whence 
$a\sp{(k+1)/2}qa\sp{(k+1)/2}=0$, so $a\sp{(k+1)/2}q=0$ by 
[SA5] again, whereupon $k\leq(k+1)/2$, i.e., $k=1$. Therefore, 
$aq=0$, whence $a=a(a\sp{n})\dg$, and again by (iii), 
$a\dg\leq(a\sp{n})\dg$.

\smallskip

(viii) Suppose that $0\leq a\leq b$. The case $b=0$ is 
trivial, so we assume that $b\not=0$. Let $\lambda :=
\|b\|\sp{-1}$, $e :=\lambda a$, and $f :=\lambda b$. Clearly, 
$e,f\in E$, $e\leq f$, $e\dg=a\dg$, and $f\dg=b\dg$. By (iv), 
$e\leq f\leq f\dg\in P$, whence $e\dg\leq f\dg$, i.e., $a\dg
\leq b\dg$.
\end{proof}

\section{Absolute Value and Polar Decomposition} 
\label{sc:AVandPD} 

If $a\in A$, then by [SA3], $0\leq a\sp{2}$, so we can 
formulate the following definition.

\begin{definition}
If $a\in A$, then the \emph{absolute value} of $a$ is defined 
and denoted by $|a| :=(a\sp{2})\sp{1/2}$. Also we define 
$a\sp{+} :=\frac12(|a|+a)$ and $a\sp{-} :=\frac12(|a|-a)$. 
\end{definition}

\begin{remarks} \label{rm:AbsProps}
Let $a\in A$. Obviously, $0\leq|a|=|-a|$ and $|a|\sp{2}=a\sp{2}$. 
Also, $C(a)\subseteq C(a\sp{2})\subseteq C(|a|)$, and therefore 
$|a|,a\sp{+},a\sp{-}\in CC(a)$. Moreover, $a=a\sp{+}-a\sp{-}$, 
$|a|=a\sp{+}+a\sp{-}$, $a\sp{+}a\sp{-}=a\sp{-}a\sp{+}=0$, and 
$a\sp{-}=(-a)\sp{+}$. 
\end{remarks}

\begin{theorem} \label{th:Prop a+a-}
Let $a\in G$, $p :=(a\sp{+})\dg$, and $q :=(a\sp{-})\dg$.  
Then:

\smallskip

\begin{tabular}{ll}
\ \ {\rm (i)}\ $p,q\in CC(a)$ &
\ \ \ \ \ \ {\rm (ii)} $pC|a|$ and $qC|a|$\\
\,{\rm (iii)}\ $pa=ap=a\sp{+}$. &
\ \ \ \ \ \,{\rm (iv)}\ $qa=aq=-a\sp{-}$.\\
\ \,{\rm (v)}\ $0\leq p|a|=|a|p=a\sp{+}$. & \ \ \ \ \ 
{\rm (vi)}\ $0\leq q|a|=|a|q=a\sp{-}$.\\
{\rm (vii)} $pq=qp=0$. & \ \ \ \,{\rm (viii)} $p+q=a\dg$.
\end{tabular}
\end{theorem}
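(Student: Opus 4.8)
The plan is to treat the eight parts roughly in the order listed, extracting everything from the orthogonality relation $a\sp{+}a\sp{-}=a\sp{-}a\sp{+}=0$, the membership $a\sp{+},a\sp{-},|a|\in CC(a)$, and the carrier identities of Lemma \ref{lm:ab=0} and Theorem \ref{th:Carrier}. For (i), the observation I would record first is that whenever $x\in CC(a)$ one has $x\dg\in CC(a)$: indeed $x\in CC(a)$ forces $C(a)\subseteq C(x)$, while Theorem \ref{th:Carrier}(vi) gives $x\dg\in CC(x)$, i.e. $x\dg$ commutes with everything in $C(x)$ and hence with everything in the smaller set $C(a)$. Applying this to $x=a\sp{+}$ and to $x=a\sp{-}$ yields $p,q\in CC(a)$. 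For (ii) I would note that $|a|\in CC(a)$ commutes with $a\in C(a)$, hence with $a\sp{+}=\frac12(|a|+a)$ and $a\sp{-}=\frac12(|a|-a)$; thus $|a|\in C(a\sp{+})\cap C(a\sp{-})$, and since $p=(a\sp{+})\dg\in CC(a\sp{+})$ and $q=(a\sp{-})\dg\in CC(a\sp{-})$, both $p$ and $q$ commute with $|a|$.

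For parts (iii)--(vii) the engine is the pair of carrier identities $a\sp{+}p=pa\sp{+}=a\sp{+}$ and $a\sp{-}q=qa\sp{-}=a\sp{-}$ (Lemma \ref{lm:ab=0}(iii)) together with $pa\sp{-}=(a\sp{+})\dg a\sp{-}=0$ and $qa\sp{+}=0$, which come from $a\sp{+}a\sp{-}=0$ and the defining kernel property of the carrier. Expanding $a=a\sp{+}-a\sp{-}$ gives $pa=a\sp{+}$ and $qa=-a\sp{-}$, and because $p,q\in CC(a)$ commute with $a$ these also equal $ap$ and $aq$, establishing (iii) and (iv). Expanding $|a|=a\sp{+}+a\sp{-}$ gives $p|a|=a\sp{+}$ and $q|a|=a\sp{-}$, equal to $|a|p$ and $|a|q$ by (ii); the inequalities $0\leq a\sp{+}$ and $0\leq a\sp{-}$ then follow from [SA2] applied to the commuting nonnegative pair $p,|a|$ (respectively $q,|a|$), completing (v) and (vi). Part (vii) is immediate: $a\sp{+}a\sp{-}=0$ and Theorem \ref{th:Carrier}(v) give $(a\sp{+})\dg(a\sp{-})\dg=pq=0$, and $qp=0$ by Lemma \ref{lm:ab=0}.

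Part (viii), $p+q=a\dg$, is where the real work lies, and I expect it to be the main obstacle. First, (vii) shows $(p+q)\sp{2}=p+q$, so $p+q\in P$. Using (iii) and (iv), $a(p+q)=ap+aq=a\sp{+}-a\sp{-}=a$, so Theorem \ref{th:Carrier}(iii) yields $a\dg\leq p+q$. For the reverse inequality I would use $0\leq a\sp{+},a\sp{-}\leq|a|$ (each summand of $|a|=a\sp{+}+a\sp{-}$ is nonnegative), so the monotonicity of carriers, Theorem \ref{th:Carrier}(viii), gives $p\leq|a|\dg$ and $q\leq|a|\dg$. The identity $|a|\sp{2}=a\sp{2}$ combined with Theorem \ref{th:Carrier}(vii) then collapses $|a|\dg=(|a|\sp{2})\dg=(a\sp{2})\dg=a\dg$, so $p,q\leq a\dg$. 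Finally, $p\leq a\dg$ and $q\leq a\dg$ give $p=pa\dg$ and $q=qa\dg$ by Theorem \ref{th:e<=p}, whence $(p+q)a\dg=p+q$ and therefore $p+q\leq a\dg$ by Theorem \ref{th:e<=p} again; together with $a\dg\leq p+q$ this gives $p+q=a\dg$. The one place demanding care is the chain $|a|\dg=a\dg$, which silently uses both that the carrier of a power equals the carrier and that $|a|$ and $a$ have the same square.
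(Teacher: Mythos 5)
Your proof is correct. Parts (i)--(vii) follow essentially the same route as the paper: (i) via the inclusions $C(a)\subseteq C(a\sp{+})\subseteq C((a\sp{+})\dg)$, (iii)--(vi) by expanding $a=a\sp{+}-a\sp{-}$ and $|a|=a\sp{+}+a\sp{-}$ against the carrier identities $pa\sp{+}=a\sp{+}$, $pa\sp{-}=0$, and (v)--(vi) by [SA2]. The only genuine divergence is in (viii), and only for the inequality $p+q\leq a\dg$ (the easy direction $a\dg\leq p+q$ is handled identically in both proofs via $a(p+q)=a$ and Theorem \ref{th:Carrier}(iii)). The paper sets $r:=1-a\dg$, derives $a\sp{+}r=a\sp{-}r$ from $ar=0$, and then computes $a\sp{+}r=pa\sp{+}r=pa\sp{-}r=pqa\sp{-}r=0$ to conclude $pr=qr=0$; this is a short, self-contained algebraic manipulation using only the orthogonality $pq=0$. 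You instead observe $0\leq a\sp{+},a\sp{-}\leq|a|$ (legitimate once (v) and (vi) are in hand), invoke carrier monotonicity (Theorem \ref{th:Carrier}(viii)) to get $p,q\leq|a|\dg$, and collapse $|a|\dg=(|a|\sp{2})\dg=(a\sp{2})\dg=a\dg$ via Theorem \ref{th:Carrier}(vii); you correctly avoid citing Corollary \ref{co:StrongAxiii}(ii) (which would be circular, as it depends on this theorem) and rederive $|a|\dg=a\dg$ from the carrier-of-powers result, so there is no circularity. Your version buys a more conceptual statement-level argument at the cost of leaning on the two heavier clauses (vii) and (viii) of Theorem \ref{th:Carrier}, whereas the paper's computation is more elementary and local. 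Both the passage from $p,q\leq a\dg$ to $p+q\leq a\dg$ via $(p+q)a\dg=p+q$ and Theorem \ref{th:e<=p}, and the rest of your reasoning, are sound.
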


\begin{proof}
(i) As $C(a)\subseteq C(a\sp{+})$ and $C(a\sp{+})\subseteq 
C((a\sp{+})\dg)$, we have $C(a)\subseteq C(p)$. Likewise, as 
$a\sp{-}=(-a)\sp{+}$ and $C(a)=C(-a)$, we have $C(a)\subseteq 
C(q)$. 

\smallskip

(ii) As $|a|\in C(a)$, (ii) follows from (i).

\smallskip

(iii) By (i), $pa=ap$. Also, $a\sp{+}=(a\sp{+})\dg a\sp{+}=
pa\sp{+}$, and since $a\sp{+}a\sp{-}=0$, it follows that $pa\sp{-}=0$, 
whence $pa=p(a\sp{+}-a\sp{-})=a\sp{+}$.

\smallskip

(iv) By (iii), $-qa=q(-a)=(-a)\sp{+}=a\sp{-}$.

\smallskip

(v) By (ii), $pC|a|$, and as in the proof of (iii), 
$p|a|=p(a\sp{+}+a\sp{-})=a\sp{+}$. As $0\leq p$ and $0\leq
|a|$, [SA2] implies that $0\leq p|a|$. 

\smallskip

(vi) The proof of (vi) is similar to the proof of (v).

\smallskip

(vii) As $a\sp{+}a\sp{-}=0$, we have $pa\sp{-}=0$, 
whence $pq=0$.

\smallskip

(viii) By (vii), $(p+q)\sp{2}=p\sp{2}+q\sp{2}=p+q$, 
so $p+q\in P$. By (iii) and (iv), $a(p+q)=a\sp{+}-a\sp{-}=a$, 
whence $a\dg\leq p+q$ by Theorem \ref{th:Carrier} (iii). Let 
$r :=1-a\dg$. Then $r\in P$ and  $0=ar=a\sp{+}r-a\sp{-}r$, 
i.e., $a\sp{+}r=a\sp{-}r$. Consequently, $a\sp{+}r=pa\sp{+}r=
pa\sp{-}r=pqa\sp{-}r=0$ by (vii), and it follows that $pr=0$. 
Likewise, $qr=0$, so $(p+q)r=0$, and therefore $(p+q)a\dg=
(p+q)(1-r)=p+q$, i.e., $p+q\leq a\dg$ by Theorem \ref{th:e<=p}; 
hence $p+q=a\dg$.
\end{proof}

\begin{corollary} \label{co:monotoneSR}
If $0\leq a,b\in A$ and $aCb$, then $a\sp{2}\leq b\sp{2}
\Leftrightarrow a\leq b$.
\end{corollary}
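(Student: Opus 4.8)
The plan is to establish the two implications separately. The forward implication $a\leq b\Rightarrow a\sp{2}\leq b\sp{2}$ is routine: since $aCb$ we have $b\sp{2}-a\sp{2}=(b+a)(b-a)=(b-a)(b+a)$, so $b+a$ and $b-a$ commute; assuming $a\leq b$, both $b+a$ and $b-a$ lie in $A\sp{+}$, and [SA2] gives $0\leq(b+a)(b-a)=b\sp{2}-a\sp{2}$. All the work is in the reverse implication $a\sp{2}\leq b\sp{2}\Rightarrow a\leq b$.

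For that direction I would put $c:=b-a$ and $s:=b+a$, so that $0\leq s$, $cCs$, and $cs=sc=b\sp{2}-a\sp{2}\geq 0$ by hypothesis. The goal is to show that the negative part $c\sp{-}$ vanishes, since then $c=c\sp{+}\in A\sp{+}$, i.e.\ $a\leq b$. First I would record the commutation bookkeeping: by Remarks~\ref{rm:AbsProps}, $c\sp{-}\in CC(c)$, and since $aCb$ forces $a,b\in C(c)$, the element $c\sp{-}$ commutes with $a$, $b$, $s$, and hence with $cs$. Because $c\sp{-}c=c\sp{-}(c\sp{+}-c\sp{-})=-(c\sp{-})\sp{2}$, applying [SA2] to the commuting nonnegative pair $c\sp{-},cs$ yields $0\leq c\sp{-}(cs)=-(c\sp{-})\sp{2}s$; but $(c\sp{-})\sp{2}s\geq 0$ by [SA2] as well, so $(c\sp{-})\sp{2}s=0$.

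To finish I would split $0=(c\sp{-})\sp{2}s=(c\sp{-})\sp{2}a+(c\sp{-})\sp{2}b$ into two summands, each nonnegative by [SA2], hence each zero by Remarks~\ref{rm:NoNilpotents}(ii). Subtracting gives $(c\sp{-})\sp{2}(b-a)=(c\sp{-})\sp{2}c=0$; since $(c\sp{-})\sp{2}c\sp{+}=0$ (as $c\sp{+}c\sp{-}=0$), this reads $(c\sp{-})\sp{3}=0$. Then $((c\sp{-})\sp{2})\sp{2}=c\sp{-}\cdot(c\sp{-})\sp{3}=0$, so two applications of Remarks~\ref{rm:NoNilpotents}(i) give first $(c\sp{-})\sp{2}=0$ and then $c\sp{-}=0$, as desired.

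I expect the reverse implication to be the main obstacle, and within it the delicate point is the step $cs\geq 0\Rightarrow c\sp{-}=0$: one must track carefully which elements commute so that [SA2] applies at every stage, and then convert the nilpotency $(c\sp{-})\sp{3}=0$ into $c\sp{-}=0$ by exploiting the absence of nonzero square-zero elements (Remarks~\ref{rm:NoNilpotents}(i)). Commutativity of $a$ and $b$ is essential throughout---without it the factorization $b\sp{2}-a\sp{2}=(b+a)(b-a)$ and the monotonicity itself both fail.
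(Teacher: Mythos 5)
Your proof is correct, and in the harder direction it takes a genuinely different route from the paper's. Both arguments begin the same way: take the positive and negative parts of $c:=b-a$ and exploit positivity of a product of the form $(b-a)\cdot(\text{something nonnegative commuting with it})$ via [SA2]. The paper works with $0\leq(b-a)b$, shows $d\sp{2}b=0$ for $d:=(b-a)\sp{-}$, and then finishes through the carrier calculus: $(d\sp{2})\dg=d\dg$ gives $db=0$, hence $db\dg=0$, while $a\dg=(a\sp{2})\dg\leq(b\sp{2})\dg=b\dg$ gives $ab\dg=a$, so that $0\leq cb\dg=(b-a)b\dg=b-a$. You instead work with $0\leq(b-a)(b+a)=cs$, deduce $(c\sp{-})\sp{2}s=0$, split this by positivity into $(c\sp{-})\sp{2}a=(c\sp{-})\sp{2}b=0$ using Remarks \ref{rm:NoNilpotents}(ii), subtract to get $(c\sp{-})\sp{3}=0$, and then kill the nilpotent with two applications of Remarks \ref{rm:NoNilpotents}(i) via $(c\sp{-})\sp{4}=0$. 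Your endgame is more elementary and self-contained --- it needs only [SA2], [SA3], Remarks \ref{rm:NoNilpotents}, and Remarks \ref{rm:AbsProps}, and avoids Theorem \ref{th:Carrier} entirely --- whereas the paper's carrier argument is shorter once that machinery is in hand and lands directly on the clean identity $b-a=cb\dg\geq 0$. All the commutation checks you flag ($c\sp{-}\in CC(c)$ commuting with $a$, $b$, $s$, $cs$) do go through, so every invocation of [SA2] is legitimate.
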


\begin{proof}
Assume the hypotheses and suppose that $a\sp{2}\leq b\sp{2}$. As 
$0\leq(b-a)\sp{2}$, we have 
\setcounter{equation}{0} 
\begin{equation} \label{eq:a2b201}
0\leq(b-a)\sp{2}+b\sp{2}-a\sp{2}=2(b\sp{2}-ab),\text{\ whence\ } 
 0\leq(b-a)b.
\end{equation}
Also, by parts (vii), (viii), and (iii) of Theorem \ref{th:Carrier}, 
\begin{equation} \label{eq:a2b202} 
a\dg=(a\sp{2})\dg\leq(b\sp{2})\dg=b\dg,\text{\ whence\ \ }ab\dg=a. 
\end{equation}
Let $c :=(b-a)\sp{+}$ and $d :=(b-a)\sp{-}$. Then by Remarks 
\ref{rm:AbsProps} and parts (v) and (vi) of Theorem 
\ref{th:Prop a+a-}, $b\in C(b-a)\subseteq C(c)\cap C(d)$, and
we have
\begin{equation} \label{eq:a2b203}  
bCc,\ bCd,\ cCd,\ 0\leq c,\ 0\leq d,\ dc=0,\text{\ and\ }b-a=c-d.
\end{equation}
By (\ref{eq:a2b201}) and (\ref{eq:a2b203}),  
\begin{equation} \label{eq:a2b204}
0\leq(b-a)b=(c-d)b=cb-db.
\end{equation}
Since $dC(cb-db)$ and $0\leq d$, it follows from (\ref{eq:a2b204}),  
(\ref{eq:a2b203}), and [SA2] that $0\leq d(cb-db)=-d\sp{2}b$, i.e., 
$d\sp{2}b\leq 0$. Likewise, as $0\leq d\sp{2}$, $0\leq b$, and $bCd\sp{2}$, 
we also have $0\leq d\sp{2}b$; hence $d\sp{2}b=0$, and consequently 
\begin{equation} \label{eq:a2b205}
d\dg b=(d\sp{2})\dg b=0,\text{\ so\ }db=0,\text{\ whence\ }db\dg=0.
\end{equation}
As $c\in C(b)\subseteq C(b\dg)$, $0\leq c$, and $0\leq b\dg$, 
[SA2] implies that $0\leq cb\dg$, whence by (\ref{eq:a2b205}), 
(\ref{eq:a2b203}), and  (\ref{eq:a2b202}),  
\[
0\leq cb\dg=(c-d)b\dg=(b-a)b\dg=bb\dg-ab\dg=b-a. 
\]
Conversely, suppose that $a\leq b$, i.e., $0\leq b-a$. 
As $aCb$, we have $aC(b-a)$, and it follows from [SA2] 
that $0\leq a(b-a)=ab-a\sp{2}$, i.e., $a\sp{2}\leq ab$. 
Similarly, $0\leq (b-a)b=b\sp{2}-ab$, whence $ab\leq b\sp{2}$, 
and it follows that $a\sp{2}\leq b\sp{2}$. 
\end{proof}

\begin{definition}
If $a\in A$, then the \emph{signum} of $a$ is defined and denoted 
by $\sgn(a) :=(a\sp{+})\dg-(a\sp{-})\dg$. 
\end{definition}

\begin{theorem} \label{th:PolarDecomposition}
Let $a\in A$. Then: {\rm (i)} $\sgn(a)\in CC(a)$. {\rm (ii)} 
$\sgn(a)\sp{2}=a\dg$. {\rm (iii)} $\sgn(a)a=a\sgn(a)=|a|$. 
{\rm (iv)} $\sgn(a)|a|=|a|\sgn a=a$.
\end{theorem}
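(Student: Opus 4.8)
The plan is to write $p := (a^+)\dg$ and $q := (a^-)\dg$, so that by definition $\sgn(a) = p - q$, and then to obtain all four assertions by direct computation, reading off every needed fact from Theorem \ref{th:Prop a+a-}, which already records the relevant identities among $p$, $q$, $a^+$, $a^-$, and $|a|$. Since $p$ and $q$ commute with both $a$ and $|a|$ (parts (i) and (ii) of that theorem), each product formed below coincides with a Jordan product and hence lies in $A$ by Remarks \ref{rm:Jordan}; this is the only point that needs a moment's attention, and there is otherwise no genuine obstacle.

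First I would dispose of (i): the set $C(a)$ is a linear subspace of $A$, hence so is $CC(a) = \bigcap_{c \in C(a)} C(c)$, and since $p, q \in CC(a)$ by Theorem \ref{th:Prop a+a-}(i), it follows that $\sgn(a) = p - q \in CC(a)$. For (ii) I would expand $\sgn(a)^2 = (p - q)^2 = p^2 - pq - qp + q^2$ and simplify using $p^2 = p$, $q^2 = q$, and the orthogonality relation $pq = qp = 0$ from part (vii); the result is $p + q$, which equals $a\dg$ by part (viii).

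Finally, for (iii) I would use parts (iii) and (iv), namely $pa = ap = a^+$ and $qa = aq = -a^-$, together with $|a| = a^+ + a^-$ from Remarks \ref{rm:AbsProps}, to get $\sgn(a)a = (p - q)a = a^+ + a^- = |a|$ and, symmetrically, $a\sgn(a) = a^+ + a^- = |a|$. For (iv) I would instead invoke parts (v) and (vi), namely $p|a| = |a|p = a^+$ and $q|a| = |a|q = a^-$, together with $a = a^+ - a^-$, to get $\sgn(a)|a| = (p - q)|a| = a^+ - a^- = a$ and likewise $|a|\sgn(a) = a$. Thus the theorem is essentially a bookkeeping corollary of Theorem \ref{th:Prop a+a-}, and the main difficulty---to the extent there is one---lay in establishing that earlier result.
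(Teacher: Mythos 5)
Your proposal is correct and follows essentially the same route as the paper: the paper's proof likewise reads (i) off from Theorem \ref{th:Prop a+a-}(i), gets (ii) from parts (vii) and (viii) via the expansion $(p-q)^2=p+q$, and obtains (iii) and (iv) from the identities $pa=a^{+}$, $qa=-a^{-}$ (your use of parts (v) and (vi) for assertion (iv) is an immaterial variant). Your added remarks---that $CC(a)$ is a linear subspace and that the products lie in $A$ by commutativity---only make explicit what the paper leaves implicit.
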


\begin{proof}
By Theorem \ref{th:Prop a+a-} (i), $C(a)\subseteq C((a\sp{+})\dg)
\cap C((a\sp{-})\dg)$, from which (i) follows. Part (ii) follows 
from parts (vii) and (viii) of Theorem \ref{th:Prop a+a-}, and 
parts (iii) and (iv) are consequences of parts (iii) and (iv) of 
Theorem \ref{th:Prop a+a-}. 
\end{proof}

The formula $a=\sgn(a)|a|=|a|\sgn(a)$ in Theorem 
\ref{th:PolarDecomposition} is called the \emph{polar decomposition} 
of $a$.

\begin{corollary} \label{co:StrongAxiii}
Let $a,b\in A$. Then:  {\rm (i)} $ab=0\Leftrightarrow |a||b|=0$. 
{\rm (ii)} $|a|\dg=a\dg$. 
\end{corollary}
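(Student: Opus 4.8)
The plan is to prove part (ii) first, since part (i) then falls out immediately from it together with the carrier calculus already established in Theorem \ref{th:Carrier}.

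For (ii), the key observation is that $|a|$ and $a$ have the same square: by Remarks \ref{rm:AbsProps}, $|a|\sp{2}=a\sp{2}$. I would then invoke Theorem \ref{th:Carrier} (vii) with $n=2$, which gives $(a\sp{2})\dg=a\dg$ and, applied to $|a|$, $(|a|\sp{2})\dg=|a|\dg$. Since the two squares coincide, their carrier projections coincide, and therefore $|a|\dg=(|a|\sp{2})\dg=(a\sp{2})\dg=a\dg$, as desired. Applying the same fact to $b$ yields $|b|\dg=b\dg$ as well.

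For (i), I would reduce both equations to statements about carriers. By Theorem \ref{th:Carrier} (v), $ab=0\Leftrightarrow a\dg b\dg=0$ and, applied to the pair $|a|,|b|$, $|a||b|=0\Leftrightarrow |a|\dg|b|\dg=0$. Substituting the identities $|a|\dg=a\dg$ and $|b|\dg=b\dg$ just obtained, the two right-hand conditions become the single equation $a\dg b\dg=0$, so $ab=0\Leftrightarrow|a||b|=0$.

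There is essentially no hard step here; the whole argument is a bookkeeping exercise in the carrier projection. The one point requiring care is the order of the two parts: part (i) genuinely depends on part (ii), and part (ii) must be applied to \emph{both} $a$ and $b$, not just to $a$. All the substantive content has already been absorbed into Theorem \ref{th:Carrier} (vii) (that passing to a power leaves the carrier unchanged) and Theorem \ref{th:Carrier} (v) (that a product vanishes iff the product of the carriers vanishes); the present corollary merely combines these with the defining property $|a|\sp{2}=a\sp{2}$ of the absolute value.
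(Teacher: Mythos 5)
Your proof is correct, but it takes a genuinely different route from the paper's. The paper proves (i) first and directly, via the polar decomposition of Theorem \ref{th:PolarDecomposition}: from $|a|=\sgn(a)a$ and $|b|=b\sgn(b)$ one gets $|a||b|=\sgn(a)\,ab\,\sgn(b)$, and from $a=\sgn(a)|a|$, $b=|b|\sgn(b)$ one gets $ab=\sgn(a)\,|a||b|\,\sgn(b)$, so each of $ab=0$ and $|a||b|=0$ forces the other; it then obtains (ii) by the same trick applied to $|a|b$ versus $ab$, concluding $|a|\dg=a\dg$ from the uniqueness clause of Theorem \ref{lm:UniqueCarrier}. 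You instead bypass $\sgn$ entirely and work purely in the carrier calculus: (ii) follows from $|a|\sp{2}=a\sp{2}$ together with Theorem \ref{th:Carrier} (vii) (carriers are unchanged by powers), and (i) then follows from Theorem \ref{th:Carrier} (v) (a product vanishes iff the product of the carriers does) applied to both pairs $a,b$ and $|a|,|b|$. Both arguments are complete and about equally short; yours has the mild advantage of not needing the polar decomposition or the signum at all (only Remarks \ref{rm:AbsProps} and Theorem \ref{th:Carrier}), while the paper's version of (i) is a one-line identity that exhibits the equivalence explicitly rather than routing it through carriers. Your remarks about the order of the parts and about applying (ii) to both $a$ and $b$ are accurate.
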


\begin{proof}
We have $ab=0\Rightarrow|a||b|=\sgn(a)ab\sgn(b)=0$, and 
conversely, $|a||b|=0\Rightarrow ab=\sgn(a)|a||b|\sgn(b)=0$, 
proving (i). Arguing as above, we find that $|a|b=0\Leftrightarrow 
ab=0$, whence $|a|\dg=a\dg$, proving (ii).
\end{proof}

\section{Quadratic, Compression, and Sasaki Mappings} \label{sc:QCandSM} 

\begin{definition}
If $a\in A$, the mapping $J\sb{a}\colon A\to A$ defined by 
$J\sb{a}(b) :=aba$ for all $b\in A$ is called the \emph{quadratic 
mapping} determined by $a$. If $p\in P$, the quadratic mapping 
$J\sb{p}$ is called the \emph{compression} on $A$ with 
\emph{focus} $p$.
\end{definition}

\begin{theorem} \label{th:QuadLinOP}
If $a\in A$, then the quadratic mapping $J\sb{a}\colon \to A$ is 
both linear and order preserving.
\end{theorem}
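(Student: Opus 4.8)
The plan is to dispatch linearity quickly and then concentrate on positivity. First note that $J\sb{a}$ genuinely maps $A$ into $A$, since $aba\in A$ by Remarks~\ref{rm:Jordan}. Linearity is then immediate from the distributive and scalar laws in the associative algebra $R$: for $b,c\in A$ and $\lambda,\mu\in\reals$ one has $a(\lambda b+\mu c)a=\lambda(aba)+\mu(aca)$. Because $J\sb{a}$ is linear and $b\leq c\Leftrightarrow 0\leq c-b$ with $J\sb{a}(c)-J\sb{a}(b)=J\sb{a}(c-b)$, the order-preserving property is equivalent to the single implication $0\leq b\Rightarrow 0\leq aba$, which is what I would prove.

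The main obstacle is that $a$ need not lie in $A\sp{+}$, so [SA4] cannot be invoked directly (its hypothesis requires the outer factor to be positive). My plan is to exploit the square root from Theorem~\ref{th:UniqueSR}. Writing $r:=b\sp{1/2}\in CC(b)$ with $r\sp{2}=b$, one has $rar\in A$ (Remarks~\ref{rm:Jordan}) and the key identity $r(aba)r=(rar)\sp{2}$, so $r(aba)r\geq 0$ by [SA3]. The idea is now to ``divide out'' the two copies of $r$. When $b$ is invertible with inverse in $A$, $r$ is invertible: setting $r\sp{-1}:=b\sp{-1}r=rb\sp{-1}$ (these agree because $b\sp{-1}$ commutes with $b$, hence with $r\in CC(b)$) gives $r\sp{-1}r=rr\sp{-1}=1$. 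Since $(r\sp{-1})\sp{2}=b\sp{-1}$ is positive by [SA3], [SA2] yields $r\sp{-1}=b\sp{-1}r\geq 0$. Therefore
\[
aba=r\sp{-1}\bigl(r(aba)r\bigr)r\sp{-1}=r\sp{-1}(rar)\sp{2}r\sp{-1}\geq 0
\]
by [SA4], because both $r\sp{-1}$ and $(rar)\sp{2}$ lie in $A\sp{+}$. This settles the invertible case.

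For arbitrary $b\in A\sp{+}$, I would pass to $b\sb{\varepsilon}:=b+\varepsilon 1$ for $\varepsilon>0$. Then $b\sb{\varepsilon}\in A\sp{+}$ and $\varepsilon\sp{-1}b\sb{\varepsilon}\geq 1$, so $b\sb{\varepsilon}$ is invertible with inverse in $A$ by [SA8]; the invertible case gives $0\leq ab\sb{\varepsilon}a=aba+\varepsilon a\sp{2}$. Hence $-aba\leq\varepsilon a\sp{2}$ for every $\varepsilon>0$; taking $\varepsilon=1/n$ gives $n(-aba)\leq a\sp{2}$ for all $n\in\Nat$, and the archimedean property in [SA1] forces $aba\geq 0$, completing the proof. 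The genuinely delicate point is exactly this passage from the invertible case to the general one: it is the square-root identity $r(aba)r=(rar)\sp{2}$ together with the positivity of $r\sp{-1}$ that lets [SA4] do the work its own hypothesis would otherwise forbid, and the approximation $b\sb{\varepsilon}\to b$ with the archimedean axiom that removes the invertibility assumption.
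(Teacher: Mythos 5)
Your proof is correct, but it takes a genuinely different route from the paper's. You attack the \emph{inner} factor: you take $r=b^{1/2}$, observe $r(aba)r=(rar)^2\geq 0$ by [SA3], invert $r$ when $b$ is invertible (correctly checking $r^{-1}=b^{-1}r\in A^{+}$ via [SA3] and [SA2], so that [SA4] applies to $aba=r^{-1}(rar)^2r^{-1}$), and then remove the invertibility hypothesis by perturbing to $b+\varepsilon$, invoking [SA8], and finishing with the archimedean axiom. The paper instead factors the \emph{outer} element via its polar decomposition: with $w=\sgn(a)$ and $k=(|a|h|a|)^{1/2}$ (legitimate by [SA4] since $|a|\geq 0$), it shows $ka\dg=k$ using carrier properties and exhibits $aha=(wkw)^2$ directly as a square, so positivity follows from [SA3] alone with no approximation. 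The trade-off is clear: the paper's argument needs the signum/carrier machinery of Section 3 (Theorems \ref{th:Carrier} and \ref{th:PolarDecomposition}) but is exact, whereas yours avoids polar decomposition entirely at the cost of leaning on [SA8] and the archimedean property, neither of which the paper's proof uses. Both arguments are sound; yours is arguably more elementary in its prerequisites, and the identity $r(aba)r=(rar)^2$ combined with the $\varepsilon$-perturbation is a perfectly legitimate substitute for the polar-decomposition trick.
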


\begin{proof}
Obviously, $J\sb{a}$ is linear.  Suppose that $0\leq h\in A$.
By [SA4], $0\leq|a|h|a|$, and we define $k :=(|a|h|a|)\sp{1/2}$. 
Thus, $k\sp{2}|a|\dg=|a|h|a||a|\dg=|a|h|a|=k\sp{2}$, so by (ii) 
and parts (vii) and (iii) of Theorem \ref{th:Carrier},
\setcounter{equation}{0} 
\begin{equation} \label{eq:aba01}
k\dg=(k\sp{2})\dg\leq|a|\dg=a\dg,\text{\ whence\ }ka\dg=k.
\end{equation} 
Let $w :=\sgn(a)$. Then by parts (ii) and (iv) of Theorem 
\ref{th:PolarDecomposition}, $w\sp{2}=a\dg$ and $a=w|a|=
|a|w$; hence by (\ref{eq:aba01})
\[
0\leq(wkw)\sp{2}=wkw\sp{2}kw=wka\dg kw=wk\sp{2}w=w|a|h|a|w
 =aha=J\sb{a}(h).
\]
Suppose $b,c\in A$ with $b\leq c$, and put $h :=c-b$. Then 
$0\leq h$, therefore $0\leq J\sb{a}(h)=J\sb{a}(c)-J\sb{a}(b)$, 
whence $J\sb{a}(b)\leq J\sb{a}(c)$, i.e., $J\sb{a}$ is order 
preserving.   
\end{proof}

Condition [SA4] requires that $a,b\in A\sp{+}\Rightarrow 
aba\in A\sp{+}$; however, by Theorem \ref{th:QuadLinOP}, we 
now have the stronger result $b\in A\sp{+}\Rightarrow aba\in A
\sp{+}$ for all $a\in A$.

\begin{lemma} \label{lm:Normaba}
Let $a,b\in A$ and $p\in P$. Then: {\rm (i)} $\|J\sb{a}(b)\|\leq
\|a\sp{2}\|\|b\|=\|a\|\sp{2}\|b\|$. {\rm (ii)} $J\sb{a}\colon 
A\to A$ is norm continuous. {\rm (iii)} If $p\not=0$, then $\|p\|=1$. 
{\rm (iv)} $\|J\sb{p}(a)\|\leq\|a\|$. 
\end{lemma}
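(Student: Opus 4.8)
The plan is to derive all four parts from the linearity and order-preservation of the quadratic map $J\sb{a}$ (Theorem \ref{th:QuadLinOP}), together with the order-unit estimate $-\|b\|\leq b\leq\|b\|$ and the identity $\|a\sp{2}\|=\|a\|\sp{2}$ from Lemma \ref{lm:Norm} (ii).

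For (i), I would start from the order-unit inequality $-\|b\|\cdot 1\leq b\leq\|b\|\cdot 1$ and apply $J\sb{a}$. Since $J\sb{a}$ is linear and order preserving and $J\sb{a}(1)=a\sp{2}$, this yields $-\|b\|a\sp{2}\leq aba\leq\|b\|a\sp{2}$. Next I would invoke $0\leq a\sp{2}$ (from [SA3]), which gives $a\sp{2}\leq\|a\sp{2}\|$; multiplying by the nonnegative scalar $\|b\|$, and with the inequality reversed by the nonpositive scalar $-\|b\|$, produces $-\|b\|\|a\sp{2}\|\leq aba\leq\|b\|\|a\sp{2}\|$. The order-unit norm property then gives $\|aba\|\leq\|b\|\|a\sp{2}\|$, and $\|a\sp{2}\|=\|a\|\sp{2}$ by Lemma \ref{lm:Norm} (ii). The only point requiring care is the correct reversal of the inequality when scaling by $-\|b\|$ (the case $b=0$ being trivial).

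Part (ii) then follows at once: $J\sb{a}$ is linear, so $J\sb{a}(b)-J\sb{a}(c)=J\sb{a}(b-c)$, and by (i), $\|J\sb{a}(b)-J\sb{a}(c)\|\leq\|a\|\sp{2}\|b-c\|$; thus $J\sb{a}$ is Lipschitz, hence norm continuous.

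For (iii), rather than estimating $\|p\|$ crudely from $0\leq p\leq 1$, the cleanest route uses the idempotent law $p=p\sp{2}$ with Lemma \ref{lm:Norm} (ii): $\|p\|=\|p\sp{2}\|=\|p\|\sp{2}$, so $\|p\|(\|p\|-1)=0$. Since $\|\cdot\|$ is a genuine norm, $p\neq 0$ forces $\|p\|\neq 0$, and therefore $\|p\|=1$. Finally, (iv) is a corollary of (i) and (iii): if $p\neq 0$ then $\|J\sb{p}(a)\|\leq\|p\|\sp{2}\|a\|=\|a\|$, while if $p=0$ then $J\sb{p}(a)=0$; in either case $\|J\sb{p}(a)\|\leq\|a\|$. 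I do not anticipate any genuine obstacle here—the whole lemma is a short chain of consequences of Theorem \ref{th:QuadLinOP} and Lemma \ref{lm:Norm}—and the only mildly delicate choices are the sign handling in (i) and the recognition that the idempotent identity, not the bound $0\leq p\leq 1$, gives the sharp value $\|p\|=1$ in (iii).
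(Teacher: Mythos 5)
Your proposal is correct and follows essentially the same route as the paper: apply the linear, order-preserving map $J\sb{a}$ to $-\|b\|\leq b\leq\|b\|$, invoke $\|a\sp{2}\|=\|a\|\sp{2}$ from Lemma \ref{lm:Norm} (ii) for (i) and (iii) (the paper likewise uses $\|p\|\sp{2}=\|p\sp{2}\|=\|p\|$ rather than the crude bound), and deduce (ii) and (iv) as immediate consequences. The only cosmetic difference is that you pass through the scalar bound $a\sp{2}\leq\|a\sp{2}\|$ before applying the order-unit norm property, whereas the paper applies it directly to $-\|b\|a\sp{2}\leq aba\leq\|b\|a\sp{2}$.
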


\begin{proof}
As $-\|b\|\leq b\leq\|b\|$, we have 
\[
-\|b\|a\sp{2}=a(-\|b\|)a\leq aba\leq a\|b\|a=\|b\|a\sp{2},
\]
whence $\|aba\|\leq\|(\|b\|a\sp{2})\|=\|a\sp{2}\|\|b\|$. 
By Lemma \ref{lm:Norm} (ii), $\|a\sp{2}\|=\|a\|\sp{2}$, 
proving (i), and (ii) follows from (i). Also by Lemma \ref{lm:Norm} 
(ii), $\|p\|\sp{2}=\|p\sp{2}\|=\|p\|$, from which (iii) follows, 
and (iv) is a consequence of (i) and (iii).  
\end{proof}

Let $p\in P$ and $e\in E$. By Theorem \ref{th:QuadLinOP}, 
$J\sb{p}$ is linear and order preserving. Clearly, 
$J\sb{p}(1)=p\in P\subseteq E$. By Theorem \ref{th:e<=p}, 
$e\leq p\Rightarrow J\sb{p}(e)=e$. Also, if $J\sb{p}(e)=0$, then 
$pep=0$, whence $pe=ep=0$, so $e\leq 1-p.$ Conversely, as a 
consequence of \cite[Corollary 4.6]{FCPOAG}, compressions on 
$A$ are characterized as in the following theorem.

\begin{theorem} \label{th:Compression}
Let $J\colon A\to A$ be a linear and order-preserving mapping 
such that $J(1)\leq 1$ and, for every $e\in E$, $e\leq J(1)
\Rightarrow J(e)=e$. Then $p :=J(1)\in P$ and $J=J\sb{p}$.
\end{theorem}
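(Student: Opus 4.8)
The plan is to establish the two conclusions of Theorem~\ref{th:Compression}---namely that $p:=J(1)$ is a projection and that $J=J\sb{p}$---by first pinning down $p$ and then showing that $J$ agrees with the compression $J\sb{p}$ on a rich enough set of elements to force equality everywhere. First I would verify that $p\in P$. Since $J(1)\leq 1$ by hypothesis, $p=J(1)\in E$, so $p$ is an effect. To see that $p$ is a projection, I intend to use the characterization of projections among effects supplied by Theorem~\ref{th:CharacterizeP}. The natural route is to invoke the defining hypothesis ``$e\leq J(1)\Rightarrow J(e)=e$'' with a cleverly chosen effect, or to verify one of the equivalent conditions (iii)--(v) of Theorem~\ref{th:CharacterizeP}. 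Observe that $p=J(1)\leq 1$ means $1-p\geq 0$, and I would look for an effect $e$ with $e\leq p,\,1-p$ forcing $e=0$ (condition (v)); since $J(p)=p$ (as $p\leq J(1)$), applying $J$ to such an $e$ and exploiting order-preservation should squeeze $e$ to $0$. Concretely, if $e\leq p$ then $J(e)=e$, and if also $e\leq 1-p$ then $J(e)\leq J(1-p)=J(1)-J(p)=p-p=0$, giving $e=J(e)\leq 0$, hence $e=0$; this yields condition (v) and therefore $p\in P$.

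Once $p\in P$ is established, the compression $J\sb{p}$ is well-defined, and the remaining task is the identity $J=J\sb{p}$. I would exploit linearity of both maps to reduce the problem to checking agreement on effects, or better on projections, and then extend by the spectral/order structure. The key comparison is on elements below $p$ and elements below $1-p$. By Theorem~\ref{th:e<=p}, for any $e\in E$ with $e\leq p$ we have $J(e)=e=pep=J\sb{p}(e)$, so $J$ and $J\sb{p}$ agree on the ``lower'' interval $[0,p]$. Symmetrically, for $e\leq 1-p$ one expects $J(e)=0=J\sb{p}(e)$: indeed $J(e)\leq J(1-p)=0$ from the computation above gives $J(e)=0$, while $J\sb{p}(e)=pep=0$ follows from Theorem~\ref{th:e<=p} since $e\leq 1-p$ means $ep=pe=0$.

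The main obstacle is passing from agreement on $[0,p]\cup[0,1-p]$ to agreement on all of $A$, because a general effect $e$ need not be comparable to either $p$ or $1-p$. The plan here is to decompose an arbitrary $a\in A$ into pieces that \emph{are} controlled by $p$. The cleanest decomposition uses the compression itself: for arbitrary $a\in A$, write $a$ in terms of $pap$, $(1-p)a(1-p)$, and the cross terms $pa(1-p)+(1-p)ap$. The two diagonal blocks are handled by the previous paragraph (after rescaling into $E$), since $pap\leq\|a\|\,p$ lands in the cone below a multiple of $p$. The genuinely delicate part is the off-diagonal term: I must show $J$ annihilates $pa(1-p)+(1-p)ap$, exactly as $J\sb{p}$ does, since $J\sb{p}(pa(1-p)+(1-p)ap)=p(pa(1-p)+(1-p)ap)p=0$. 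To force $J$ to kill the off-diagonal part, I anticipate needing a two-positivity or ``Kadison--Schwarz''-type inequality for the order-preserving unital-compression $J$---controlling $J$ of a cross term by $J$ of the diagonal blocks---rather than mere positivity. This is where I expect the real work to lie, and it is precisely why the theorem is cited as a consequence of the external result \cite[Corollary 4.6]{FCPOAG}: that corollary presumably furnishes the characterization of compressions that supplies this off-diagonal control, so in a self-contained argument I would either reproduce the relevant inequality or invoke \cite[Corollary 4.6]{FCPOAG} directly to conclude $J=J\sb{p}$.
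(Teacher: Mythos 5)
The paper offers no proof of this theorem at all: it is stated as an immediate consequence of \cite[Corollary 4.6]{FCPOAG}, so your proposal, which ends by deferring the hard step to that same citation, is at least as complete as the paper's treatment. Your argument that $p:=J(1)\in P$ is correct and self-contained: $0=J(0)\leq p\leq 1$ puts $p\in E$, $p\leq J(1)$ gives $J(p)=p$ and hence $J(1-p)=0$, and then any $e\in E$ with $e\leq p,\,1-p$ satisfies $e=J(e)\leq J(1-p)=0$, which is condition (v) of Theorem \ref{th:CharacterizeP}. Your reduction of $J=J\sb{p}$ to showing that $J$ annihilates the cross term $c:=pe(1-p)+(1-p)ep$ is also the right skeleton. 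You could in fact close that gap inside the paper's own framework, without any Kadison--Schwarz machinery: for $e\in E$ and $t\in\reals$ set $v\sb{t}:=tp+(1-p)$; then Theorem \ref{th:QuadLinOP} gives
\[
0\leq J\sb{v\sb{t}}(e)=t\sp{2}\,pep+t\,c+(1-p)e(1-p),
\]
and applying the linear, order-preserving $J$ (which fixes $pep\leq p$ and kills $(1-p)e(1-p)\leq 1-p$) yields $0\leq t\sp{2}\,pep+t\,J(c)$ for all $t$. Taking $t=\pm 1/n$ and multiplying by $n$ gives $\mp nJ(c)\leq pep$ for all $n\in\Nat$, so the archimedean property of [SA1] forces $J(c)=0$; then $J(e)=pep=J\sb{p}(e)$ for every $e\in E$, and linearity extends this to all of $A$ since each $a\in A$ is a real linear combination of effects. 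So there is no genuine gap in your outline, only an unexecuted final step that the structure you set up already supports.
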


\begin{lemma} \label{lm:CpNormClosed}
Let $a\in A$ and $p\in P$. Then: {\rm (i)} $a\in C(p)
\Leftrightarrow a=J\sb{p}(a)+J\sb{1-p}(a)$. {\rm (ii)} 
$C(p)$ is norm closed in $A$.
\end{lemma}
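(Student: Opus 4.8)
The plan is to prove the two parts of Lemma~\ref{lm:CpNormClosed} in order, using the compression machinery just developed.

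\medskip

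For part~(i), I would first verify the forward implication. Suppose $a\in C(p)$. Since $p\in P$, we have $pap=pa\cdot p=ap\cdot p=ap$, so $J\sb{p}(a)=pap=ap=pa$. Similarly, writing $q:=1-p\in P$ and noting $aCp\Rightarrow aCq$, we get $J\sb{q}(a)=qaq=aq=qa$. Adding, $J\sb{p}(a)+J\sb{q}(a)=pa+qa=(p+q)a=a$, as desired. For the converse, the idea is to exploit the defining focus property of compressions. Assume $a=J\sb{p}(a)+J\sb{1-p}(a)=pap+(1-p)a(1-p)$. Expanding $(1-p)a(1-p)=a-pa-ap+pap$, the hypothesis reduces to $0=a-pa-ap+2pap-pap=a-pa-ap+pap$, which I can rearrange into a relation forcing the ``cross terms'' $pa(1-p)$ and $(1-p)ap$ to vanish. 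Concretely, left-multiplying and right-multiplying the resulting identity by $p$ and by $1-p$ and using $p(1-p)=0$ should yield $pa(1-p)=0$ and $(1-p)ap=0$; combining these gives $pa=pap=ap$, i.e.\ $a\in C(p)$. The main obstacle here is bookkeeping: carefully isolating the off-diagonal terms and applying [SA5] or Lemma~\ref{lm:ab=0}~(i) to pass from a one-sided product being zero to the commutation $pa=ap$.

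\medskip

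For part~(ii), the plan is to use the characterization from~(i) together with the norm continuity of compressions. By Lemma~\ref{lm:Normaba}~(ii), both $J\sb{p}$ and $J\sb{1-p}$ are norm-continuous linear maps on $A$, so the map $T\colon A\to A$ defined by $T(a):=J\sb{p}(a)+J\sb{1-p}(a)$ is norm continuous. By part~(i), $C(p)=\{a\in A:T(a)=a\}=\ker(T-\mathrm{id})$, the kernel of a continuous linear map. Since the norm topology makes $\{0\}$ closed and preimages of closed sets under continuous maps are closed, $C(p)$ is norm closed. Alternatively, and more directly, if $a\sb{n}\to a$ with each $a\sb{n}\in C(p)$, then $a\sb{n}=T(a\sb{n})\to T(a)$ by continuity of $T$, while $a\sb{n}\to a$; by uniqueness of limits $a=T(a)$, so $a\in C(p)$ by~(i). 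This second formulation avoids any appeal to abstract closedness of kernels and keeps everything at the level of sequences, matching the paper's stated convention that convergence means norm convergence. I expect part~(ii) to be routine once~(i) is in hand; the only subtlety is ensuring that the continuity invoked is exactly Lemma~\ref{lm:Normaba}~(ii) applied to each of the two compressions.
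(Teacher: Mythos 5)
Your proposal follows essentially the same route as the paper's proof. For (i), the forward direction is the obvious computation, and for the converse the effective step is exactly the paper's: multiply the identity $a=pap+(1-p)a(1-p)$ on the left and on the right by $p$ to get $pa=pap$ and $ap=pap$, hence $pa=ap$; for (ii) you observe, precisely as the paper does, that $C(p)$ is the fixed-point set of the norm-continuous linear map $a\mapsto J\sb{p}(a)+J\sb{1-p}(a)$ (Lemma \ref{lm:Normaba}~(ii)), hence norm closed. One correction to your intermediate algebra: substituting $(1-p)a(1-p)=a-pa-ap+pap$ into the hypothesis $a=pap+(1-p)a(1-p)$ yields $0=2pap-pa-ap$, not $0=a-pa-ap+pap$; the latter would assert $(1-p)a(1-p)=0$, which does not follow from the hypothesis. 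This slip is harmless, since the left/right multiplication by $p$ that you then describe bypasses it entirely. Note also that no appeal to [SA5] or Lemma \ref{lm:ab=0}~(i) is needed here: the converse of (i) is a purely associative computation in $R$ using $p\sp{2}=p$ and $p(1-p)=0$, so the ``one-sided to two-sided'' concern you raise does not arise.
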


\begin{proof}
If $aCp$, it is clear that $a=pap+(1-p)a(1-p)$. Conversely, 
if $a=pap+(1-p)a(1-p)$ then $pa=pap=ap$, proving (i). Define 
the mapping $c\sb{p}\colon A\to A$ by $c\sb{p}(a) :=J\sb{p}(a)
+J\sb{1-p}(a)$. By Lemma \ref{lm:Normaba} (ii), $c\sb{p}$ is 
norm continuous, and by (i), $C(p)$ is its set of fixed points, 
proving (ii). 
\end{proof}

\begin{theorem} \label{th:AscendingLimit}
Let $(a\sb{n})\sb{n\in\Nat}$ be a sequence in $A$ and suppose 
that $\lim\sb{n\rightarrow\infty}a\sb{n}=a\in A$. Then: 
\begin{enumerate}
\item If $a\sb{n}\leq b\in A$ for all $n\in\Nat$, than $a
 \leq b$.
\item If $a\sb{1}\leq a\sb{2}\leq\cdots$, then $a$ is the 
 supremum {\rm (}least upper bound{\rm )} of $(a\sb{n})
 \sb{n\in\Nat}$ in $A$.
\item The positive cone $A\sp{+}$ is norm closed in $A$. 
\end{enumerate}
\end{theorem}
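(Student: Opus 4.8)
The plan is to prove the three parts in the order (iii), (i), (ii), since part (iii) is the crux and the other two follow from it by short arguments. The only tools needed are the order-unit norm inequalities $-\|x\|\leq x\leq\|x\|$ (and ``$-y\leq x\leq y\Rightarrow\|x\|\leq\|y\|$'') recorded just after Definition \ref{df:OUNormSpace}, together with the archimedean property from [SA1].

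For part (iii), I would take a sequence $(c\sb{n})\sb{n\in\Nat}$ in $A^+$ with $c\sb{n}\rightarrow c$ and show $c\in A^+$. Writing $\epsilon\sb{n} :=\|c-c\sb{n}\|$, the norm inequality gives $c-c\sb{n}\geq-\epsilon\sb{n}1$, and since $0\leq c\sb{n}$ this yields $-c\leq\epsilon\sb{n}1$ for every $n$. Now fix $k\in\Nat$; because $\epsilon\sb{n}\rightarrow 0$, I can choose $n$ with $\epsilon\sb{n}\leq 1/k$, so $-c\leq(1/k)1$, i.e.\ $k(-c)\leq 1$. As $k$ was arbitrary, $k(-c)\leq 1$ holds for all $k\in\Nat$, and the archimedean property (applied with $-c$ in the role of $a$ and $1$ in the role of $b$) forces $-(-c)=c\in A^+$. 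This is the one genuinely delicate step: the passage from ``$-c$ is dominated by arbitrarily small positive multiples of $1$'' to ``$-c\leq 0$'' is exactly what the archimedean axiom supplies, and it is the heart of the whole theorem. The remaining estimates are routine.

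Part (i) then follows immediately: if $a\sb{n}\leq b$ for all $n$, then $(b-a\sb{n})\sb{n\in\Nat}$ lies in $A^+$ and converges to $b-a$, so $b-a\in A^+$ by (iii), i.e.\ $a\leq b$. (Equivalently, one can run the archimedean argument directly on $b-a\geq-\|a\sb{n}-a\|1$, but it is cleaner to cite (iii).)

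For part (ii), assuming $a\sb{1}\leq a\sb{2}\leq\cdots$, I first check that $a$ is an upper bound: for fixed $m$ and all $n\geq m$ we have $0\leq a\sb{n}-a\sb{m}$, and letting $n\rightarrow\infty$ gives $0\leq a-a\sb{m}$ by (iii), so $a\sb{m}\leq a$. That $a$ is the \emph{least} upper bound is then precisely part (i): any upper bound $b$ satisfies $a\sb{n}\leq b$ for all $n$, hence $a\leq b$. No further work is required, and notably none of this uses the Jordan or multiplicative structure of $A$ (axioms [SA2]--[SA9])---the result is purely a statement about the archimedean order-unit normed space underlying $A$.
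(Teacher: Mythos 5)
Your proposal is correct, but it takes a genuinely different route from the paper's. The paper proves part (i) first and does so by a detour through the positive part: it sets $p:=((a-b)^{+})\dg$, writes $(a-b)^{+}=J_{p}(a-b)\leq J_{p}(a-a_{N_m})$, and invokes the norm bound $\|J_{p}(x)\|\leq\|x\|$ for compressions (Lemma \ref{lm:Normaba}(iv)) before applying the archimedean axiom to $(a-b)^{+}$; parts (ii) and (iii) are then derived from (i). You instead prove (iii) directly from the inequalities $-\|x\|\leq x\leq\|x\|$ and the archimedean property, and obtain (i) and (ii) as immediate corollaries by applying (iii) to the sequences $(b-a_n)$ and $(a_n-a_m)_{n\geq m}$ respectively. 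Your argument is sound at every step (the passage from $-c\leq(1/k)1$ for all $k$ to $c\in A^{+}$ is exactly the archimedean axiom as stated in Definition \ref{df:OUNormSpace}), and it buys something real: it uses only [SA1], whereas the paper's proof of (i) leans on machinery ($(a-b)^{+}$, carriers, compressions) that requires [SA6], [SA7], etc. Your closing observation that the theorem is really a fact about archimedean order-unit normed spaces is therefore accurate, and arguably your organization is the more natural one. (One could also streamline the paper's own proof of (i): from $a-b\leq a-a_{N_m}\leq\|a-a_{N_m}\|\leq 1/m$ one gets $m(a-b)\leq 1$ directly, so the positive-part detour is not actually needed there either.)
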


\begin{proof}
\setcounter{equation}{0}
By hypothesis, for each $m\in\Nat$, there exists $N\sb{m}\in\Nat$ 
such that, for all $n\in\Nat$, 
\begin{equation} \label{eq:Ascend01}
N\sb{m}\leq n\Rightarrow a\sb{n}-a\leq\|a\sb{n}-a\|\leq1/m
 \Rightarrow a\sb{n}\leq a+1/m.
\end{equation}
(i) Assume the hypothesis of (i). Then, for all $m\in\Nat$, 
$a-b\leq a-a\sb{N\sb{m}}$. Let $p :=((a-b)\sp{+})\dg\in CC(a-b)$. 
Then, $(a-b)\sp{+}=p(a-b)=p(a-b)p=J\sb{p}(a-b)$, so by Lemma 
\ref{lm:Normaba} (iv) and (\ref{eq:Ascend01}), for 
every $m\in\Nat$, 
\[
(a-b)\sp{+}=J\sb{p}(a-b)\leq J\sb{p}(a-a\sb{N\sb{m}})\leq
 \|J\sb{p}(a-a\sb{N\sb{m}})\|\leq\|a-a\sb{N\sb{m}}\|\leq 1/m,     
\]
whence $m(a-b)\sp{+}\leq 1$, and since $A$ is archimedean, 
it follows that $(a-b)\sp{+}\leq 0$. But $0\leq(a-b)\sp{+}$, 
so $(a-b)\sp{+}=0$, and consequently, $a-b=-(a-b)\sp{-}\leq 
0$, i.e., $a\leq b$.

\smallskip

(ii) By (\ref{eq:Ascend01}), for each $m\in\Nat$,  
\[
a\sb{1}\leq a\sb{2}\leq\cdots\leq a\sb{N\sb{m}}\leq a+1/m;
\] 
hence $a\sb{n}\leq a+1/m$ for all $n\in\Nat$. Therefore, for each 
$n\in\Nat$, we have $m(a\sb{n}-a)\leq 1$ for all $m\in\Nat$, and 
since $A$ is archimedean, it follows that $a\sb{n}-a\leq 0$, i.e., 
$a\sb{n}\leq a$. If $a\sb{n}\leq b\in A$ for all $n\in\Nat$, then 
by (i), $a\leq b$; hence $a$ is the least upper bound of 
$(a\sb{n})\sb{n\in\Nat}$.

\smallskip(iii) Let $(c\sb{n})\sb{n\in\Nat}$ be a sequence in 
$A\sp{+}$ and suppose that $c\sb{n}\rightarrow c$. Then 
$-c\sb{n}\rightarrow -c$, and as $-c\sb{n}\leq 0$ for all $n\in\Nat$, 
(i) implies that $-c\leq 0$, i.e., $c\in A\sp{+}$.
\end{proof}

By combining the quadratic mapping $J\sb{a}$ with the carrier, we 
obtain the \emph{Sasaki mapping} on $A$ as per the following definition.

\begin{definition} \label{df:SasakiMap}
For each $a\in A$, the \emph{Sasaki mapping}\footnote{The 
terminology derives from the fact that, for $p\in P$, the 
restriction of $\phi\sb{p}$ to $P$ is a so-called \emph{Sasaki 
projection} on $P$ \cite[p. 99]{Kalm}. See Theorem \ref{th:PisanOML} 
below.} $\phi\sb{a}\colon A\to P$ is defined by $\phi\sb{a}(b) :=
(J\sb{a}(b))\dg=(aba)\dg$ for all $b\in B$. 
\end{definition}  

\begin{theorem} \label{th:phiProps}
Let $a,b,c\in A$. Then: {\rm (i)} $\phi\sb{a}(b)\leq\phi\sb{a}
(1)=a\dg$. {\rm (ii)} $0\leq b\leq c\Rightarrow\phi\sb{a}(b)
\leq\phi\sb{a}(c)$. {\rm (iii)} If $0\leq b$, then $\phi\sb{a}
(b)c=0\Rightarrow \phi\sb{a}(c)b=0$. {\rm (iv)} If $0\leq b,c$, 
then $\phi\sb{a}(b)c=0\Leftrightarrow\phi\sb{a}(c)b=0$. {\rm (v)} 
If $0\leq b$, then $\phi\sb{a}(b)=\phi\sb{a}(b\dg)$. 
\end{theorem}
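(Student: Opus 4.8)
The plan is to translate every assertion into a statement about carriers and annihilators, and then to lean on three tools: the linearity and order-preservation of the quadratic map $J\sb{a}$ (Theorem \ref{th:QuadLinOP}), the carrier calculus of Theorem \ref{th:Carrier}, and axiom [SA5]. Throughout I use the defining property of the carrier, $xy=0\Leftrightarrow x\dg y=0$, the symmetry $xy=0\Leftrightarrow yx=0$ (Lemma \ref{lm:ab=0}), and the fact that $J\sb{a}(b)=aba\in A$.

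Parts (i) and (ii) are routine. For (i), first $\phi\sb{a}(1)=(a\sp{2})\dg=a\dg$ by Theorem \ref{th:Carrier}(vii). Since $aa\dg=a$ by Lemma \ref{lm:ab=0}(iii), I get $(aba)a\dg=ab(aa\dg)=aba$, so $a\dg$ is a projection fixing $aba$ on the right; Theorem \ref{th:Carrier}(iii) then gives $\phi\sb{a}(b)=(aba)\dg\le a\dg$. For (ii), order-preservation of $J\sb{a}$ turns $0\le b\le c$ into $0\le aba\le aca$, and Theorem \ref{th:Carrier}(viii) yields $(aba)\dg\le(aca)\dg$.

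Part (iii) is the crux. Using the carrier property I rewrite $\phi\sb{a}(b)c=0$ as $(aba)c=0$, that is, $abac=0$. The key algebraic observation is the bracketing
\[
(aca)\,b\,(aca)=ac\,(abac)\,a=0 ,
\]
so $J\sb{aca}(b)=0$ with $b\in A\sp{+}$ and $aca\in A$. Then [SA5] forces $(aca)b=0$, which is exactly $\phi\sb{a}(c)b=(aca)\dg b=0$. I expect spotting this identity---recognizing that $(aca)b(aca)$ contains the factor $abac$---to be the main obstacle; once it is found, the implication is automatic. Part (iv) then follows immediately: when $b,c\ge0$, both directions of (iii) are available, giving the stated equivalence.

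For part (v), the inequality $\phi\sb{a}(b)\le\phi\sb{a}(b\dg)$ comes from (ii). Applying the order-preserving map $J\sb{b\dg}$ to $b\le\|b\|1$ and using $b=b\dg b b\dg$ gives $b\le\|b\|\,b\dg$, so monotonicity yields $\phi\sb{a}(b)\le\phi\sb{a}(\|b\|b\dg)=\phi\sb{a}(b\dg)$, the last equality because the carrier is insensitive to a positive scalar factor (the case $b=0$ being trivial). For the reverse inequality, set $p:=\phi\sb{a}(b)$. Since $p(1-p)=0$ and $1-p\ge0$, part (iv) applied to $b,1-p$ gives $\phi\sb{a}(1-p)b=0$; Theorem \ref{th:Carrier}(v) replaces $b$ by $b\dg$ to give $\phi\sb{a}(1-p)b\dg=0$; and part (iv) applied to $1-p,b\dg$ returns $\phi\sb{a}(b\dg)(1-p)=0$, i.e., $\phi\sb{a}(b\dg)\le p=\phi\sb{a}(b)$ by Theorem \ref{th:e<=p}. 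Combining the two inequalities gives $\phi\sb{a}(b)=\phi\sb{a}(b\dg)$.
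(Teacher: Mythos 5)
Your proof is correct and takes essentially the same route as the paper's: parts (i)--(iv) coincide, including the key identity $(aca)b(aca)=ac(abac)a$ followed by [SA5]. The only (harmless) variation is in (v), where you get $\phi_{a}(b)\leq\phi_{a}(b\dg)$ from $b\leq\|b\|b\dg$ and monotonicity, whereas the paper derives both inequalities from the duality in (iv).
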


\begin{proof}
(i) As $abaa\dg=aba$, Theorem \ref{th:Carrier} (iii) implies 
that $\phi\sb{a}(b)=(aba)\dg\leq a\dg$. Also, $\phi\sb{a}(1)=
(a\sp{2})\dg=a\dg$ by Theorem \ref{th:Carrier} (vii).

\smallskip

(ii) Assume that $0\leq b\leq c$. Then $0\leq J\sb{a}(b)\leq 
J\sb{a}(c)$, so $\phi\sb{a}(b)\leq\phi\sb{a}(c)$ by  
Theorem \ref{th:Carrier} (viii).

\smallskip

(iii) Suppose that $0\leq b$ and $\phi\sb{a}(b)c=0$. 
Then $(aba)\dg c=0$, whence $abac=0$, and therefore 
$(aca)b(aca)=ac(abac)a=0$, whereupon $acab=0$ by [SA5], 
and it follows that $(aca)\dg b=0$, i.e., $\phi\sb{a}(c)b=0$.

\smallskip

(iv) Follows from (iii).

(v) Suppose that $0\leq c$. We have $\phi\sb{a}(c)b
=0\Leftrightarrow\phi\sb{a}(c)b\dg=0$, and as $0\leq b\dg$, 
it follows from (iv) that $\phi\sb{a}(c)b\dg=0\Leftrightarrow
\phi\sb{a}(b\dg)c=0$.  Consequently, $\phi\sb{a}(b)c=0
\Leftrightarrow\phi\sb{a}(b\dg)c=0$. 
Putting $c=1-\phi\sb{a}(b\dg)$, we find that $\phi\sb{a}(b)=
\phi\sb{a}(b)\phi\sb{a}(b\dg)$, hence $\phi\sb{a}(b)\leq 
\phi\sb{a}(b\dg)$. Similarly, putting $c=1-\phi\sb{a}(b)$, 
we obtain $\phi\sb{a}(b\dg)\leq\phi\sb{a}(b)$. 
\end{proof}

\begin{theorem} \label{th:vAv}
Let $0\not=v\in P$ and define $vAv :=J\sb{v}(A)=\{vav:a\in A\}
=\{b\in A:b=bv=vb\}$. Then $vAv$ is norm-closed in $A$ and, with 
the partial order inherited from $A$, $vAv$ is a synaptic 
algebra with unit $v$ and enveloping algebra $vRv$.\footnote{In 
dealing with the synaptic algebra $vAv$ in the presence of the 
synaptic algebra $A$, we cannot follow the convention (previously 
adopted for $A$) of identifying real numbers $\lambda$ with 
multiples $\lambda v$ of the unit element $v$.} Moreover, the 
order-unit norm on $vAv$ is the restriction to $vAv$ of the order-unit 
norm on $A$, and for all $a,b\in vAv$, we have: $a\circ b,\,a\dg,\,|a|,
\,a\sp{+},\,a\sp{-},\,\sgn(a)\in vAv$; $J\sb{a}(A)\subseteq vAv$; 
$\phi\sb{a}(A)\subseteq vAv$; and $0\leq a\Rightarrow a\sp{1/2}\in vAv$.
\end{theorem}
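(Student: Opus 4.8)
The plan is to verify the nine axioms for $vAv$ with unit $v$ and enveloping algebra $vRv$, after recording some elementary facts. First note that $vRv$ is an associative subalgebra of $R$ with unity $v$ (for $x,y\in vRv$ one has $xy=v(\cdots)v\in vRv$ and $vx=xv=x$), that $vAv=J\sb{v}(A)$ is a linear subspace of $vRv$ since $J\sb{v}$ is linear, and that for $a,b\in vAv$ the product $ab$ computed in $vRv$ coincides with the product computed in $R$. The key identities, immediate from $v\sp{2}=v$, are that $a\in vAv$ iff $va=av=a$, equivalently $a(1-v)=(1-v)a=0$. Norm-closedness is immediate: $vAv=\{b\in A:J\sb{v}(b)=b\}$ is the fixed-point set of the norm-continuous map $J\sb{v}$ (Lemma \ref{lm:Normaba} (ii)), hence closed.

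For [SA1], the order on $vAv$ is the restriction of that on $A$, so $vAv$ is archimedean because $A$ is. That $v$ is an order unit follows by applying the order-preserving map $J\sb{v}$ (Theorem \ref{th:QuadLinOP}) to an inequality $a\leq n\cdot 1$ valid in $A$, which yields $a=J\sb{v}(a)\leq nv$. To see that the order-unit norm of $vAv$ relative to $v$ is the restriction of $\|\cdot\|$, observe that $-\lambda v\leq a\leq\lambda v$ forces $-\lambda\leq a\leq\lambda$ because $v\leq 1$, while conversely $-\lambda\leq a\leq\lambda$ yields $-\lambda v\leq a\leq\lambda v$ upon applying $J\sb{v}$ and using $a=J\sb{v}(a)$; the two estimates give equality of the norms. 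Axioms [SA2]--[SA5] and [SA9] then transfer directly from the corresponding axioms for $A$: the hypotheses (positivity, commutativity, norm convergence) are the same whether read in $vAv$ or in $A$, the conclusions hold in $A$, and membership of the relevant products ($ab$, $a\sp{2}$, $aba$) in $vAv$ follows from $va=av=a$; for [SA9] the limit lies in $vAv$ by hypothesis.

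For [SA6], let $a\in vAv$ with $0\leq a$ and let $r:=a\sp{1/2}$ be its square root in $A$ (Theorem \ref{th:UniqueSR}), so $r\in CC(a)$. Since $v$ commutes with $a$, $r$ commutes with $v$, and then $((1-v)r)\sp{2}=(1-v)\sp{2}r\sp{2}=(1-v)a=0$, so $(1-v)r=0$ by Remarks \ref{rm:NoNilpotents} (i); hence $r\in vAv$. That $r$ lies in the double commutant of $a$ computed inside $vAv$ follows because the relative commutant $C(a)\cap vAv$ is contained in the commutant of $a$ in $A$. The same device handles [SA7]: for $a\in vAv$ the carrier $a\dg$ (Theorem \ref{lm:UniqueCarrier}) lies in $CC(a)$ (Theorem \ref{th:Carrier} (vi)), hence commutes with $v$; since $a(1-v)=0$ we get $a\dg(1-v)=0$, so $a\dg\in vAv$, and the carrier property restricts to $vAv$.

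The main obstacle is [SA8], because $v\leq a$ does not give $1\leq a$ in $A$ (indeed $v\leq 1$). The plan is to set $a':=a+(1-v)$; then $a'\geq v+(1-v)=1$, so by [SA8] for $A$ there is $b'\in A$ with $a'b'=b'a'=1$. One checks that $v$ commutes with $a'$ (since $va'=a=a'v$), whence $v$ commutes with $b'=(a')\sp{-1}$, and that $va'=a'v=a$ while $(1-v)a'=a'(1-v)=1-v$. Setting $b:=vb'v\in vAv$ and using $av=va=a$, one computes $ab=ab'=va'b'=v$ and $ba=b'a=b'a'v=v$, so $b$ is the required inverse of $a$ in $vAv$ (whose unit is $v$). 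Finally, the ``moreover'' assertions follow from the machinery just assembled: for $a,b\in vAv$ one has $a\circ b=v(a\circ b)v\in vAv$ and $aca=v(aca)v\in vAv$ for every $c\in A$ (using $va=av=a$), so $J\sb{a}(A)\subseteq vAv$; the square root (from [SA6]), the carrier (from [SA7]), and hence $|a|=(a\sp{2})\sp{1/2}$, $a\sp{+}$, $a\sp{-}$, $\sgn(a)=(a\sp{+})\dg-(a\sp{-})\dg$, and $\phi\sb{a}(c)=(aca)\dg$ all remain in the vector space $vAv$.
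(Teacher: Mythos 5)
Your proposal is correct and follows essentially the same route as the paper: norm-closedness via the fixed-point set of the continuous map $J\sb{v}$, the order unit and norm via $J\sb{v}$ applied to inequalities, [SA6]--[SA7] by showing the square root and carrier from $A$ already lie in $vAv$ (you use the nilpotency of $(1-v)a\sp{1/2}$ where the paper invokes uniqueness of square roots --- an immaterial variation), and [SA8] by the same device of passing to $a+(1-v)\geq 1$ and compressing the inverse by $J\sb{v}$. No gaps; your explicit treatment of the relative double commutant in [SA6] is a point the paper leaves implicit.
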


\begin{proof}
By Lemma \ref{lm:Normaba} (ii), $J\sb{v}\colon A\to A$ is norm 
continuous, and since $vAv$ is the set of fixed points of $J\sb{v}$, 
it follows that $vAv$ is a norm-closed linear subspace of $A$. Let  
$b\in vAv$. Then there exists $n\in\Nat$ such that $b\leq n=n1$; hence  
$b=J\sb{v}(b)\leq nJ\sb{v}(1)=nv$, so $v$ is an order unit in $vAv$. 
By a similar argument, if $0\leq\lambda\in\reals$, then $-\lambda
\leq b\leq\lambda\Rightarrow -\lambda v\leq b\leq\lambda v$; 
conversely, $-\lambda v\leq b\leq\lambda v\Rightarrow-\lambda
\leq b\leq\lambda$ follows from the fact that $0\leq v\leq 1$; 
hence $\|b\|=\inf\{0<\lambda\in\reals:-\lambda v\leq b\leq
\lambda v\}$. Thus, [SA1] holds for $vAv$. 

That $vAv$ satisfies [SA2]--[SA5] is obvious. If $0\leq b\in vAv$, 
then, since $b=bv=vb$ and $b\sp{1/2}\in CC(b)$, we have $0\leq 
vb\sp{1/2}v=vb\sp{1/2}=b\sp{1/2}v$ with $(vb\sp{1/2})\sp{2}=b$; 
hence $b\sp{1/2}=vb\sp{1/2}$ by the uniqueness of square roots 
(Theorem \ref{th:UniqueSR}), and it follows that $b\sp{1/2}\in 
vAv$. Thus, $vAv$ satisfies [SA6]. If $b\in vAv$, we again have 
$b=bv=vb$, whence $b\dg\leq v$, and since $b\dg\in CC(b)$, 
it follows easily that $b\dg\in vAv$. Thus, $vAv$ satisfies [SA7].

To show that $vAv$ satisfies [SA8], suppose that $v\leq b\in vAv$. 
Then $1=v+(1-v)\leq b+1-v$ with $b=vb=bv$. By [SA8], there exists 
$c\in A$ such that $1=c(b+1-v)=(b+1-v)c$.  Applying $J\sb{v}$ to 
both sides of the latter equation, we find that $v=vcbv=vbcv=
vcvb=bvcv$, and since $vcv\in vAv$, it follows that $vAv$ satisfies 
[SA8].  Obviously, $vAv$ inherits condition $[SA9]$ from $A$. We 
omit the completely straightforward proofs of the remaining 
assertions of the theorem.      
\end{proof}

\section{Orthomodularity of the Projection Lattice} 
\label{sc:OML} 

\begin{definition} \label{df:perp}
The mapping $\sp{\perp}\colon P\to P$ is defined by $p\sp{\perp}
:=1-p$ for all $p\in P$. If $p,q\in P$, we say that $p$ is 
\emph{orthogonal} to $q$, in symbols $p\perp q$, iff $p\leq 
q\sp{\perp}$.
\end{definition}

We note that $p\perp q\Rightarrow q\perp p$ and that $p\perp p
\Leftrightarrow p=0$. In this section we are going to prove that, 
with $p\mapsto p\sp{\perp} :=1-p$ as the \emph{orthocomplementation}, 
$P$ is a \emph{orthomodular lattice} as per the following 
definition \cite{Beran, Kalm}.

\begin{definition} \label{df:OMP,OML}
Let $X$ be a partially ordered set (poset). A mapping 
$x\mapsto x\sp{\perp}$ from $X$ to $X$ is called an 
\emph{involution} iff it is order reversing ($x\leq y
\Rightarrow y\sp{\perp}\leq x\sp{\perp}$) and of period 
$2$ ($(x\sp{\perp})\sp{\perp}=x$) for all $x,y\in X$. 
An \emph{orthomodular poset} (OMP) is a partially ordered 
set $X$ with a smallest element $0$, a largest element $1$, 
and an involution $\sp{\perp}\colon X\to X$, called the 
\emph{orthocomplementation}, such that, for all $x,y\in X$: 
(i) The infimum (greatest lower bound) $x\wedge x\sp{\perp}$ 
of $x$ and $x\sp{\perp}$ exists in $X$ and $x\wedge x
\sp{\perp}=0$. (ii) If $x\leq y\sp{\perp}$, then the supremum 
(least upper bound) $x\vee y$ exists in $X$. (iii) If $x\leq y$, 
then $y=x\vee(x\sp{\perp}\wedge y)$. An \emph{orthomodular lattice} 
(OML) is an OMP $X$ that is a lattice (i.e., every pair $x,y\in X$ 
has an infimum $x\wedge y$ and a supremum $x\vee y$ in $X$.) 
\end{definition}

Let $X$ be a poset and let $a,b,x,y\in X$. If we write 
$a=x\wedge y$, or $x\wedge y=a$, we mean that the infimum 
(greatest lower bound) $x\wedge y$ of $x$ and $y$ exists in 
$X$ and that it equals $a$. A similar convention applies to an  
existing supremum (least upper bound) $b=x\vee y$ of $x$ and $y$ 
in $X$.  An involution $x\mapsto x\sp{\perp}$ on $X$ gives rise 
to a \emph{De\,Morgan duality} on $X$ whereby existing infima are 
converted to suprema and \emph{vice versa}. For instance, if 
$a=x\wedge y$, then $a\sp{\perp}=x\sp{\perp}\vee y\sp{\perp}$. 
Also, if $X$ has a smallest element $0$ and a largest element 
$1$, then $0\sp{\perp}=1$ and $1\sp{\perp}=0$. Obviously, the 
mapping $p\mapsto p\sp{\perp} =1-p$ (respectively, $e\mapsto 1-e$) 
is an involution on the poset $P$ (respectively, on the poset $E$),  
and $a\mapsto -a$ is an involution on $A$. 

Suppose that $X$ is an OMP with $x\mapsto x\sp{\perp}$ as the 
orthocomplementation. Then by Definition \ref{df:OMP,OML} (i) 
and De\,Morgan duality, we have both $x\wedge x\sp{\perp}=0$ 
and $x\vee x\sp{\perp}=1$, i.e., $x\sp{\perp}$ is an \emph
{orthogonal complement}, or for short, an \emph{orthocomplement} 
of $x$ in $X$. Let $x,y\in X$ with $x\leq y$. Then $x\leq(y\sp
{\perp})\sp{\perp}$, whence $x\vee y\sp{\perp}$ exists in $X$ 
by Definition \ref{df:OMP,OML} (ii), and therefore $x\sp{\perp}
\wedge y=(x\vee y\sp{\perp})\sp{\perp}$ exists in $X$ by 
De\,Morgan duality. Since $x\leq x\vee y\sp{\perp}=
(x\sp{\perp}\wedge y)\sp{\perp}$, it also follows from 
Definition \ref{df:OMP,OML} (ii) that the supremum $x\vee
(x\sp{\perp}\wedge y)$ exists in $X$. The condition $x\leq y
\Rightarrow y=x\vee(x\sp{\perp}\wedge y)$ in Definition 
\ref{df:OMP,OML} (iii) is called the \emph{orthomodular 
law}.  

\begin{lemma} \label{lm:PisanOMP}
For all $p,q\in P$: {\rm (i)} $pCq\Rightarrow pq=p\wedge q$. 
{\rm (ii)} $p\perp q\Leftrightarrow pq=0$. {\rm (iii)} $p
\perp q\Rightarrow p\vee q=p+q$. {\rm (iv)} $p\leq q
\Rightarrow q-p=p\sp{\perp}\wedge q\in P$. {\rm (v)} 
With $p\mapsto p\sp{\perp} :=1-p$ as the orthocomplementation, 
$P$ is an OMP. 
\end{lemma}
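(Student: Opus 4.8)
The plan is to prove the five parts in the stated order, since each relies on the preceding ones, and to build everything on the carrier machinery of Section 2 together with the effect-algebra characterization of projections in Theorem \ref{th:CharacterizeP}. For part (i), assume $pCq$. Then $pq=qp$ is a projection (it equals $p\circ q$ and $(pq)^2=p^2q^2=pq$), it lies below both $p$ and $q$ by Theorem \ref{th:e<=p}, and any common lower bound $e\in E$ satisfies $e\leq p,q$, hence $e=ep=pe$ and $e=eq$, so $e=e(pq)=(pq)e\leq pq$ again by Theorem \ref{th:e<=p}; thus $pq$ is the infimum. For part (ii), $p\perp q$ means $p\leq q^{\perp}=1-q$, which by Theorem \ref{th:e<=p} is exactly $p=p(1-q)=p-pq$, i.e.\ $pq=0$; conversely $pq=0$ gives $p\leq 1-q$.

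For part (iii), assume $p\perp q$, so $pq=qp=0$ by (ii). Then $p+q$ is a projection since $(p+q)^2=p^2+pq+qp+q^2=p+q$, and clearly $p,q\leq p+q$. For the least-upper-bound property, suppose $r\in P$ with $p,q\leq r$. Then $p=rp$ and $q=rq$ by Theorem \ref{th:e<=p}, so $r(p+q)=p+q$, giving $p+q\leq r$, again by Theorem \ref{th:e<=p}; hence $p\vee q=p+q$. Part (iv) is where the orthomodular structure begins to emerge: assume $p\leq q$, so $p=pq=qp$ by Theorem \ref{th:e<=p}. Then $q-p$ is positive, and $(q-p)^2=q^2-pq-qp+p^2=q-p$, so $q-p\in P$; moreover $q-p\leq q$ and $(q-p)p=qp-p^2=0$ shows $q-p\leq 1-p=p^{\perp}$ via (ii), so $q-p\leq p^{\perp}\wedge q$. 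To get equality I must show $q-p$ is the \emph{greatest} lower bound of $p^{\perp}$ and $q$: if $e\in E$ with $e\leq p^{\perp}$ and $e\leq q$, then $ep=0$ and $e=eq$, whence $e=eq=e(q-p)+ep=e(q-p)\leq q-p$ by Theorem \ref{th:e<=p}; so indeed $q-p=p^{\perp}\wedge q$.

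For part (v), I must verify the three OMP axioms of Definition \ref{df:OMP,OML} for $P$ with $p^{\perp}:=1-p$. The map $p\mapsto 1-p$ is an order-reversing period-two involution on $P$ (already noted in the text), $0$ and $1$ are the least and greatest elements of $P$ (from Remarks \ref{rm:PsubsetofE}), and these are routine. Axiom (i) asks that $p\wedge p^{\perp}$ exist and equal $0$: since $pCp^{\perp}$, part (i) gives $p\wedge p^{\perp}=p(1-p)=p-p^2=0$. Axiom (ii) asks that $p\vee q$ exist whenever $p\leq q^{\perp}$, i.e.\ whenever $p\perp q$, and this is supplied by part (iii). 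Axiom (iii), the orthomodular law, asks that $p\leq q$ imply $q=p\vee(p^{\perp}\wedge q)$; by part (iv), $p^{\perp}\wedge q=q-p$, and since $p\perp(q-p)$ (as $p(q-p)=0$), part (iii) yields $p\vee(q-p)=p+(q-p)=q$, exactly the orthomodular identity.

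I expect the main obstacle to be part (iv), specifically proving that $q-p$ is the \emph{infimum} of $p^{\perp}$ and $q$ rather than merely a common lower bound that happens to be a projection. The upper-bound-of-lower-bounds argument must be run entirely inside the effect algebra $E$ (allowing arbitrary effects $e\leq p^{\perp},q$, not just projections) and then concluded with Theorem \ref{th:e<=p}; getting the commutation relations $ep=0$ and $eq=e$ lined up correctly, and recognizing that $e=e(q-p)$ forces $e\leq q-p$, is the delicate point. Once (iv) is secured, part (v) is essentially an assembly of (i)--(iv) against the three axioms, with the orthomodular law falling out immediately from the decomposition $q=p+(q-p)$.
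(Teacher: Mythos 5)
Your proposal is correct, and parts (i), (ii), (iii), and (v) follow essentially the same path as the paper: $pq$ and $p+q$ are shown to be projections, their extremal properties are extracted from Theorem \ref{th:e<=p}, and the orthomodular law is assembled from (iii) and (iv) via the decomposition $q=p+(q-p)$. The one place you genuinely diverge is part (iv), which you correctly identify as the crux. The paper dispatches it in two lines: from $p\leq q$ one gets $p\perp q\sp{\perp}$, so part (iii) gives $p\vee q\sp{\perp}=p+q\sp{\perp}$, and De\,Morgan duality (established earlier in the text for any involution) converts this existing supremum into the existing infimum $p\sp{\perp}\wedge q=(p+q\sp{\perp})\sp{\perp}=q-p$. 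You instead verify the universal property of the infimum by hand: you check that $q-p$ is a projection below both $p\sp{\perp}$ and $q$, and that any effect $e\leq p\sp{\perp},q$ satisfies $e=e(q-p)$, hence $e\leq q-p$. Your route is longer but buys a marginally stronger conclusion (that $q-p$ is the infimum of $p\sp{\perp}$ and $q$ taken over all of $E$, not merely over $P$), and it avoids any reliance on the De\,Morgan remark; the paper's route is shorter and reuses (iii) so that no new computation with effects is needed. Both are sound.
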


\begin{proof}
(i) Assume that $pq=qp$. Obviously, $(pq)\sp{2}=pq$, so $pq\in P$. 
Also $p(pq)=pq$ and $q(pq)=pq$, so $pq\leq p,\,q$ by Theorem 
\ref{th:e<=p}. Suppose that $r\in P$ and $r\leq p,\,q$. Again by 
Theorem \ref{th:e<=p}, $rp=pr=r$ and $rq=qr=r$, whence $rpq=r$, 
i.e., $r\leq pq$. Therefore $pq=p\wedge q$.

\smallskip

(ii) By Theorem \ref{th:e<=p}, $p\perp q\Leftrightarrow 
p\leq1-q\Leftrightarrow p=p(1-q)=p-pq\Leftrightarrow pq=0.$

\smallskip

(iii) Suppose that $p\perp q$ Then $pq=0$ by (ii), 
so $qp=0$ by Lemma \ref{lm:ab=0} (iv), and it follows that 
$(p+q)\sp{2}=p\sp{2}+q\sp{2}=p+q$, i.e., $p+q\in P$. As $0
\leq p,q$, we have $p,q\leq p+q$. Suppose that $r\in P$ with 
$p,q\leq r$. Then, by Theorem \ref{th:e<=p}, $p=pr$ and $q=qr$, 
whereupon $p+q=(p+q)r$, i.e., $p+q\leq r$. Therefore, 
$p+q=p\vee q$.

\smallskip 

(iv) Suppose that $p\leq q=(q\sp{\perp})\sp{\perp}$. 
Then by (iii), $p+q\sp{\perp}=p\vee q\sp{\perp}\in P$, whence 
$(p+q\sp{\perp})\sp{\perp}=p\sp{\perp}\wedge q\in P$. But 
$(p+q\sp{\perp})\sp{\perp}=1-(p+1-q)=q-p$.

\smallskip

(v) Obviously, $0$ is the smallest element and $1$ 
is the largest element in the poset $P$. In view of (ii), it 
remains only to show that the orthomodular law holds in $P$. 
But, if $p,q\in P$ with $p\leq q$, then by (iv), $q-p=p
\sp{\perp}\wedge q$ and by (iii), $q=p+(q-p)=p+(p\sp{\perp}
\wedge q)=p\vee(p\sp{\perp}\wedge q)$.
\end{proof}

\begin{theorem} \label{th:PreserveSup}
Let $a\in A$. Then: {\rm (i)} If $p,q\in P$, then 
$\phi\sb{a}(p)\perp q\Leftrightarrow p\perp\phi\sb{a}(q)$.
{\rm (ii)} $\phi\sb{a}$ preserves all existing suprema in 
$P$, i.e., if $Q\subseteq P$ and $r=\bigvee Q$, then 
$\phi\sb{a}(r)=\bigvee\{\phi\sb{a}(q):q\in Q\}$. 
\end{theorem}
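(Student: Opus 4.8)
The plan is to establish (i) first as a direct ``adjointness'' statement, and then to leverage it together with monotonicity to obtain (ii) by a De\,Morgan-style argument. No fresh appeal to the synaptic axioms should be needed beyond what is already packaged in Theorem \ref{th:phiProps}.

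For part (i), the idea is to translate orthogonality into products via Lemma \ref{lm:PisanOMP} (ii) and then invoke the symmetry in Theorem \ref{th:phiProps} (iv). Since $p,q\in P$ are positive (Remarks \ref{rm:PsubsetofE}), I would write $\phi\sb{a}(p)\perp q\Leftrightarrow\phi\sb{a}(p)q=0$, apply Theorem \ref{th:phiProps} (iv) with $b=p$ and $c=q$ to get $\phi\sb{a}(p)q=0\Leftrightarrow\phi\sb{a}(q)p=0$, and then read the right-hand side back as $p\perp\phi\sb{a}(q)$ using Lemma \ref{lm:PisanOMP} (ii) together with $\phi\sb{a}(q)p=0\Leftrightarrow p\phi\sb{a}(q)=0$ from Lemma \ref{lm:ab=0} (iv). This chain gives (i) at once.

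For part (ii), let $r=\bigvee Q$ in $P$. First I would show $\phi\sb{a}(r)$ is an upper bound of $\{\phi\sb{a}(q):q\in Q\}$: each $q\leq r$ forces $\phi\sb{a}(q)\leq\phi\sb{a}(r)$ by the monotonicity in Theorem \ref{th:phiProps} (ii). To see that it is the \emph{least} upper bound, let $s\in P$ be any upper bound, so $\phi\sb{a}(q)\leq s$---equivalently $\phi\sb{a}(q)\perp s\sp{\perp}$---for every $q\in Q$. By (i), read as $\phi\sb{a}(q)\perp s\sp{\perp}\Leftrightarrow q\perp\phi\sb{a}(s\sp{\perp})$, this converts to $q\perp\phi\sb{a}(s\sp{\perp})$, i.e.\ $q\leq\phi\sb{a}(s\sp{\perp})\sp{\perp}$, for all $q\in Q$. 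Thus $\phi\sb{a}(s\sp{\perp})\sp{\perp}\in P$ is an upper bound of $Q$, so $r\leq\phi\sb{a}(s\sp{\perp})\sp{\perp}$, i.e.\ $r\perp\phi\sb{a}(s\sp{\perp})$. A second use of (i), now read as $\phi\sb{a}(r)\perp s\sp{\perp}\Leftrightarrow r\perp\phi\sb{a}(s\sp{\perp})$, yields $\phi\sb{a}(r)\perp s\sp{\perp}$, that is, $\phi\sb{a}(r)\leq s$. Hence $\phi\sb{a}(r)=\bigvee\{\phi\sb{a}(q):q\in Q\}$.

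The proof has no genuine computational obstacle; the substance lies entirely in part (i), which exhibits $\phi\sb{a}$ and its ``twin'' $b\mapsto\phi\sb{a}(b\sp{\perp})\sp{\perp}$ as forming an adjoint-like pair relative to the orthogonality relation on $P$. The only thing to watch is the bookkeeping of orthocomplements in (ii): one must push an inequality $\phi\sb{a}(q)\leq s$ across $\phi\sb{a}$ by first rewriting it as an orthogonality $\phi\sb{a}(q)\perp s\sp{\perp}$, and then push it back the same way after exploiting $r=\bigvee Q$. Once (i) is in hand, preservation of arbitrary existing suprema is a purely order-theoretic consequence of monotonicity plus this adjointness.
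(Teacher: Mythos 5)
Your proposal is correct and follows essentially the same route as the paper: part (i) from Theorem \ref{th:phiProps} (iv) combined with Lemma \ref{lm:PisanOMP} (ii), and part (ii) by monotonicity for the upper-bound half plus the two-fold use of (i) (passing through $s\sp{\perp}$ and $\phi\sb{a}(s\sp{\perp})\sp{\perp}$) for minimality. The only difference is that you spell out the product manipulations in (i) that the paper leaves implicit.
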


\begin{proof}
Part (i) follows from Theorem \ref{th:phiProps} (iv) and Lemma 
\ref{lm:PisanOMP} (ii). To prove part (ii), suppose that $Q
\subseteq P$ and $r=\bigvee Q$. Then, for all $q\in Q$, $0\leq r
\leq q$, whence $\phi\sb{a}(q)\leq\phi\sb{a}(r)$ by Theorem 
\ref{th:phiProps} (ii). Suppose that $t\in P$ and $\phi\sb{a}(q)
\leq t$ for all $q\in Q$. Then, for all $q\in Q$, $\phi\sb{a}(q)
\perp t\sp{\perp}$, whence, by (i), $q\perp\phi\sb{a}(t\sp{\perp})$, 
i.e., $q\leq(\phi\sb{a}(t\sp{\perp}))\sp{\perp}$, and it follows 
that $r\leq(\phi\sb{a}(t\sp{\perp}))\sp{\perp}$. Consequently, by 
(i) again, $\phi\sb{a}(r)\perp t\sp{\perp}$, i.e., $\phi\sb{a}(r)
\leq t$, and therefore $\phi\sb{a}(r)=\bigvee\{\phi\sb{a}(q):q\in Q\}$. 
\end{proof}

\begin{lemma} \label{lm:phisbp}
Let $p,q,r\in P$. Then: {\rm (i)} $\phi\sb{p}(r)\leq p$. 
{\rm (ii)} $r\leq p\Leftrightarrow\phi\sb{p}(r)=r$. 
{\rm (iii)} $r\perp p\Leftrightarrow\phi\sb{p}(r)=0$.
{\rm (iv)} $p\wedge q$ exists in $P$ and $p\wedge q=
p-\phi\sb{p}(q\sp{\perp})$. 
\end{lemma}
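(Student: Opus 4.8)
The plan is to dispose of parts (i)--(iii) quickly, since they follow directly from the machinery already in place, and to reserve the real work for (iv). For (i), I would apply Theorem~\ref{th:phiProps}(i) with $a=p$: because $p\in P$ gives $p\dg=p$ (Theorem~\ref{th:Carrier}(ii)), we get $\phi_p(r)\leq\phi_p(1)=p\dg=p$. For (ii), if $r\leq p$ then $r=prp=J_p(r)$ by Theorem~\ref{th:e<=p}, so $\phi_p(r)=(J_p(r))\dg=r\dg=r$ since $r\in P$; conversely $\phi_p(r)=r$ gives $r=\phi_p(r)\leq p$ by (i). For (iii), $r\perp p$ means $rp=0$ (Lemma~\ref{lm:PisanOMP}(ii)), hence also $pr=0$, so $prp=0$ and $\phi_p(r)=0\dg=0$ by Theorem~\ref{th:Carrier}(i); conversely $\phi_p(r)=(prp)\dg=0$ forces $prp=0$ (Theorem~\ref{th:Carrier}(i)), whence $pr=rp=0$ by [SA5], i.e. $r\perp p$.

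The substantive claim is (iv). I would set $s:=\phi_p(q\sp{\perp})=(pq\sp{\perp}p)\dg$ and $m:=p-s$. By (i), $s\leq p$, so Lemma~\ref{lm:PisanOMP}(iv) shows that $m=s\sp{\perp}\wedge p\in P$ and $m\leq p$; thus $m$ is a projection dominated by $p$. Since $s\leq p$ gives $sp=s$ (Theorem~\ref{th:e<=p}), a one-line computation yields $sm=sp-s\sp{2}=s-s=0$, and hence $ms=0$ by Lemma~\ref{lm:ab=0}(i). Writing $c:=pq\sp{\perp}p$, which is positive with carrier $c\dg=s$, the goal splits into showing (a) $m\leq q$ and (b) $m$ is the \emph{greatest} lower bound of $\{p,q\}$ in $P$.

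For (a), I would argue $m\perp q\sp{\perp}$. From $sm=0$ the defining carrier property (Definition~\ref{df:carrier}) gives $cm=0$, hence $mc=0$ by Lemma~\ref{lm:ab=0}(iv); since $mp=m$ this reads $mq\sp{\perp}p=0$. Now chasing this zero through Lemma~\ref{lm:ab=0}(iv) gives $q\sp{\perp}pm=0$, and absorbing $pm=m$ collapses it to $q\sp{\perp}m=0$, hence $mq\sp{\perp}=0$, i.e. $m\leq q$. For (b), let $r\in P$ with $r\leq p$ and $r\leq q$. Then $rq\sp{\perp}=0$ (Lemma~\ref{lm:PisanOMP}(ii)), and using $pr=r$ one computes $cr=pq\sp{\perp}(pr)=pq\sp{\perp}r=0$; the carrier property then yields $sr=0$, hence $rs=0$, and therefore $rm=rp-rs=r$, so $r\leq m$ by Theorem~\ref{th:e<=p}. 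Combining (a) and (b) gives $m=p\wedge q=p-\phi_p(q\sp{\perp})$.

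The first three parts carry essentially no risk. The main obstacle is the carrier bookkeeping in (iv): the arguments repeatedly produce \emph{one-sided} annihilation relations (e.g. $mq\sp{\perp}p=0$) that must be converted into the two-sided orthogonality $m\perp q\sp{\perp}$. Managing this requires careful, repeated use of the left/right symmetry of annihilation (Lemma~\ref{lm:ab=0}(iv)), the absorption identities $pm=mp=m$ and $sp=s$ coming from $m\leq p$ and $s\leq p$, and the carrier equivalence $cb=0\Leftrightarrow sb=0$ for $c=pq\sp{\perp}p$; the chief danger is simply losing track of which factor sits on which side.
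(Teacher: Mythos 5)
Parts (i)--(iii) of your proposal coincide with the paper's proof essentially line for line. For part (iv) you take a genuinely different route: the paper sets $t:=1-\phi_p(q^{\perp})$ and runs everything through the Sasaki adjunction of Theorem~\ref{th:PreserveSup}\,(i) ($\phi_a(p)\perp q\Leftrightarrow p\perp\phi_a(q)$), using it once to get $p\wedge t\leq q$ and once for maximality, whereas you unpack that adjunction into direct carrier computations with $c:=pq^{\perp}p$ and $s:=c\dg$. Your version is more hands-on and self-contained (it only needs Definition~\ref{df:carrier}, Lemma~\ref{lm:ab=0}, and [SA5]); the paper's version is shorter because the adjunction has already been paid for in Theorem~\ref{th:phiProps}\,(iii)--(iv). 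Your maximality argument (b) is correct as written and is essentially the computational shadow of the paper's second use of the adjunction.

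One step in your part (a) is misjustified, though the conclusion survives. From $mq^{\perp}p=0$ you invoke Lemma~\ref{lm:ab=0}\,(iv) to get $q^{\perp}pm=0$, but that lemma swaps $ab=0$ into $ba=0$ only for $a,b\in A$, and neither $q^{\perp}p$ nor $mq^{\perp}$ need lie in $A$; the only legitimate two-factor grouping is $mc=0\Leftrightarrow cm=0$, and $cm=0$ yields $pq^{\perp}m=0$, not $q^{\perp}pm=0$. The repair is one line with a tool you already use in (iii): from $mc=0$ you get $mq^{\perp}m=(mp)q^{\perp}(pm)=mcm=0$, and since $0\leq q^{\perp}$, [SA5] gives $mq^{\perp}=q^{\perp}m=0$, i.e.\ $m\perp q^{\perp}$ and hence $m\leq q$. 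With that substitution your proof is complete.
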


\begin{proof}
\setcounter{equation}{0}
(i) By Theorems \ref{th:phiProps} (i) and \ref{th:Carrier} (ii), 
$\phi\sb{p}(r)\leq p\dg=p$.

\smallskip

(ii) If $r\leq p$, then $r=pr=rp$, so $\phi\sb{p}(r)
=(prp)\dg=r\dg=r$. The converse implication follows from (i). 

\smallskip

(iii) By Lemma \ref{lm:PisanOMP} (ii), [SA5], 
and Theorem \ref{th:Carrier} (i), $p\perp r\Leftrightarrow 
pr=0\Leftrightarrow prp=0\Leftrightarrow(prp)\dg=0\Leftrightarrow 
\phi\sb{p}(r)=0$.

\smallskip

(iv) Let $t :=(\phi\sb{p}(q\sp{\perp}))\sp{\perp}=1
-\phi\sb{p}(q\sp{\perp})\in P$. By (i), $\phi\sb{p}(q\sp{\perp})
\leq p$, whence $pC\phi\sb{p}(q\sp{\perp})$ and $p\phi\sb{p}
(q\sp{\perp})=\phi\sb{p}(q\sp{\perp})$. Therefore, by parts (i) and 
(iv) of Lemma \ref{lm:PisanOMP},
\begin{equation} \label{eq:OML01}
p\wedge t=pt=tp=p-\phi\sb{p}(q\sp{\perp})=p\wedge(\phi\sb{p}(q\sp{\perp}))
\sp{\perp}\in P.
\end{equation}
By (\ref{eq:OML01}), $p\wedge t\perp\phi\sb{p}(q\sp{\perp})$, whence 
by Theorem \ref{th:PreserveSup} (i), $\phi\sb{p}(p\wedge t)\perp 
q\sp{\perp}$, i.e. $\phi\sb{p}(p\wedge t)\leq q$. But, $p\wedge t
\leq p$, whence by (ii), $\phi\sb{p}(p\wedge t)=p\wedge t$, and we 
have $p\wedge t\leq q$. Thus $p\wedge t\leq p,q$. Suppose $r\in P$ 
and $r\leq p,q$. By (ii), $\phi\sb{p}(r)=r\leq q$, so $\phi\sb{p}(r)
\perp q\sp{\perp}$, and therefore $r\perp\phi\sb{p}(q\sp{\perp})$ by 
Theorem \ref{th:PreserveSup} (i); hence, $r\leq(\phi\sb{p}(q
\sp{\perp}))\sp{\perp}=t$. But $r\leq p$; hence $r\leq p\wedge t$ 
by (\ref{eq:OML01}), and it follows that $p\wedge t=p\wedge q$.
\end{proof}

\begin{theorem} \label{th:PisanOML}
$P$ is an OML and, for all $p,q\in P$, $\phi\sb{p}(q)=
p\wedge(p\sp{\perp}\vee q)$.
\end{theorem}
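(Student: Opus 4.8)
The plan is to treat the two assertions separately, each as a short consequence of the lemmas already established. For the lattice claim, I would start from Lemma \ref{lm:PisanOMP} (v), which already tells us that $P$, orthocomplemented by $p\mapsto p\sp{\perp}:=1-p$, is an OMP; the only thing left is to produce, for every pair $p,q\in P$, both an infimum and a supremum in $P$. The infimum $p\wedge q$ is handed to us directly by Lemma \ref{lm:phisbp} (iv). To get the supremum, I would apply Lemma \ref{lm:phisbp} (iv) to the pair $p\sp{\perp},q\sp{\perp}$ to see that $p\sp{\perp}\wedge q\sp{\perp}$ exists in $P$, and then invoke the De\,Morgan duality attached to the involution $\sp{\perp}$ to convert this infimum into the supremum $p\vee q=(p\sp{\perp}\wedge q\sp{\perp})\sp{\perp}$. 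Thus $P$ is a lattice, and an OMP that is a lattice is, by definition, an OML.

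For the identity $\phi\sb{p}(q)=p\wedge(p\sp{\perp}\vee q)$, the idea is to run the meet formula $p\wedge s=p-\phi\sb{p}(s\sp{\perp})$ of Lemma \ref{lm:phisbp} (iv) twice, with two different choices of $s$, and then use that $\phi\sb{p}$ fixes projections lying below $p$. Taking $s:=q\sp{\perp}$ first gives
\[
p\wedge q\sp{\perp}=p-\phi\sb{p}(q),
\]
a projection contained in $p$. Taking $s:=p\sp{\perp}\vee q$ next, and rewriting $(p\sp{\perp}\vee q)\sp{\perp}=p\wedge q\sp{\perp}$ by De\,Morgan duality, gives
\[
p\wedge(p\sp{\perp}\vee q)=p-\phi\sb{p}(p\wedge q\sp{\perp}).
\]
Because $p\wedge q\sp{\perp}\in P$ with $p\wedge q\sp{\perp}\leq p$, Lemma \ref{lm:phisbp} (ii) yields $\phi\sb{p}(p\wedge q\sp{\perp})=p\wedge q\sp{\perp}$, and substituting the first display into the second collapses the right-hand side:
\[
p\wedge(p\sp{\perp}\vee q)=p-(p\wedge q\sp{\perp})=p-(p-\phi\sb{p}(q))=\phi\sb{p}(q).
\]

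Both arguments rest entirely on results already in hand, so I do not anticipate a genuine obstacle here; the proof is essentially bookkeeping. The one point that deserves care is the correct handling of the orthocomplement, namely applying De\,Morgan duality to identify $(p\sp{\perp}\vee q)\sp{\perp}$ with $p\wedge q\sp{\perp}$, and verifying that each intermediate meet is in fact a projection so that Lemma \ref{lm:phisbp} (ii) applies. Everything else is a direct substitution into the meet formula of Lemma \ref{lm:phisbp} (iv).
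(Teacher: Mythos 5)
Your proof is correct, and for the identity $\phi\sb{p}(q)=p\wedge(p\sp{\perp}\vee q)$ it takes a genuinely different route from the paper. The lattice half is the same in both: Lemma \ref{lm:phisbp} (iv) supplies binary meets and De\,Morgan duality supplies joins, so the OMP of Lemma \ref{lm:PisanOMP} (v) is an OML. For the identity, the paper writes $\phi\sb{p}(q)=\phi\sb{p}(p\sp{\perp})\vee\phi\sb{p}(q)=\phi\sb{p}(p\sp{\perp}\vee q)$ using Theorem \ref{th:PreserveSup} (ii) (the Sasaki map preserves existing suprema), then applies the orthomodular law to rewrite $p\sp{\perp}\vee q$ as $p\sp{\perp}\vee(p\wedge(p\sp{\perp}\vee q))$ and preserves the supremum once more, finishing with parts (iii) and (ii) of Lemma \ref{lm:phisbp}. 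You instead apply the meet formula $p\wedge s=p-\phi\sb{p}(s\sp{\perp})$ twice --- once with $s=q\sp{\perp}$ and once with $s=p\sp{\perp}\vee q$ --- and use only De\,Morgan duality and the fixed-point property of Lemma \ref{lm:phisbp} (ii); the two displays then cancel algebraically. Your argument is slightly more elementary, as it bypasses both the orthomodular law and the sup-preservation theorem, at the cost of an extra appeal to the lattice structure just established (you need $p\sp{\perp}\vee q$ to exist before you can feed it into the meet formula, which your ordering of the two halves correctly provides). The paper's route, by contrast, illustrates how the identity falls out of the general residuation-style interplay between $\phi\sb{p}$ and suprema. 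Both are sound and rest only on results proved before this theorem.
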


\begin{proof}
Let $p,q\in P$. Then by Lemma \ref{lm:phisbp} (iv), $p\wedge q$ 
exists in $P$, so by De\,Morgan duality, $p\vee q=(p\sp{\perp}
\wedge q\sp{\perp})\sp{\perp}$ also exists in $P$. Therefore, 
$P$ is an OML. Also, as $p\sp{\perp}\leq p\sp{\perp}\vee q$, 
we have $p\sp{\perp}\vee q=p\sp{\perp}\vee(p\wedge(p\sp{\perp}
\vee q))$ by the orthomodular law; hence, by Theorem 
\ref{th:PreserveSup} (ii) and parts (iii) and (ii) of Lemma 
\ref{lm:phisbp},
\[ 
\phi\sb{p}(q)=\phi\sb{p}(p\sp{\perp})\vee\phi\sb{p}(q)=
 \phi\sb{p}(p\sp{\perp}\vee q)=\phi\sb{p}(p\sp{\perp}
 \vee(p\wedge(p\sp{\perp}\vee q)))
\]
\[ 
 =\phi\sb{p}(p\sp{\perp})\vee\phi\sb{p}(p\wedge(p\sp{\perp}\vee q))
 =p\wedge(p\sp{\perp}\vee q).\hspace{.5 in} \qedhere
\]
\end{proof}

Two elements $p$ and $q$ of an orthomodular lattice are said to 
be \emph{compatible} (or to \emph{commute}) iff $p=(p\wedge q)\vee 
(p\wedge q\sp{\perp})$ \cite[p. 20]{Kalm}. By a standard argument 
(e.g., \cite[Theorem 3.11]{FPMonotone}), if $p,q\in P$, then 
$p$ and $q$ are compatible in the foregoing sense iff $pCq$.

\section{Synaptic Versus GH-Algebras} \label{sc:SGH} 

Every generalized Hermitian (GH) algebra $G$ \cite[Definition 2.1]
{GHAlg1} is a synaptic algebra. Indeed, [SA1] follows from 
\cite[Theorem 4.2]{GHAlg1} and parts (ii), (iii), and (iv) of 
\cite[Definition 2.1]{GHAlg1} imply [SA2]--[SA5]. Also, [SA6] 
follows from \cite[Theorem 4.5]{GHAlg1}, \cite[Theorem 5.2]{GHAlg1} 
implies [SA7], and [SA8] is a consequence of \cite[Lemma 4.1]{GHAlg2}.
Finally, by \cite[Lemma 6.6 (iii)]{GHAlg1}, $G$ satisfies [SA9]; 
hence $G$ is a synaptic algebra.

By \cite[Definition 2.1 (vii)]{GHAlg1}, a generalized Hermitian 
algebra $G$ has the following \emph{commutative Vigier\,\footnote{See 
\cite[Section 5]{FSRI} for the origin of the terminology} property}:
 
\medskip

\noindent{\bf [CV]} \emph{Every bounded ascending sequence $g\sb{1}
\leq g\sb{2}\leq\cdots$ of pairwise commuting elements in $G$ 
has a supremum $g$ in $G$ and $g\in CC(\{g\sb{n}:n\in\Nat\})$}.

\medskip

\noindent Clearly, a synaptic algebra $A$ is a GH-algebra iff it 
satisfies [CV]. The condition [CV] is quite strong\,\footnote{For 
instance, as a consequence of [CV], the orthomodular lattice of 
projections in a GH-algebra is necessarily $\sigma$-complete 
\cite[Theorem 5.4]{GHAlg1}.} (see \cite[Section 4]{GHAlg1}), and the 
main impetus for the formulation in Definition \ref{df:SynapticAlgebra} 
is to replace [CV] by some of its algebraic consequences [SA6], [SA7], 
and [SA8], accompanied by the considerably weaker condition [SA9]. 

As an indication of the extent to which synaptic algebras generalize 
GH-algebras, we may consider the commutative case. The projections 
in a commutative GH-algebra form a $\sigma$-complete Boolean 
algebra; moreover, every $\sigma$-complete Boolean algebra can be   
realized as the (Boolean) lattice of projections in a commutative 
GH-algebra \cite[Theorem 5.7]{GHAlg2}. On the other hand, the 
projections in a commutative synaptic algebra form a Boolean algebra, 
which need not be $\sigma$-complete; moreover, \emph{every Boolean 
algebra $B$ can be realized as the {\rm (}Boolean{\rm )} lattice of 
projections in a commutative synaptic algebra.} Indeed, by Stone's 
theorem, $B$ can be represented as the field ${\mathcal F}$ of compact 
open subsets of a totally disconnected Hausdorff space $X$, and the 
projection lattice of the commutative synaptic algebra $A$ in 
Example \ref{ex:functions} is isomorphic to $B$.

\section{Invertible and Regular Elements} \label{sc:IRE} 

As we now show, the results in \cite[Section 4]{GHAlg2} pertaining 
to invertible and von Neumann regular elements of a GH-algebra $G$ 
go through for our synaptic algebra $A$, although we must be a 
little careful since the proof of \cite[Lemma 4.1]{GHAlg2} depends 
on the property [CV]. As usual, an element $a\in A$ is 
\emph{invertible} iff there exists a (necessarily unique) element 
$a\sp{-1}\in A$ such that $aa\sp{-1}=a\sp{-1}a=1$. If $a$ is 
invertible, it is clear that $a\sp{-1}\in CC(a)$ and that $a\dg=1$.   

\begin{lemma} \label{lm:AbsInv}
Let $a\in A$. Then: {\rm (i)} If $0\leq a$ and $a$ is 
invertible, then $0\leq a\sp{-1}$.  {\rm (ii)} $a$ is 
invertible iff $|a|$ is invertible, and if $a$ is invertible, 
then $|a|\sp{-1}=|a\sp{-1}|$. 
\end{lemma}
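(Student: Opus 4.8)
The plan is to handle part (i) first, since it is needed for part (ii). For part (i), I would observe that the candidate inverse is itself a value of an order-preserving quadratic map. Indeed, since $aa\sp{-1}=a\sp{-1}a=1$ and all products are computed in the associative algebra $R$, we have $J\sb{a\sp{-1}}(a)=a\sp{-1}aa\sp{-1}=a\sp{-1}$. By Theorem \ref{th:QuadLinOP} the map $J\sb{a\sp{-1}}$ is order preserving, so from $0\leq a$ we get at once $0\leq J\sb{a\sp{-1}}(a)=a\sp{-1}$. This is short and avoids any square-root manipulation.

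For part (ii), the organizing tool is the polar decomposition $a=\sgn(a)|a|=|a|\sgn(a)$, together with the identity $\sgn(a)\sp{2}=a\dg$ (Theorem \ref{th:PolarDecomposition}) and the fact $|a|\dg=a\dg$ (Corollary \ref{co:StrongAxiii} (ii)). The crux is this: if either $a$ or $|a|$ is invertible, the corresponding carrier equals $1$ (recall $a$ invertible $\Rightarrow a\dg=1$), and since $a\dg=|a|\dg$ we obtain $a\dg=|a|\dg=1$ in both cases, whence $\sgn(a)\sp{2}=1$. Thus $\sgn(a)$ is its own inverse, so multiplication by $\sgn(a)$ is a bijection interchanging $a$ and $|a|$. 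Concretely, assuming $a$ invertible I would propose $\sgn(a)a\sp{-1}$ as the inverse of $|a|$, and symmetrically $|a|\sp{-1}\sgn(a)$ as the inverse of $a$ when $|a|$ is invertible; the defining equations reduce to direct associative computations using $\sgn(a)\sp{2}=1$, e.g. $|a|(\sgn(a)a\sp{-1})=\sgn(a)a\sgn(a)a\sp{-1}=\sgn(a)\sp{2}aa\sp{-1}=1$.

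The one point that needs care---and which I expect to be the main obstacle---is confirming that these candidate inverses actually lie in the Jordan subalgebra $A$, and not merely in $R$. For this I would note that $\sgn(a)\in CC(a)$, so $\sgn(a)$ commutes with every element of $C(a)$; since $a\sp{-1}\in CC(a)\subseteq C(a)$, and since $|a|\sp{-1}\in C(a)$ as well (in the associative algebra $R$, $|a|$ commutes with $a$ and is invertible, hence $|a|\sp{-1}$ commutes with $a$), the products $\sgn(a)a\sp{-1}$ and $|a|\sp{-1}\sgn(a)$ are products of commuting elements and therefore lie in $A$ by Remarks \ref{rm:Jordan}. This establishes the equivalence together with the auxiliary identity $a\sp{-1}=|a|\sp{-1}\sgn(a)$.

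Finally, for the formula $|a|\sp{-1}=|a\sp{-1}|$ (assuming $a$, and hence $|a|$, invertible), I would avoid re-routing through the polar decomposition and instead compute squares. In $R$ we have $(a\sp{-1})\sp{2}=(a\sp{2})\sp{-1}$, and since $a\sp{2}=|a|\sp{2}$ by Remarks \ref{rm:AbsProps}, this equals $(|a|\sp{2})\sp{-1}=(|a|\sp{-1})\sp{2}$. By part (i) applied to $|a|$ we have $0\leq|a|\sp{-1}$, so $|a|\sp{-1}$ is a nonnegative element whose square is $(a\sp{-1})\sp{2}$; by the uniqueness of positive square roots (Theorem \ref{th:UniqueSR}) and the definition $|a\sp{-1}|:=((a\sp{-1})\sp{2})\sp{1/2}$, we conclude $|a\sp{-1}|=|a|\sp{-1}$.
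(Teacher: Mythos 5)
Your argument is correct and follows essentially the same route as the paper: both hinge on the polar decomposition $a=\sgn(a)|a|$, the identity $\sgn(a)\sp{2}=a\dg=1$, and the candidate inverse $\sgn(a)a\sp{-1}$ for $|a|$ (resp.\ $|a|\sp{-1}\sgn(a)$ for $a$), with $|a|\sp{-1}=|a\sp{-1}|$ obtained from uniqueness of positive square roots. The only cosmetic difference is in part (i), where the paper applies [SA2] to the commuting positive pair $a$ and $(a\sp{-1})\sp{2}$ to get $0\leq a(a\sp{-1})\sp{2}=a\sp{-1}$, whereas you invoke the order-preservation of $J\sb{a\sp{-1}}$ from Theorem \ref{th:QuadLinOP}; both are valid one-line arguments.
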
 

\begin{proof}
(i) Suppose $0\leq a$ and $a$ is invertible. As $aC(a\sp{-1})
\sp{2}$ and $0\leq(a\sp{-1})\sp{2}$, [SA2] implies that 
$0\leq a(a\sp{-1})\sp{2}=a\sp{-1}$.

(ii) Let $s :=\sgn(a)$. By Theorem \ref{th:PolarDecomposition}, 
$s\in CC(a)$, $s\sp{2}=a\dg$, $sa=as=|a|$, and $s|a|=|a|s=a$. 
Suppose $a$ is invertible. As $s\in CC(a)$, we have $sCa\sp{-1}$ 
and $|a|(sa\sp{-1})=(sa\sp{-1})|a|=1$; hence $|a|$ is invertible 
and $|a|\sp{-1}=sa\sp{-1}$. Also, $s\sp{2}=a\dg=1$, and by (i), 
$0\leq sa\sp{-1}$. But, $(sa\sp{-1})\sp{2}=s\sp{2}(a\sp{-1})\sp{2}=
(a\sp{-1})\sp{2}$, whence $|a|\sp{-1}=sa\sp{-1}=|sa\sp{-1}|=
|a\sp{-1}|$. Conversely, if $|a|$ is invertible, it is clear that 
$a$ is invertible with $a\sp{-1}=s|a|\sp{-1}$.
\end{proof}

\begin{theorem} \label{th:Invertibility}
If $a\in A$, then $a$ is invertible iff there exists $0<\epsilon
\in\reals$ such that $\epsilon\leq|a|$.
\end{theorem}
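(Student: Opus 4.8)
The plan is to funnel everything through the positive element $|a|$, using Lemma \ref{lm:AbsInv} (ii), which says that $a$ is invertible iff $|a|$ is invertible. Hence it suffices to establish, for the positive element $|a|$, the equivalence: $|a|$ is invertible iff $\epsilon\leq|a|$ for some $0<\epsilon\in\reals$. Each direction will then be a short argument using one of the defining axioms.

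For the implication ($\Leftarrow$), I would start from $\epsilon\leq|a|$ with $0<\epsilon$. Multiplying this inequality by the positive scalar $\epsilon\sp{-1}$ gives $1\leq\epsilon\sp{-1}|a|$, so [SA8] furnishes an element $d\in A$ with $(\epsilon\sp{-1}|a|)d=d(\epsilon\sp{-1}|a|)=1$. Then $\epsilon\sp{-1}d$ is a two-sided inverse of $|a|$, so $|a|$ is invertible, and therefore $a$ is invertible by Lemma \ref{lm:AbsInv} (ii). This direction is essentially a direct application of [SA8].

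For ($\Rightarrow$), suppose $a$—equivalently $|a|$—is invertible, and set $c :=|a|\sp{-1}$. Since $0\leq|a|$ and $|a|$ is invertible, Lemma \ref{lm:AbsInv} (i) gives $0\leq c$; moreover $c\in CC(|a|)$, so $cC|a|$ and $|a|c=c|a|=1$. Because $0\leq c\leq\|c\|$, the element $\|c\|-c$ is positive and commutes with the positive element $|a|$, so [SA2] yields $0\leq|a|(\|c\|-c)=\|c\|\,|a|-|a|c=\|c\|\,|a|-1$, i.e.\ $1\leq\|c\|\,|a|$. Since $|a|c=1\neq 0$ forces $c\neq 0$ and hence $\|c\|>0$, dividing by $\|c\|$ gives $\epsilon :=\|c\|\sp{-1}\leq|a|$, as required.

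Both directions are short, so there is no deep obstacle here; the points that need care are the reduction to $|a|$ via Lemma \ref{lm:AbsInv} and the verification that the scalar $\|c\|$ is strictly positive, which rests on $1\neq 0$. The crux of the nontrivial direction is recognizing that the inverse of a positive invertible element is again positive and bounded above by its norm, so that [SA2] converts this upper bound into the desired strictly positive lower bound for $|a|$.
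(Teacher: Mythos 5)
Your proposal is correct and follows essentially the same route as the paper: the reverse direction is the identical application of [SA8] to $\epsilon\sp{-1}|a|$, and the forward direction differs only cosmetically in that you bound $|a|\sp{-1}$ above by its norm $\|c\|$ (checking $\|c\|>0$) where the paper uses an integer $n$ with $|a|\sp{-1}\leq n$ from the order-unit property, before applying [SA2] in the same way.
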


\begin{proof}
Suppose first that $a$ is invertible. Then, by Lemma \ref{lm:AbsInv}, 
$|a|$ is invertible. As $1$ is an order unit, there exists 
$n\in\Nat$ such that $|a|\sp{-1}\leq n$, and since $|a|$ commutes 
with $n-|a|\sp{-1}$, [SA2] implies that $0\leq(n-|a|\sp{-1})|a|$, 
i.e., $1\leq n|a|$. Consequently, with $0<\epsilon :=1/n$, we have 
$\epsilon\leq|a|$.

Conversely, suppose $0<\epsilon\leq|a|$. Then $1\leq\epsilon
\sp{-1}|a|$; hence by [SA8], $\epsilon\sp{-1}|a|$ is invertible, 
and it follows that $|a|$ is invertible with $|a|\sp{-1}=
\epsilon\sp{-1}(\epsilon\sp{-1}|a|)\sp{-1}$. Thus $a$ is 
invertible by Lemma \ref{lm:AbsInv}.
\end{proof}

\begin{definition} \label{df:Regular}
Let $a\in A$. (i) $a$ is \emph{von Neumann regular} iff there 
exists $b\in A$ such that $ab,ba\in A$ and $aba=a$. (ii) $a$ is 
\emph{regular} iff there exists $0<\epsilon\in\reals$ such that 
$\epsilon a\dg\leq|a|$.
\end{definition}

Obviously, $0$ is both von Neumann regular and regular. The proof 
of the following theorem is virtually identical\,\footnote{Note 
that $a\dg Aa\dg$ is a synaptic algebra by Theorem \ref{th:vAv}.} 
to the proof of \cite[Theorem 4.5]{GHAlg2}.

\begin{theorem} \label{th:Regularity}
If $0\not=a\in A$, then the following conditions are mutually equivalent: 
{\rm (i)} $a$ is von Neumann regular. {\rm (ii)} There exists $r\in a
\dg Aa\dg$ such that $ar=ra=a\dg$. {\rm (iii)} $a$ is invertible in 
the synaptic algebra $a\dg Aa\dg$. 
{\rm (iv)} $a$ is regular.
\end{theorem}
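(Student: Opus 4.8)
The plan is to establish the cycle of implications (i)$\Rightarrow$(iv)$\Rightarrow$(iii)$\Rightarrow$(ii)$\Rightarrow$(i), working throughout inside the synaptic algebra $v := a\dg$ and $vAv$ supplied by Theorem \ref{th:vAv}, whose unit is $a\dg$. The key observation that ties everything together is that $a$ lives in $vAv$ (since $a a\dg = a\dg a = a$ by Lemma \ref{lm:ab=0} (iii)), and that condition (iv), namely the existence of $0<\epsilon$ with $\epsilon a\dg \leq |a|$, is exactly the hypothesis needed to invoke Theorem \ref{th:Invertibility} \emph{within} $vAv$: there, $a\dg$ plays the role of the unit, and $\epsilon a\dg \leq |a|$ says precisely that $|a|$ is bounded below by a positive scalar multiple of the unit of $vAv$. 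Because $|a|$ computed in $A$ agrees with the absolute value computed in $vAv$ (Theorem \ref{th:vAv} guarantees $|a|\in vAv$ and that square roots are inherited), the equivalence (iii)$\Leftrightarrow$(iv) will follow directly by applying Theorem \ref{th:Invertibility} inside $vAv$.

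First I would prove (iii)$\Rightarrow$(ii)$\Rightarrow$(i). If $a$ is invertible in $vAv$, let $r$ be its inverse there, so $ar=ra=a\dg$ with $r\in vAv = a\dg A a\dg$; this is (ii). Then $ara = (ar)a = a\dg a = a$, and since $ar = ra = a\dg \in A$, the element $a$ is von Neumann regular in the sense of Definition \ref{df:Regular} (i), giving (i). These steps are purely formal. For (iii)$\Rightarrow$(iv), invertibility of $a$ in $vAv$ gives invertibility of $|a|$ in $vAv$ by Lemma \ref{lm:AbsInv} applied inside $vAv$, and then the first half of the proof of Theorem \ref{th:Invertibility} (again applied in $vAv$, where $a\dg$ is the order unit) produces $0<\epsilon$ with $\epsilon a\dg \leq |a|$, which is (iv).

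The more delicate direction is (i)$\Rightarrow$(iv), and I expect this to be the main obstacle. The difficulty is that von Neumann regularity only provides \emph{some} $b\in A$ with $aba=a$ and $ab,ba\in A$, with no control on $b$ and no a priori relationship between $b$ and the carrier $a\dg$; I must extract from this a \emph{quantitative} lower bound $\epsilon a\dg \leq |a|$. The natural strategy is to replace $b$ by a well-behaved substitute: pass to the polar decomposition $a = \sgn(a)|a|$ (Theorem \ref{th:PolarDecomposition}) so that controlling $|a|$ suffices, and then manipulate $aba=a$ to show $|a|$ is bounded below. Concretely, from $aba=a$ one can form a symmetric positive element and argue that $|a|$, restricted to its carrier $a\dg$, cannot have spectrum accumulating at $0$; the cleanest route is to show $|a|$ is invertible in $vAv$, at which point Theorem \ref{th:Invertibility} inside $vAv$ yields the $\epsilon$. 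Since the paper explicitly notes that this proof is ``virtually identical to the proof of \cite[Theorem 4.5]{GHAlg2},'' the intended argument surely mirrors that one, with the sole new subtlety being to verify that each step is legitimate in $vAv$ without appeal to the strong completeness axiom [CV]; Theorem \ref{th:vAv} is exactly what licenses this, since it certifies $vAv$ is itself a synaptic algebra and hence satisfies [SA1]--[SA9].
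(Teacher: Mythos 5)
Your overall architecture---pass to the synaptic algebra $a\dg Aa\dg$ via Theorem \ref{th:vAv}, observe that its unit is $a\dg$ and that $|a|$ is unambiguous by uniqueness of square roots, and apply Theorem \ref{th:Invertibility} there---is exactly the route the paper intends (its own ``proof'' is a one-line deferral to \cite[Theorem 4.5]{GHAlg2}, plus the footnote that $a\dg Aa\dg$ is a synaptic algebra). Your implications (iii)$\Rightarrow$(ii)$\Rightarrow$(i) and the equivalence (iii)$\Leftrightarrow$(iv) are correct as written.

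The genuine gap is the implication out of (i). You explicitly flag it as ``the main obstacle,'' aim it at the quantitative condition (iv), and offer only a heuristic about the spectrum of $|a|$ not accumulating at $0$, backed by an appeal to the cited reference; that is not a proof, and the target is also mischosen, since from von Neumann regularity the natural (and easy) thing to derive is (ii), after which the $\epsilon$ in (iv) comes for free from your own equivalence (iii)$\Leftrightarrow$(iv). The missing argument is short and purely algebraic, and it hinges on the clause $ab,ba\in A$ in Definition \ref{df:Regular} (i), which you never use. Suppose $aba=a$ with $ab,ba\in A$. Then $(ab)^{2}=(aba)b=ab$, so $e:=ab$ is an idempotent element of $A$, i.e.\ $e\in P$; likewise $f:=ba\in P$. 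Since $ea=a$ and $af=a$, Lemma \ref{lm:ab=0} (ii) gives $ae=a$ and $fa=a$, whence $a\dg\leq e$ and $a\dg\leq f$ by Theorem \ref{th:Carrier} (iii), and therefore $a\dg e=ea\dg=a\dg$ and $a\dg f=fa\dg=a\dg$ by Theorem \ref{th:e<=p}. Now put $r:=a\dg b\,a\dg=J_{a\dg}(b)\in a\dg Aa\dg$. Using $aa\dg=a\dg a=a$ one computes $ar=(aa\dg)ba\dg=(ab)a\dg=ea\dg=a\dg$ and $ra=a\dg b(a\dg a)=a\dg(ba)=a\dg f=a\dg$, which is precisely (ii). Without some such step the cycle of implications is not closed, so as it stands the proposal does not establish the theorem.
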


\begin{corollary} \label{co:invert}
If $a\in A$, then $a$ is invertible iff $a$ is regular and $a\dg=1$.
\end{corollary}

If $0\not=a\in A$ and $a$ is regular, then the (necessarily unique) 
inverse of $a$ in $a\dg Aa\dg$ (Theorem \ref{th:Regularity}) is 
called the \emph{pseudo-inverse} of $a$ in $A$, and by definition, 
the pseudo-inverse of $0$ is $0$. If $a$ is regular, it is not 
difficult to show that the pseudo-inverse of $a$ belongs to $CC(a)$.

\begin{theorem} \label{th:RegPosNeg}
If  $a\in A$, then $a$ is regular iff both $a\sp{+}$ and 
$a\sp{-}$ are regular.
\end{theorem}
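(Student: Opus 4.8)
The plan is to exploit the orthogonal decomposition from Theorem \ref{th:Prop a+a-}. I would write $p :=(a\sp{+})\dg$ and $q :=(a\sp{-})\dg$; by parts (vii) and (viii) of that theorem, $pq=qp=0$ and $a\dg=p+q$, while by Remarks \ref{rm:AbsProps} we have $|a|=a\sp{+}+a\sp{-}$ and $a\sp{+}a\sp{-}=0$. Since $0\leq a\sp{+}$, uniqueness of square roots (Theorem \ref{th:UniqueSR}) gives $|a\sp{+}|=a\sp{+}$, so the regularity of $a\sp{+}$ amounts to the existence of $0<\epsilon\sb{1}\in\reals$ with $\epsilon\sb{1}p\leq a\sp{+}$; similarly $a\sp{-}$ is regular iff $\epsilon\sb{2}q\leq a\sp{-}$ for some $0<\epsilon\sb{2}\in\reals$. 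The case $a=0$ is trivial, so I assume $a\not=0$.

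For the forward implication, I would apply the compressions $J\sb{p}$ and $J\sb{q}$ to the defining inequality $\epsilon a\dg\leq|a|$. Using $p\sp{2}=p$, $pq=0$, and the carrier identities (Lemma \ref{lm:ab=0} (iii) gives $pa\sp{+}=a\sp{+}p=a\sp{+}$, while $a\sp{+}a\sp{-}=0$ forces $pa\sp{-}=0$), one computes $J\sb{p}(a\dg)=J\sb{p}(p+q)=p$ and $J\sb{p}(|a|)=J\sb{p}(a\sp{+}+a\sp{-})=a\sp{+}$. Since $J\sb{p}$ is order preserving (Theorem \ref{th:QuadLinOP}), applying it to $\epsilon a\dg\leq|a|$ yields $\epsilon p\leq a\sp{+}$, so $a\sp{+}$ is regular; the symmetric computation with $J\sb{q}$ gives $\epsilon q\leq a\sp{-}$, so $a\sp{-}$ is regular.

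For the converse, I would take $\epsilon\sb{1}p\leq a\sp{+}$ and $\epsilon\sb{2}q\leq a\sp{-}$ and set $\epsilon :=\min\{\epsilon\sb{1},\epsilon\sb{2}\}>0$. Because $0\leq p,q$, we have $\epsilon p\leq\epsilon\sb{1}p\leq a\sp{+}$ and $\epsilon q\leq\epsilon\sb{2}q\leq a\sp{-}$, and adding these yields $\epsilon(p+q)\leq a\sp{+}+a\sp{-}$, i.e.\ $\epsilon a\dg\leq|a|$, so $a$ is regular.

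The only delicate point is verifying the four compression identities $J\sb{p}(p)=p$, $J\sb{p}(q)=0$, $J\sb{p}(a\sp{+})=a\sp{+}$, and $J\sb{p}(a\sp{-})=0$ (and their $q$-analogues); each follows at once from the orthogonality relations $pq=0$ and $a\sp{+}a\sp{-}=0$ together with the carrier property, so I expect no real obstacle once the decompositions $a\dg=p+q$ and $|a|=a\sp{+}+a\sp{-}$ are in hand.
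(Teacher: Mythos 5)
Your proposal is correct and follows essentially the same route as the paper: decompose via $p=(a\sp{+})\dg$, $q=(a\sp{-})\dg$ with $p+q=a\dg$ and $|a|=a\sp{+}+a\sp{-}$, extract $\epsilon p\leq a\sp{+}$ from $\epsilon a\dg\leq|a|$, and reverse by adding the two inequalities with $\epsilon=\min\{\alpha,\beta\}$. The only cosmetic difference is that you apply the compression $J\sb{p}$ (order-preserving by Theorem \ref{th:QuadLinOP}) where the paper multiplies the inequality on the left by the commuting projection $p$; since $pxp=px$ for the elements involved, these are the same step.
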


\begin{proof}
Let $p :=(a\sp{+})\dg$ and $q :=(a\sp{-})\dg$. Then by Theorem 
\ref{th:Prop a+a-}, $p,q\in CC(a)$, $p+q=a\dg$, $pq=pa\sp{-}=0$, 
$qp=qa\sp{+}=0$, $pa=pa\sp{+}=a\sp{+}$, and $qa=qa\sp{-}=
a\sp{-}$.

Suppose that $a$ is regular. Then there exists $0<\epsilon\in
\reals$ with $\epsilon(p+q)=\epsilon a\dg\leq|a|=a\sp{+}+
a\sp{-}$, so $\epsilon p=p(\epsilon(p+q))\leq p(a\sp{+}+a
\sp{-})=a\sp{+}=|a\sp{+}|$,  whence $a\sp{+}$ is regular. 
Likewise, $\epsilon q\leq a\sp{-}$, so $a\sp{-}$ is regular. 
Conversely, if both $a\sp{+}$ and $a\sp{-}$ are regular, there 
exist $0<\alpha,\beta$ such that $\alpha p\leq a\sp{+}$ and 
$\beta q\leq a\sp{-}$; hence with $\epsilon :=\min\{\alpha,\beta\}$, 
we have $\epsilon a\dg=\epsilon(p+q)\leq a\sp{+}+a\sp{-}=|a|$, and 
it follows that $a$ is regular.
\end{proof}

\begin{corollary} \label{co:Invertible}
$a\in A$ is invertible iff $a\dg=1$ and both $a\sp{+}$ and 
$a\sp{-}$ are regular.
\end{corollary}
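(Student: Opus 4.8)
The plan is to combine Corollary \ref{co:invert} with Theorem \ref{th:RegPosNeg}. Corollary \ref{co:invert} already tells us that $a$ is invertible if and only if $a$ is regular \emph{and} $a\dg=1$; this is the pivotal earlier result, so the whole task reduces to rewriting the condition ``$a$ is regular'' using Theorem \ref{th:RegPosNeg}.

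First I would invoke Corollary \ref{co:invert} to replace invertibility of $a$ by the conjunction of $a\dg=1$ and regularity of $a$. Next I would apply Theorem \ref{th:RegPosNeg}, which asserts that $a$ is regular precisely when both $a\sp{+}$ and $a\sp{-}$ are regular. Substituting this equivalence into the first statement immediately yields that $a$ is invertible iff $a\dg=1$ and both $a\sp{+}$ and $a\sp{-}$ are regular, which is exactly the claim.

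I do not expect any genuine obstacle here, since both ingredients are already established in the excerpt and the argument is a direct substitution of one equivalence into another. The only point deserving a moment's care is that the two conditions being combined---$a\dg=1$ and regularity---are logically independent clauses joined by ``and,'' so replacing the regularity clause by its equivalent formulation preserves the conjunction without any interaction between the clauses. A one-line proof suffices:

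\begin{proof}
By Corollary \ref{co:invert}, $a$ is invertible iff $a$ is regular and $a\dg=1$. By Theorem \ref{th:RegPosNeg}, $a$ is regular iff both $a\sp{+}$ and $a\sp{-}$ are regular. Combining these two equivalences, $a$ is invertible iff $a\dg=1$ and both $a\sp{+}$ and $a\sp{-}$ are regular.
\end{proof}
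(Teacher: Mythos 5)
Your proof is correct and is exactly the argument the paper intends: the corollary is stated without proof precisely because it follows by substituting the equivalence of Theorem \ref{th:RegPosNeg} into Corollary \ref{co:invert}, which is what you do.
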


\section{Spectral Resolution} \label{sc:Spectral} 

In this section, we show that the synaptic algebra $A$ is a 
so-called \emph{spectral order-unit normed space}; hence the 
results of \cite{SOUS} are at our disposal. In particular, 
every element in $A$ both determines and is determined by a 
family of projections---its \emph{spectral resolution}.

As per \cite[Definition 1.5 (i)]{SOUS}, an element $a\in A$ 
is \emph{compatible} with a projection $p\in P$ iff $a=
J\sb{p}(a)+J\sb{1-p}(a)$. Thus, by Lemma \ref{lm:CpNormClosed} 
(i), $C(p)$ is the set of all elements of $A$ that are 
compatible with $p$; hence, the notation used in 
\cite[Definition 1.5 (i) and ff.]{SOUS} is consistent with our 
notation in this article.

\begin{theorem} \label{th:CompressionBase}
The family $\cb$ is a spectral compression base {\rm\cite[Definition 1.7]
{SOUS}} for the order-unit space $A$.
\end{theorem}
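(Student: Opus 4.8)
The plan is to verify, clause by clause, the conditions that make up \cite[Definition 1.7]{SOUS}, which split into two parts: those asserting that $\cb$ is a compression base, and the extra clauses expressing spectrality. The ambient order-unit structure is supplied by [SA1], and by Lemma \ref{lm:CpNormClosed}(i) the elements ``compatible'' with a projection $p$ in the sense of \cite[Definition 1.5 (i)]{SOUS} are precisely the members of $C(p)$, so the compatibility notion of \cite{SOUS} coincides with ours. First I would check that each $J\sb{p}$ is a compression with focus $p$: it is linear and order preserving by Theorem \ref{th:QuadLinOP}, it is idempotent since $p=p\sp{2}$ gives $J\sb{p}(J\sb{p}(a))=p(pap)p=pap=J\sb{p}(a)$, and $J\sb{p}(1)=p$; Theorem \ref{th:Compression} then confirms that these are exactly the compressions attached to $P$, so $p\mapsto J\sb{p}$ indexes them by their foci.

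Second, I would dispatch the combinatorial axioms of a compression base using the projection calculus already in hand. By Theorem \ref{th:PisanOML} the index set $P$ is an orthomodular lattice under $p\sp{\perp}:=1-p$, giving the required orthocomplemented poset. The composition laws are immediate: for $p\leq q$ we have $pq=qp=p$ by Theorem \ref{th:e<=p}, hence $J\sb{p}J\sb{q}(a)=p(qaq)p=pap=J\sb{p}(a)$ and symmetrically $J\sb{q}J\sb{p}=J\sb{p}$; for $p\perp q$ we have $pq=0$ by Lemma \ref{lm:PisanOMP}(ii), hence $J\sb{p}J\sb{q}=J\sb{q}J\sb{p}=0$, while $p\vee q=p+q$ by Lemma \ref{lm:PisanOMP}(iii). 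The boundary values $J\sb{0}=0$ and $J\sb{1}=\operatorname{id}$ are clear. These steps are routine consequences of Sections \ref{sc:SRAB} and \ref{sc:OML}.

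The substantive part is the spectrality clause, which demands that each $a\in A$ possess spectral projections lying in $P$ and compatible with $a$. Here I would feed in the polar data of Theorem \ref{th:Prop a+a-}: putting $p:=(a\sp{+})\dg$ one gets $p\in CC(a)\subseteq C(a)$ with $J\sb{p}(a)=pap=a\sp{+}$ and $J\sb{1-p}(a)=-a\sp{-}$, so $p$ is the sign projection that splits $a$ into its positive and negative parts through the compressions. Applying this to $a-\lambda 1$ for each real $\lambda$ yields a spectral family $p\sb{\lambda}:=1-((a-\lambda 1)\sp{+})\dg$; each $p\sb{\lambda}$ lies in $CC(a)$, hence is compatible with $a$, monotonicity in $\lambda$ follows from Theorem \ref{th:Carrier}(viii), and the one-sided continuity and supremum behaviour needed to identify this family as a genuine spectral resolution come from Theorem \ref{th:AscendingLimit} and the limit axiom [SA9], channelled through the carrier properties in Theorems \ref{th:phiProps} and \ref{th:PreserveSup}.

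The main obstacle I anticipate is the compatibility requirement rather than any isolated computation: one must be sure the candidate spectral projections genuinely lie in $C(a)$ (equivalently are fixed by the map $c\sb{p}$ of Lemma \ref{lm:CpNormClosed}) and bracket $a$ correctly between consecutive levels $\lambda$. This is exactly what the carrier results $a\dg\in CC(a)$ (Theorem \ref{th:Carrier}(vi)) and $p,q\in CC(a)$ (Theorem \ref{th:Prop a+a-}(i)) are built to guarantee, so I expect the proof to consist mainly of marshalling these facts and matching them to the individual clauses of \cite[Definition 1.7]{SOUS}, with the [SA9]-type passage to limits as the only genuinely analytic ingredient.
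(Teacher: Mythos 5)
Your proposal correctly identifies the one genuinely substantive clause and handles it the same way the paper does: the comparability property is witnessed by $p:=(a\sp{+})\dg$, using Theorem \ref{th:Prop a+a-} to get $p\in CC(a)\subseteq C(a)$, $J\sb{p}(a)=a\sp{+}\geq 0$ and $J\sb{1-p}(a)=-a\sp{-}\leq 0$. However, the remaining clauses you verify are not the ones that \cite[Definition 1.7]{SOUS} actually demands, and two required items are missing. First, a compression base in the sense of \cite{FCBinUG} requires $P$ to be a \emph{normal} sub-effect algebra of $E$: besides $0,1\in P$, closure under $p\mapsto 1-p$, and $p+q\in P$ when $p\perp q$, one must check that if $d,e,f,d+e+f\in E$ with $d+e\in P$ and $d+f\in P$, then $d\in P$. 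This is not a formal triviality; the paper proves it by showing $eq=0$ and $dq=d$ via Theorem \ref{th:e<=p} and then identifying $d=(d+e)\wedge(d+f)$ by Lemma \ref{lm:PisanOMP}(i). Second, the defining identity of a compression base is not your pair of composition laws for comparable and orthogonal foci, but rather $J\sb{p+r}\circ J\sb{q+r}=J\sb{r}$ whenever $p+q+r\leq 1$; this needs the computation $(p+r)(q+r)a(q+r)(p+r)=rar$. Third, you omit the projection cover property (\cite[Definition 1.4]{SOUS}): every effect $e$ has a smallest projection above it, namely $e\dg$ by Theorem \ref{th:Carrier}(iv). Without these three items the theorem is not proved.

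Conversely, the bulk of your third and fourth paragraphs --- constructing the family $p\sb{\lambda}=1-((a-\lambda)\sp{+})\dg$, and arguing its monotonicity, one-sided continuity, and supremum behaviour via Theorem \ref{th:AscendingLimit} and [SA9] --- is not part of what must be verified here. Those are exactly the conclusions that \cite{SOUS} \emph{derives} once $\cb$ is known to be a spectral compression base (they appear in the paper as Theorem \ref{th:SpecProps}, quoted from \cite[Theorems 3.3, 3.5, 3.6]{SOUS} after the present theorem is in hand). So your effort is concentrated on consequences rather than hypotheses: the genuinely analytic ingredient of this theorem is only the comparability property, which you already have, plus the effect-algebra bookkeeping listed above.
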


\begin{proof}
To begin with, we have to show that $P$ is a normal sub-effect algebra 
of $E$ (\cite[Definition 1]{FCBinUG}). Of course, $0,1\in P$, and 
$p\in P\Rightarrow 1-p=p\sp{\perp}\in P$. Also, if $p,q\in P$ with 
$p+q\leq 1$, then $p+q=p\vee q\in P$ by Lemma \ref{lm:PisanOMP} (iii). 
Therefore $P$ is a sub-effect algebra of $E$. Suppose that 
$d,e,f,d+e+f\in E$ with $p :=d+e\in P$ and $q :=d+f\in P$. Then 
$e+q=d+e+f\leq 1$, so $e\leq 1-q$, and therefore by Theorem 
\ref{th:e<=p}, $e=e(1-q)$, i.e., $eq=0$. Also, $d\leq d+f=q$, 
so $dq=d$ by Theorem \ref{th:e<=p}, and it follows that $pq=(d+e)q
=dq=d$. By symmetry, $qp=d$; hence by Lemma \ref{lm:PisanOMP} (i), 
$d=p\wedge q\in P$, and it follows that $P$ is a normal sub-effect 
algebra of $E$.

Now let $p,q,r\in P$ with $p+q+r\leq 1$. Then $pq=pr=qr=0$, $p+r=
p\vee r\in P$ and $q+r=q\vee r\in P$, whence, for all $a\in A$,
\[
J\sb{p+r}(J\sb{q+r}(a))=(p+r)(q+r)a(q+r)(p+r)=rar=J\sb{r}(a),
\]
and it follows that $\cb$ is a compression base for $A$ 
(\cite[Definition 2]{FCBinUG}).

If $e\in E$, then by Theorem \ref{th:Carrier} (vii), $e\dg$ is 
the smallest projection $p$ such that $e\leq p$\,; hence the 
compression base $\cb$ has the projection cover property 
(\cite[Definition 1.4]{SOUS}).

Let $a\in A$ and let $p :=(a\sp{+})\dg$. Then by parts (i), 
(iii), and (vi) of Theorem \ref{th:Prop a+a-}, we have 
$C(a)\subseteq C(p)$, $J\sb{p}(a)=pap=pa=a\sp{+}\geq 0$, 
and $J\sb{1-p}(a)=(1-p)a(1-p)=(1-p)a=a-pa=a-a\sp{+}=
-a\sp{-}\leq 0$. Thus, the compression base $\cb$ 
has the comparability property (\cite[Definition 1.6]{SOUS}),
and therefore $\cb$ is a spectral compression base for $A$.
\end{proof}

If $a\in A$, it is clear that, for all $p\in P$, 
\[
p\leq 1-a\dg\Leftrightarrow a\dg p=0\Leftrightarrow ap=pa=0
 \Leftrightarrow a\in C(p)\text{\ with\ }J\sb{p}(a)=pap=0.
\]
Therefore, as per \cite[Theorem 2.1 and ff.]{SOUS}, the mapping 
$'\colon A\to P$ defined by $a\,' :=1-a\dg$ for all $a\in A$ 
is effective as the \emph{Rickart mapping} on $A$. We note that, 
for $p\in P$, we have $p\,'=1-p=p\sp{\perp}$.  

Let $a,b\in A$. In \cite{SOUS} the notation $b\in CPC(a)$ means  
that, for all $p\in P$, $a\in C(p)\Rightarrow b\in C(p)$. Thus, 
$b\in CPC(a)\Leftrightarrow C(a)\cap P\subseteq C(b)$; hence, 
$CC(a)\subseteq CPC(a)$. For instance, by \cite[Lemmas 2.1 (vi) 
and 2.4 (iv)]{SOUS}, $a\dg, |a|, a\sp{+}\in CPC(a)$, but for our 
synaptic algebra $A$, we have the (possibly) stronger conditions 
$a\dg,\,|a|,\,a\sp{+}\in CC(a)$. 

In view of the remarks above, we can translate the results in 
\cite{SOUS} into our present formalism by replacing $a\,'$ by 
$1-a\dg$, $a\,''$ by $a\dg$, and $p\,'$ by $p\sp{\perp}=1-p$ 
for all $a\in A$ and all $p\in P$. Moreover, if $aCp$, we can 
replace $J\sb{p}(a)$ by $pa$ (or by $ap$). 
  
\begin{definition} \label{df:SpecRes}
Let $a\in A$ and $\lambda\in\reals$. Then:
\begin{enumerate}
\item The \emph{spectral lower and upper bounds} $L$ and $U$ for $a$ 
 are defined by $L :=\sup\{\lambda\in\reals:\lambda\leq a\}$ and 
 $U :=\inf\{\lambda\in\reals: a\leq\lambda\}$. 
\item The family of projections $(p\sb{\lambda})\sb{\lambda\in\reals}$ 
 defined by $p\sb{\lambda} :=1-((a-\lambda)\sp{+})\dg$ is called the 
 \emph{spectral resolution} of $a$. 
\item The family of projections $(d\sb{\lambda})\sb{\lambda\in\reals}$ 
 defined by $d\sb{\lambda} :=1-(a-\lambda)\dg$ is called the family 
 of \emph{eigenprojections} of $a$. If $d\sb{\lambda}\not=0$, then 
 $\lambda$ is called an \emph{eigenvalue} of $a$.
\end{enumerate}
\end{definition}

\begin{assumptions} \label{as:SRofa}
In what follows: $a\in A$; $L$ and $U$ are the spectral bounds 
for $a$; $(p\sb{\lambda})\sb{\lambda\in\reals}$ is the spectral 
resolution of $a$; and $(d\sb{\lambda})\sb{\lambda\in\reals}$ is 
the family of eigenprojections for $a$.
\end{assumptions}

By \cite[Theorem 3.1]{SOUS}, $-\infty<L\leq U<\infty$,  
$\|a\|=\max\{|L|,|U|\}$, and $L\leq a\leq U$. The following 
theorem is a consequence of \cite[Theorems 3.3, 3.5, and 3.6]{SOUS}. 

\begin{theorem}  \label{th:SpecProps}
For all $\lambda,\mu\in\reals$:
\begin{enumerate}
\item $p\sb{\lambda},\, d\sb{\lambda}\in CC(a)$; hence $p\sb{\lambda}
 Cp\sb{\mu}$, $p\sb{\lambda}Cd\sb{\mu}$, and $d\sb{\lambda}Cd\sb{\mu}$.

\item $p\sb{\lambda}(a-\lambda)\leq 0\leq(1-p\sb{\lambda})
 (a-\lambda)$. 

\item $\lambda\leq\mu\Rightarrow p\sb{\lambda}\leq p\sb{\mu}$ and
 $p\sb{\mu}-p\sb{\lambda}=p\sb{\mu}\wedge(1-p\sb{\lambda})$.

\item $\lambda<\mu\Rightarrow d\sb{\lambda}\leq p\sb{\lambda}\leq
 1-d\sb{\mu}\Rightarrow d\sb{\lambda}\perp d\sb{\mu}$.

\item $\mu\geq U\Leftrightarrow p\sb{\mu}=1$.

\item $\lambda<L\Rightarrow p\sb{\lambda}=0$, and $L<\lambda\Rightarrow
 0<p\sb{\lambda}$.

\item If $\alpha\in\reals$, then $p\sb{\alpha}=\bigwedge\{p\sb{\mu}:
\alpha<\mu\in\reals\}$.

\item If $\alpha\in\reals$, then $p\sb{\alpha}-d\sb{\alpha}=\bigvee 
\{p\sb{\lambda}: \alpha>\lambda\in\reals\}$.
\end{enumerate}
\end{theorem}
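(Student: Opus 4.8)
The plan is to recognize the eight assertions as translations, into our notation, of results already established for general spectral order-unit normed spaces in \cite{SOUS}. By Theorem \ref{th:CompressionBase} the family $\cb$ is a spectral compression base for $A$, so $A$ qualifies as such a space and the full apparatus of \cite{SOUS}---its spectral resolution, eigenprojections, and spectral bounds---is at our disposal. First I would confirm that the objects introduced in Definition \ref{df:SpecRes} coincide with the corresponding objects of \cite{SOUS} under the dictionary set up in the discussion preceding that definition: the Rickart mapping is $a\mapsto 1-a\dg$, so the SOUS spectral projection $((a-\lambda)\sp{+})\,'$ becomes $1-((a-\lambda)\sp{+})\dg=p\sb{\lambda}$ and the SOUS eigenprojection $(a-\lambda)\,'$ becomes $1-(a-\lambda)\dg=d\sb{\lambda}$. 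Once this identification is fixed, the basic order and boundary properties (ii)--(vi) read off directly from \cite[Theorem 3.3]{SOUS}, the infimum (right-continuity) relation (vii) from \cite[Theorem 3.5]{SOUS}, and the supremum relation (viii) from \cite[Theorem 3.6]{SOUS}.

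For part (i) I would argue separately, since \cite{SOUS} delivers only the weaker conclusion $p\sb{\lambda},d\sb{\lambda}\in CPC(a)$, whereas the synaptic structure yields the stronger $p\sb{\lambda},d\sb{\lambda}\in CC(a)$. Applying Theorem \ref{th:Prop a+a-} (i) and Theorem \ref{th:Carrier} (vi) to $a-\lambda$ in place of $a$, and using $C(a-\lambda)=C(a)$, gives $((a-\lambda)\sp{+})\dg\in CC(a)$ and $(a-\lambda)\dg\in CC(a)$; since $1\in CC(a)$ and $CC(a)$ is a linear subspace, it follows that $p\sb{\lambda}=1-((a-\lambda)\sp{+})\dg$ and $d\sb{\lambda}=1-(a-\lambda)\dg$ lie in $CC(a)$. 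The pairwise commuting relations $p\sb{\lambda}Cp\sb{\mu}$, $p\sb{\lambda}Cd\sb{\mu}$, and $d\sb{\lambda}Cd\sb{\mu}$ then follow because $CC(a)$ is a commutative subset of $A$: since $a\in C(a)$ we have $CC(a)\subseteq C(a)$, so any two elements of $CC(a)$ lie in $C(a)$ and hence commute by the definition of the bicommutant.

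The only genuine care required---and the place I expect the main (if modest) obstacle---is the bookkeeping of the translation itself: verifying that the compression base, the projection-cover and comparability properties, and the notion of compatibility used in \cite{SOUS} match the ones furnished here by Theorem \ref{th:CompressionBase} and Lemma \ref{lm:CpNormClosed} (i), and that in each cited formula the substitutions $a\,'\mapsto 1-a\dg$, $a\,''\mapsto a\dg$, $p\,'\mapsto 1-p$, and (for $aCp$) $J\sb{p}(a)\mapsto pa$ are applied consistently. With that dictionary fixed no new computation is needed, and the eight parts follow by direct citation together with the $CC(a)$ strengthening carried out for part (i).
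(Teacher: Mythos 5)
Your proposal is correct and matches the paper's own treatment: the paper likewise disposes of parts (ii)--(viii) by direct citation of \cite[Theorems 3.3, 3.5, and 3.6]{SOUS} via the translation dictionary it sets up after Theorem \ref{th:CompressionBase}, and the strengthening of part (i) to $CC(a)$ is exactly the point the paper flags in the preceding discussion (using Theorem \ref{th:Prop a+a-} (i) and Theorem \ref{th:Carrier} (vi) applied to $a-\lambda$). Your argument that any two elements of $CC(a)$ commute, since $CC(a)\subseteq C(a)$, is also sound.
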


By \cite[Theorem 3.4, Remark 3.1, and Corollary 3.1]{SOUS}, we have 
the following theorem and corollary.

\begin{theorem} \label{th:SpecTh}
Suppose that
$\lambda\sb{0},\lambda\sb{1},...,\lambda\sb{n}\in\reals$ with
$\lambda\sb{0}<L<\lambda\sb{1}<\cdots<\lambda\sb{n}=U$, and let 
$\gamma\sb{i}\in\reals$ with $\lambda\sb{i-1}\leq\gamma\sb{i}
\leq\lambda\sb{i}$ for $i=1,2,...,n$.  Define $u\sb{i}:=p\sb{\lambda
\sb{i}}-p\sb{\lambda\sb{i-1}}$ for $i=1,2,...,n$, and let $\epsilon 
:=\max\{\lambda\sb{i}-\lambda\sb{i-1}: i=1,2,...,n\}$.
Then:
\[
 u\sb{1},u\sb{2},...,u\sb{n}\in P\cap CC(a),\ \
 \sum\sb{i=1}\sp{n}u\sb{i}=1,\text{\ and\ }\left\|a-\sum\sb{i=1}\sp{n}
 \gamma\sb{i}u\sb{i}\right\|\leq\epsilon.
\]
\end{theorem}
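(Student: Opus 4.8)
The plan is to prove this theorem by assembling the three claimed conclusions separately, drawing on the spectral resolution properties already established in Theorem~\ref{th:SpecProps}. Since the theorem is explicitly stated to follow from the cited results in \cite{SOUS}, the essential work is to verify that each piece is an immediate consequence of the structural facts we have in hand, with the norm estimate being the genuine content.

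First I would establish that each $u\sb{i}\in P\cap CC(a)$. By Theorem~\ref{th:SpecProps}(i), every $p\sb{\lambda}\in CC(a)$, and since $CC(a)$ is closed under the vector space operations (it is the double commutant, hence a linear subspace), the difference $u\sb{i}=p\sb{\lambda\sb{i}}-p\sb{\lambda\sb{i-1}}$ again lies in $CC(a)$. To see that $u\sb{i}\in P$, note that $\lambda\sb{i-1}<\lambda\sb{i}$ forces $p\sb{\lambda\sb{i-1}}\leq p\sb{\lambda\sb{i}}$ by Theorem~\ref{th:SpecProps}(iii), and moreover that same part gives $p\sb{\lambda\sb{i}}-p\sb{\lambda\sb{i-1}}=p\sb{\lambda\sb{i}}\wedge(1-p\sb{\lambda\sb{i-1}})$, which is a meet of projections and hence a projection by Lemma~\ref{lm:PisanOMP}. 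For the telescoping identity $\sum\sb{i=1}\sp{n}u\sb{i}=1$, the sum collapses to $p\sb{\lambda\sb{n}}-p\sb{\lambda\sb{0}}$; since $\lambda\sb{n}=U$ we have $p\sb{\lambda\sb{n}}=1$ by Theorem~\ref{th:SpecProps}(v), and since $\lambda\sb{0}<L$ we have $p\sb{\lambda\sb{0}}=0$ by Theorem~\ref{th:SpecProps}(vi), giving $1-0=1$.

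The main obstacle is the norm estimate $\|a-\sum\sb{i=1}\sp{n}\gamma\sb{i}u\sb{i}\|\leq\epsilon$, and here is where I expect the real argument to live. The key inequalities are the ``sandwiching'' relations from Theorem~\ref{th:SpecProps}(ii): for each index $j$, $p\sb{\lambda\sb{j}}(a-\lambda\sb{j})\leq 0$ and $(1-p\sb{\lambda\sb{j}})(a-\lambda\sb{j})\geq 0$. The strategy is to multiply the difference $a-\sum\gamma\sb{i}u\sb{i}$ by each spectral projection $u\sb{k}$ and exploit that the $u\sb{i}$ are pairwise orthogonal projections in $CC(a)$ (so $u\sb{k}u\sb{i}=0$ for $i\neq k$ and $u\sb{k}$ commutes with $a$). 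Compressing by $u\sb{k}$ isolates the $k$-th band: $u\sb{k}(a-\sum\gamma\sb{i}u\sb{i})=u\sb{k}(a-\gamma\sb{k})$, and one shows $-(\lambda\sb{k}-\gamma\sb{k})u\sb{k}\leq u\sb{k}(a-\gamma\sb{k})\leq(\gamma\sb{k}-\lambda\sb{k-1})u\sb{k}$ by applying the two sandwiching inequalities of part (ii) within the band determined by $u\sb{k}=p\sb{\lambda\sb{k}}-p\sb{\lambda\sb{k-1}}$. Because $\lambda\sb{k-1}\leq\gamma\sb{k}\leq\lambda\sb{k}$ and $\lambda\sb{k}-\lambda\sb{k-1}\leq\epsilon$, both bounds are dominated in absolute value by $\epsilon$, so $-\epsilon u\sb{k}\leq u\sb{k}(a-\gamma\sb{k})\leq\epsilon u\sb{k}$ for each $k$.

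Finally I would sum these band estimates. Since $\sum\sb{k}u\sb{k}=1$ and the $u\sb{k}$ are orthogonal, adding the inequalities over $k$ yields $-\epsilon\cdot 1\leq a-\sum\sb{i}\gamma\sb{i}u\sb{i}\leq\epsilon\cdot 1$, because $a=\sum\sb{k}u\sb{k}a$ and the cross terms vanish by orthogonality. The order-unit norm bounds recorded just after Definition~\ref{df:OUNormSpace}---namely that $-b\leq x\leq b$ implies $\|x\|\leq\|b\|$---then give $\|a-\sum\sb{i}\gamma\sb{i}u\sb{i}\|\leq\|\epsilon\cdot 1\|=\epsilon$, completing the proof. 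The one subtlety to handle carefully is justifying that the compressions $u\sb{k}(a-\gamma\sb{k})$ reassemble additively to recover $a-\sum\gamma\sb{i}u\sb{i}$; this follows because the orthogonal family $(u\sb{k})$ sums to $1$ and each $u\sb{k}$ commutes with $a$, so $a=1\cdot a=\sum\sb{k}u\sb{k}a=\sum\sb{k}u\sb{k}(a-\gamma\sb{k})+\sum\sb{k}\gamma\sb{k}u\sb{k}$.
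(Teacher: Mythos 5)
Your argument is correct, but it is genuinely different in character from what the paper does: the paper gives no internal proof of Theorem~\ref{th:SpecTh} at all, deriving it (together with Corollary~\ref{co:AscendingLimit}) as an instance of \cite[Theorem 3.4, Remark 3.1, and Corollary 3.1]{SOUS}, which is legitimate because Theorem~\ref{th:CompressionBase} has verified that $(J\sb{p})\sb{p\in P}$ is a spectral compression base for $A$. You instead reconstruct the approximation argument from scratch inside the synaptic algebra, using only the facts catalogued in Theorem~\ref{th:SpecProps}. What the paper's route buys is economy and uniformity --- all spectral results come from one external machine; what your route buys is a self-contained proof whose only inputs are parts (i)--(iii), (v), (vi) of Theorem~\ref{th:SpecProps}, Lemma~\ref{lm:PisanOMP}(iv), and [SA2], and it makes visible exactly where the estimate $\epsilon$ comes from.

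Two small points to tidy. First, your displayed band estimate has the two endpoints interchanged: from $u\sb{k}\leq p\sb{\lambda\sb{k}}$ and $p\sb{\lambda\sb{k}}(a-\lambda\sb{k})\leq 0$ one gets the \emph{upper} bound $u\sb{k}(a-\gamma\sb{k})\leq(\lambda\sb{k}-\gamma\sb{k})u\sb{k}$, while from $u\sb{k}\leq 1-p\sb{\lambda\sb{k-1}}$ and $(1-p\sb{\lambda\sb{k-1}})(a-\lambda\sb{k-1})\geq 0$ one gets the \emph{lower} bound $-(\gamma\sb{k}-\lambda\sb{k-1})u\sb{k}\leq u\sb{k}(a-\gamma\sb{k})$. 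Since both coefficients lie in $[0,\epsilon]$, your conclusion $-\epsilon u\sb{k}\leq u\sb{k}(a-\gamma\sb{k})\leq\epsilon u\sb{k}$ is unaffected. Second, the step ``multiply an inequality by the positive element $u\sb{k}$'' should be justified explicitly by [SA2]: $u\sb{k}\geq 0$ commutes with the positive elements $-p\sb{\lambda\sb{k}}(a-\lambda\sb{k})$ and $(1-p\sb{\lambda\sb{k-1}})(a-\lambda\sb{k-1})$ (everything in sight lies in the commutative picture generated by $a$ and its spectral projections), so the products are again positive. With those two repairs the proof is complete.
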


According to Theorem \ref{th:SpecTh}, $a$ can be written as 
a norm-convergent integral $a=\int\sb{L-0}\sp{U}\lambda\,dp
\sb{\lambda}$ of Riemann-Stieltjes type; hence $a$ not only 
determines, but it is determined by its spectral resolution.  

\begin{corollary} \label{co:AscendingLimit}
There exists an ascending sequence $a\sb{1}\leq a\sb{2}
\leq\cdots$ in $CC(a)$ such that each $a\sb{n}$ is 
a finite linear combination of projections in the family 
$(p\sb{\lambda})\sb{\lambda\in\reals}$ and $\lim\sb{n\rightarrow
\infty}a\sb{n}=a$. 
\end{corollary}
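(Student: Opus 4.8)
The plan is to build the ascending sequence directly out of Theorem \ref{th:SpecTh} by choosing, for each $n$, a partition of $[\lambda_0,U]$ fine enough to guarantee norm-approximation within $1/n$, and by choosing the sample points $\gamma_i$ cleverly so that the resulting simple elements increase with $n$. First I would fix the spectral bounds $L\leq U$ and, for each $n\in\Nat$, select a partition $\lambda_0<L<\lambda_1<\cdots<\lambda_{m}=U$ with mesh $\epsilon<1/n$; to make the construction monotone in $n$, I would take the $(n+1)$-st partition to be a refinement of the $n$-th one (e.g. halve every subinterval). For each subinterval I would take $\gamma_i:=\lambda_{i-1}$, the \emph{left} endpoint, so that each simple element has the form $s_n:=\sum_{i}\lambda_{i-1}(p_{\lambda_i}-p_{\lambda_{i-1}})$. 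By Theorem \ref{th:SpecTh}, $s_n\in CC(a)$, and $\|a-s_n\|\leq\epsilon<1/n$, which already gives $s_n\to a$.

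The heart of the matter is monotonicity: I must show $s_n\leq s_{n+1}$. The idea is that refining a partition and sampling at left endpoints can only increase the Riemann--Stieltjes sum, because on each old subinterval the finer partition uses sample values that are $\geq$ the old left-endpoint value weighted against nonnegative increments $p_\mu-p_\nu$. Concretely, using Theorem \ref{th:SpecProps}(i) all the $p_\lambda$ commute, so the differences $p_\mu-p_\nu$ for $\nu\leq\mu$ are projections (Theorem \ref{th:SpecProps}(iii)) and in particular lie in $A^+$; since every $p_\lambda\in CC(a)$ everything in sight commutes, and I can compare $s_n$ and $s_{n+1}$ term by term. On a single old subinterval $[\lambda_{i-1},\lambda_i]$ subdivided by the refinement into points $\lambda_{i-1}=\mu_0<\mu_1<\cdots<\mu_k=\lambda_i$, the contribution to $s_{n+1}$ is $\sum_{j}\mu_{j-1}(p_{\mu_j}-p_{\mu_{j-1}})$, while the contribution to $s_n$ is $\lambda_{i-1}(p_{\lambda_i}-p_{\lambda_{i-1}})=\sum_j\lambda_{i-1}(p_{\mu_j}-p_{\mu_{j-1}})$; their difference is $\sum_j(\mu_{j-1}-\lambda_{i-1})(p_{\mu_j}-p_{\mu_{j-1}})$, a sum of nonnegative scalars times projections, hence $\geq 0$ by [SA2]. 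Summing over $i$ yields $s_n\leq s_{n+1}$.

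The step I expect to require the most care is not the algebra but lining up the hypotheses of Theorem \ref{th:SpecTh} across different $n$: that theorem requires $\lambda_0<L$ and $\lambda_{m}=U$ and a strictly increasing partition, so when I refine I must keep the left endpoint strictly below $L$ and the right endpoint exactly at $U$, and verify that the refined list is still strictly increasing with the same outer endpoints. Once the data are arranged so that each partition is a genuine refinement of its predecessor meeting these constraints, the term-by-term comparison above is routine. Finally I would set $a_n:=s_n$; then $(a_n)_{n\in\Nat}$ is an ascending sequence in $CC(a)$, each $a_n$ is a finite linear combination of the projections $p_\lambda$, and $\|a-a_n\|<1/n\to 0$, so $\lim_{n\to\infty}a_n=a$, completing the proof.
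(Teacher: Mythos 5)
Your overall strategy---left-endpoint Riemann--Stieltjes sums over successively refined partitions---is the natural one and is essentially what the cited results of [SOUS] implement, but as written the construction cannot deliver $\|a-a_n\|\to 0$. Theorem \ref{th:SpecTh} forces every partition to contain a point $\lambda\sb{0}<L$ and to have its second point strictly greater than $L$. If the $(n+1)$-st partition is a genuine refinement of the $n$-th, then the point $\lambda\sb{0}$ of the first partition remains the minimum of every later partition, so the first subinterval always has length at least $L-\lambda\sb{0}>0$ and the mesh never drops below this fixed constant; your claim that the mesh can be made $<1/n$ is therefore inconsistent with your nesting requirement. Nor is this merely a failure of the quoted error estimate: assuming $L<U$ (the case $L=U$ is trivial), one checks from Theorem \ref{th:SpecProps} (ii) and (vi) that each term $u\sb{i}(a-\lambda\sb{i-1})$ is nonnegative and that the first term dominates $(L-\lambda\sb{0})p\sb{\lambda\sb{1}\sp{(n)}}$ with $p\sb{\lambda\sb{1}\sp{(n)}}\neq 0$, whence $\|a-a\sb{n}\|\geq L-\lambda\sb{0}$ for every $n$. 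So the step you dismissed as routine bookkeeping---arranging each partition to be a genuine refinement of its predecessor while meeting the endpoint constraints and shrinking the mesh---is in fact impossible, and this is where the proof breaks.

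The repair needs one further idea: by Theorem \ref{th:SpecProps} (vi), $p\sb{\lambda}=0$ for $\lambda<L$, so the unique partition point below $L$ contributes only the term $\lambda\sb{0}\sp{(n)}p\sb{\lambda\sb{1}\sp{(n)}}$, and you may let $\lambda\sb{0}\sp{(n)}$ creep upward toward $L$ (say $\lambda\sb{0}\sp{(n)}=L-2\sp{-n}$) while requiring only the partition points lying in $(L,U]$ to be nested. Replacing $\lambda\sb{0}\sp{(n)}$ by the larger $\lambda\sb{0}\sp{(n+1)}$ changes the bottom term by $(\lambda\sb{0}\sp{(n+1)}-\lambda\sb{0}\sp{(n)})p\sb{\lambda\sb{1}}\geq 0$, so monotonicity survives, and now the first subinterval can also be made shorter than $1/n$, so Theorem \ref{th:SpecTh} does give $\|a-a\sb{n}\|<1/n$. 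The rest of your argument---the telescoping comparison of left-endpoint sums under refinement, using that the increments $p\sb{\mu}-p\sb{\nu}$ are commuting projections---is correct, though positivity of a nonnegative scalar times a positive element is an axiom of ordered vector spaces rather than an instance of [SA2].
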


\begin{definition} \label{df:Spectrum}
A real number $\rho$ belongs to the \emph{resolvent set} of $a$ 
iff there is an open interval $I$ in $\reals$ with $\rho\in I$ 
such that $p\sb{\lambda}=p\sb{\rho}$ for all $\lambda\in I$. 
The \emph{spectrum} of $a$, in symbols $\opspec(a)$, is defined 
to be the complement in $\reals $ of the resolvent set of $a$. 
\end{definition}

As is proved in \cite{SOUS}, $\opspec(a)$ has all of the 
expected basic properties. For instance, by \cite[Theorem 4.3]
{SOUS}, $\opspec(a)$ is a closed nonempty subset of the closed 
interval $[L,U]\subseteq\reals$, $L=\inf(\opspec(a))\in\opspec(a)$, 
$U=\sup(\opspec(a))\in\opspec(a)$, and $\|a\|=\sup\{|\alpha|:
\alpha\in\opspec(a)\}$. By \cite[Theorem 4.4]{SOUS}, $a\in A\sp{+}
\Leftrightarrow\opspec(a)\subseteq\reals\sp{+}$, and by 
\cite[Corollary 5.1]{SOUS}, $a\in P\Leftrightarrow\opspec(a)
\subseteq\{0,1\}$. As a consequence of \cite[Theorem 4.2]{SOUS}, 
every isolated point of $\opspec(a)$ is an eigenvalue of $a$, 
and every eigenvalue of $a$ belongs to $\opspec(a)$. 

\begin{definition} \label{df:Simple}
An element in $A$ is \emph{simple} iff it is a finite linear 
combination of pairwise commuting projections.
\end{definition}

The following result is a consequence of \cite[Theorems 5.2 and 
5.3]{SOUS}.

\begin{theorem} \label{th:SimpleElements}
The simple elements of $A$ are precisely those with finite 
spectrum. Let $a$ be a simple element of $A$. Then $a$ 
can be written uniquely as $a=\sum\sb{i=1}\sp{n}
\alpha\sb{i}u\sb{i}$, where $\alpha\sb{1}<\alpha\sb{2}
<\cdots<\alpha\sb{n}$, $0\not=u\sb{i}\in P$, and 
$\sum\sb{i=1}\sp{n}u\sb{i}=1$. Moreover, $a$ is regular, 
$|a|=\sum\sb{i=1}\sp{n}|\alpha\sb{i}|u\sb{i}$, $\|a\|
=\max\{|\alpha\sb{i}|: i=1,2,...,n\}$, $a\dg=
\sum\sb{\alpha\sb{i}\not=0}u\sb{i}$, and $u\sb{i}=
d\sb{\alpha\sb{i}}$ for $i=1,2,...,n$. 
\end{theorem}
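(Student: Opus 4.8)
The plan is to reduce to an orthogonal form and then run everything through the explicit spectral resolution. First I would replace an arbitrary simple element $a=\sum\sb{k=1}\sp{m}\beta\sb{k}q\sb{k}$ (with the $q\sb{k}$ pairwise commuting projections) by an orthogonal decomposition. Since the $q\sb{k}$ commute, for each $S\subseteq\{1,\dots,m\}$ the product $e\sb{S}:=\prod\sb{k\in S}q\sb{k}\prod\sb{k\notin S}(1-q\sb{k})$ is a projection; these $e\sb{S}$ are pairwise orthogonal, satisfy $\sum\sb{S}e\sb{S}=\prod\sb{k}(q\sb{k}+(1-q\sb{k}))=1$, and obey $ae\sb{S}=(\sum\sb{k\in S}\beta\sb{k})e\sb{S}$. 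Discarding the zero terms, collecting those $e\sb{S}$ on which $a$ takes a common value, and relabelling, I obtain $a=\sum\sb{i=1}\sp{n}\alpha\sb{i}u\sb{i}$ with $\alpha\sb{1}<\cdots<\alpha\sb{n}$, $0\neq u\sb{i}\in P$ pairwise orthogonal, and $\sum\sb{i}u\sb{i}=1$.

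The key step is to compute the spectral resolution of this orthogonal form. Setting $b:=\sum\sb{\alpha\sb{i}>\lambda}(\alpha\sb{i}-\lambda)u\sb{i}$ and $c:=\sum\sb{\alpha\sb{i}\leq\lambda}(\lambda-\alpha\sb{i})u\sb{i}$, orthogonality gives $b,c\geq 0$, $b-c=a-\lambda$, and $bc=cb=0$, so $(b+c)\sp{2}=(b-c)\sp{2}=(a-\lambda)\sp{2}$; uniqueness of square roots (Theorem \ref{th:UniqueSR}) then yields $b+c=|a-\lambda|$ and hence $(a-\lambda)\sp{+}=b=\sum\sb{\alpha\sb{i}>\lambda}(\alpha\sb{i}-\lambda)u\sb{i}$. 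A short carrier computation shows $((a-\lambda)\sp{+})\dg=\sum\sb{\alpha\sb{i}>\lambda}u\sb{i}$: one inequality is immediate from Theorem \ref{th:Carrier} (iii), and for the other, if $p\in P$ satisfies $(a-\lambda)\sp{+}p=(a-\lambda)\sp{+}$ then left-multiplication by $u\sb{j}$ (for $\alpha\sb{j}>\lambda$) together with $\alpha\sb{j}-\lambda>0$ forces $u\sb{j}p=u\sb{j}$, i.e. $u\sb{j}\leq p$ (Theorem \ref{th:e<=p}), so $\sum\sb{\alpha\sb{i}>\lambda}u\sb{i}\leq p$. Therefore $p\sb{\lambda}=1-((a-\lambda)\sp{+})\dg=\sum\sb{\alpha\sb{i}\leq\lambda}u\sb{i}$, a family that is locally constant off $\{\alpha\sb{1},\dots,\alpha\sb{n}\}$ and jumps by $u\sb{i}$ at $\alpha\sb{i}$. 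By Definition \ref{df:Spectrum} this gives $\opspec(a)=\{\alpha\sb{1},\dots,\alpha\sb{n}\}$ (finite), and the jump identity Theorem \ref{th:SpecProps} (viii) identifies each jump as the eigenprojection, so $u\sb{i}=d\sb{\alpha\sb{i}}$.

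For the converse, suppose $\opspec(a)=\{\alpha\sb{1}<\cdots<\alpha\sb{n}\}$ is finite. Each $\alpha\sb{i}$, being isolated, is an eigenvalue, so $u\sb{i}:=d\sb{\alpha\sb{i}}\neq 0$; these lie in $CC(a)$ and are pairwise orthogonal by parts (i) and (iv) of Theorem \ref{th:SpecProps}. Since the resolvent set is $\reals\setminus\{\alpha\sb{1},\dots,\alpha\sb{n}\}$, the resolution $(p\sb{\lambda})$ is constant between consecutive $\alpha\sb{i}$, vanishes below $L=\alpha\sb{1}$, equals $1$ at $U=\alpha\sb{n}$, and jumps by $u\sb{i}$ at $\alpha\sb{i}$, forcing $p\sb{\lambda}=\sum\sb{\alpha\sb{i}\leq\lambda}u\sb{i}$ and in particular $\sum\sb{i}u\sb{i}=p\sb{U}=1$. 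Choosing in Theorem \ref{th:SpecTh} a partition that places each $\alpha\sb{i}$ in its own subinterval with tag $\gamma=\alpha\sb{i}$, the Riemann--Stieltjes sum equals $\sum\sb{i}\alpha\sb{i}u\sb{i}$ exactly and approximates $a$ to within the mesh, whence $a=\sum\sb{i}\alpha\sb{i}u\sb{i}$ and $a$ is simple. Uniqueness of the canonical form is then immediate, since the $\alpha\sb{i}$ are exactly the points of $\opspec(a)$ and each $u\sb{i}=d\sb{\alpha\sb{i}}$ is determined by $a$.

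It remains to read off the ``moreover'' clauses from the orthogonal form. Orthogonality gives $a\sp{2}=\sum\sb{i}\alpha\sb{i}\sp{2}u\sb{i}$, and since $\sum\sb{i}|\alpha\sb{i}|u\sb{i}\geq 0$ squares to $a\sp{2}$, Theorem \ref{th:UniqueSR} yields $|a|=\sum\sb{i}|\alpha\sb{i}|u\sb{i}$. The same carrier argument as above (or Corollary \ref{co:StrongAxiii} (ii) applied to $|a|$) gives $a\dg=\sum\sb{\alpha\sb{i}\neq 0}u\sb{i}$, and then $\epsilon:=\min\{|\alpha\sb{i}|:\alpha\sb{i}\neq 0\}>0$ satisfies $\epsilon a\dg\leq|a|$, so $a$ is regular by Definition \ref{df:Regular}. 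Finally $\|a\|=\max\{|L|,|U|\}=\max\{|\alpha\sb{1}|,|\alpha\sb{n}|\}=\max\sb{i}|\alpha\sb{i}|$, the last equality because $\alpha\sb{1}\leq\alpha\sb{i}\leq\alpha\sb{n}$ bounds each $|\alpha\sb{i}|$ by $\max\{|\alpha\sb{1}|,|\alpha\sb{n}|\}$. The main obstacle is the heart of the second paragraph---showing that the positive part and carrier of the orthogonal form respect its decomposition---since once $p\sb{\lambda}=\sum\sb{\alpha\sb{i}\leq\lambda}u\sb{i}$ is established, the finiteness of the spectrum, the identification $u\sb{i}=d\sb{\alpha\sb{i}}$, uniqueness, and all the norm, absolute-value, carrier, and regularity formulas follow mechanically from the spectral results of Section \ref{sc:Spectral}.
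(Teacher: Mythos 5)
Your proof is correct, but it is worth knowing that the paper itself gives no proof of this theorem: it is stated as a direct consequence of \cite[Theorems 5.2 and 5.3]{SOUS}, so your argument is necessarily a different --- and more self-contained --- route. You rederive everything inside the synaptic-algebra framework from results already established or quoted in the paper: the reduction of a commuting family of projections to an orthogonal resolution of the identity via the products $e_S$; the identity $(a-\lambda)^{+}=\sum_{\alpha_i>\lambda}(\alpha_i-\lambda)u_i$ via uniqueness of square roots (Theorem \ref{th:UniqueSR}); the carrier computation giving $p_\lambda=\sum_{\alpha_i\le\lambda}u_i$; and then Theorems \ref{th:SpecProps} and \ref{th:SpecTh} to read off the spectrum, the eigenprojections, and the reconstruction of $a$ from its finite spectral resolution. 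This buys the reader a proof that does not require consulting \cite{SOUS} for anything beyond the spectral machinery already imported in Section \ref{sc:Spectral}. Two points deserve one extra line each: in the uniqueness step you should observe that any representation $a=\sum_{i}\alpha_i u_i$ with $0\ne u_i\in P$ and $\sum_i u_i=1$ automatically has pairwise orthogonal $u_i$ (since $u_j\le 1-u_i$ for $j\ne i$), so your spectral-resolution computation applies to an arbitrary such representation and not just to the one you constructed; and the claim that $(p_\lambda)$ is constant between consecutive $\alpha_i$ needs the standard remark that local constancy of $\lambda\mapsto p_\lambda$ on an interval of the resolvent set implies constancy on that interval. Neither is a substantive gap.
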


As a consequence of Corollary \ref{co:AscendingLimit}, each 
element $a\in A$ is the norm limit (hence by Theorem 
\ref{th:AscendingLimit} (ii) also the supremum) of an ascending 
sequence of pairwise commuting simple elements, and it follows 
that the simple elements in $A$ (hence by Theorem 
\ref{th:SimpleElements}, also the regular elements in $A$) are 
norm-dense in $A$. 

\begin{theorem} \label{th:SpectralCommutativity} 
If $b\in A$, then $bCa$ iff $bCp\sb{\lambda}$ for all $\lambda
\in\reals$.
\end{theorem}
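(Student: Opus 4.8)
The plan is to prove the two implications separately, with the forward direction following immediately from the fact that every spectral projection lies in $CC(a)$, and the reverse direction resting on the spectral approximation of $a$ together with condition [SA9].

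For the forward implication, suppose $bCa$, so that $b\in C(a)$. By Theorem~\ref{th:SpecProps}~(i), each $p\sb{\lambda}\in CC(a)=C(C(a))$, which means precisely that $p\sb{\lambda}$ commutes with every element of $C(a)$. Since $b\in C(a)$, it follows at once that $p\sb{\lambda}Cb$, i.e., $bCp\sb{\lambda}$, for every $\lambda\in\reals$. This direction requires no calculation beyond invoking the membership $p\sb{\lambda}\in CC(a)$ already established.

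For the reverse implication, suppose $bCp\sb{\lambda}$ for all $\lambda\in\reals$. By Corollary~\ref{co:AscendingLimit}, there is an ascending sequence $a\sb{1}\leq a\sb{2}\leq\cdots$ in $CC(a)$ with $a\sb{n}\rightarrow a$, where each $a\sb{n}$ is a finite linear combination of projections drawn from the family $(p\sb{\lambda})\sb{\lambda\in\reals}$. First I would observe that the sequence $(a\sb{n})\sb{n\in\Nat}$ is pairwise commuting: by Theorem~\ref{th:SpecProps}~(i) we have $p\sb{\lambda}Cp\sb{\mu}$ for all $\lambda,\mu\in\reals$, so any two finite linear combinations of the $p\sb{\lambda}$ commute. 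Next, since $b$ commutes with every $p\sb{\lambda}$ by hypothesis, $b$ commutes with each finite linear combination of them; hence $a\sb{n}\in C(b)$ for every $n\in\Nat$. Thus $(a\sb{n})\sb{n\in\Nat}$ is an ascending sequence of pairwise commuting elements of $C(b)$ converging in norm to $a$, and [SA9] yields $a\in C(b)$, i.e., $bCa$.

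The main (and essentially the only) obstacle is verifying that all four hypotheses of [SA9] hold simultaneously for the approximating sequence supplied by Corollary~\ref{co:AscendingLimit}: ascending monotonicity and norm convergence come for free from that corollary, while pairwise commutativity and membership in $C(b)$ follow respectively from the mutual commutativity of the spectral projections (Theorem~\ref{th:SpecProps}~(i)) and the standing hypothesis $bCp\sb{\lambda}$. Once these are in place, [SA9] performs exactly the task for which it was formulated, namely transferring commutativity from the approximants to their limit.
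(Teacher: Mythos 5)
Your proof is correct and follows essentially the same route as the paper: the forward direction from $p\sb{\lambda}\in CC(a)$, and the converse by applying [SA9] to the ascending sequence of Corollary \ref{co:AscendingLimit}. The only cosmetic difference is that you justify pairwise commutativity of the $a\sb{n}$ via $p\sb{\lambda}Cp\sb{\mu}$, while the paper deduces it from $a\sb{n}\in CC(a)$; both are immediate.
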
 
 
\begin{proof}
For $\lambda\in\reals$, we have $p\sb{\lambda}\in CC(a)$; hence 
$bCa$ implies that $bCp\sb{\lambda}$. Conversely, suppose that 
$bCp\sb{\lambda}$ for all $\lambda\in\reals$ and let $(a\sb{n})
\sb{n\in\Nat}$ be the ascending sequence in Corollary 
\ref{co:AscendingLimit}. As $a\sb{n}\in CC(a)$ for all $n\in\Nat$, 
the elements of the sequence $(a\sb{n})\sb{n\in\Nat}$ commute with 
each other. As each $a\sb{n}$ is a finite linear combination of 
projections $p\sb{\lambda}$, we have $a\sb{n}\in C(b)$ for all 
$n\in\Nat$, and it follows from [SA9] that $a\in C(b)$.  
\end{proof}

\begin{theorem} \label{th:CofaSynaptic}
$C(a)$ is norm-closed in $A$ and, with the partial order 
inherited from $A$, $C(a)$ is a synaptic algebra with unit $1$ 
and enveloping algebra $R$. Let $b,c\in C(a)$. Then: $b\circ c,
\,b\dg,\,|b|,\,b\sp{+},\,b\sp{-},\,J\sb{b}(c),\,\phi\sb{b}(c)
\in C(a)$; $0\leq b\Rightarrow b\sp{1/2}\in C(A)$; and the 
spectral resolution and family of eigenprojections of $b$ are 
the same whether calculated in $A$ or in $C(a)$.
\end{theorem}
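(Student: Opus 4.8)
The plan is to establish the claim in three stages: show that $C(a)$ is norm-closed, verify axioms [SA1]--[SA9] for $C(a)$ (with unit $1$ and enveloping algebra $R$), and then check that the listed operations stay inside $C(a)$ and that the spectral data are computed identically in $A$ and in $C(a)$.

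For norm-closedness I would invoke Theorem \ref{th:SpectralCommutativity}, which yields $C(a)=\bigcap\sb{\lambda\in\reals}C(p\sb{\lambda})$, where $(p\sb{\lambda})\sb{\lambda\in\reals}$ is the spectral resolution of $a$. Each $C(p\sb{\lambda})$ is norm-closed by Lemma \ref{lm:CpNormClosed} (ii), and an intersection of norm-closed sets is norm-closed, so $C(a)$ is norm-closed. That $C(a)$ is a linear subspace of $R$ containing $1$ is immediate, and $C(a)$ is closed under the associative product of $R$: if $b,c\in C(a)$ then $a(bc)=(ab)c=(ba)c=b(ac)=b(ca)=(bc)a$. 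The single observation that drives everything that follows is the inclusion $CC(b)\subseteq C(a)$ for every $b\in C(a)$; indeed $b\in C(a)$ means $a\in C(b)$, so every element of $CC(b)=C(C(b))$ commutes with $a$.

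With these facts in hand, [SA1] follows just as for $vAv$ in Theorem \ref{th:vAv}: the partial order, the archimedean property, and the unit are inherited from $A$, the element $1$ remains an order unit in $C(a)$, and the order-unit norm of $C(a)$ is the restriction of that of $A$. Axioms [SA2]--[SA5] hold because the relevant products $bc$, squares $b\sp{2}$, and quadratic expressions $bcb=J\sb{b}(c)$ formed from elements of $C(a)$ again lie in $C(a)$ (using the subalgebra property together with Remarks \ref{rm:Jordan} and Theorem \ref{th:QuadLinOP}), while the required positivity is supplied by the corresponding axiom in $A$. For [SA6], [SA7], and [SA8] I would take the square root $b\sp{1/2}$ (Theorem \ref{th:UniqueSR}), the carrier $b\dg$ (Theorem \ref{th:Carrier} (vi)), and the inverse $b\sp{-1}$ (for $1\leq b$) furnished by $A$; each lies in $CC(b)$ and hence, by the observation above, in $C(a)$. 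Each also continues to satisfy its defining property within $C(a)$: the carrier and inverse equations hold by restriction, and the double-commutant requirements are automatic because $C\sb{C(a)}(b)\subseteq C(b)$, so an element of $CC(b)$ commutes \emph{a fortiori} with all of $C\sb{C(a)}(b)$. Finally [SA9] is inherited from $A$ verbatim, the limit lying in $C(a)$ because $C(a)$ is norm-closed.

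The remaining closure statements follow the same pattern: $b\circ c=\frac12(bc+cb)$, $J\sb{b}(c)=bcb$, $b\sp{+}$, and $b\sp{-}$ are built from products of elements of $C(a)$, whereas $b\dg$, $|b|=(b\sp{2})\sp{1/2}$, $b\sp{1/2}$, and (through its carrier) $\phi\sb{b}(c)=(bcb)\dg$ all lie in the appropriate double commutant and hence in $C(a)$, using Remarks \ref{rm:AbsProps} for $|b|,b\sp{+},b\sp{-}$. For the agreement of spectral data, the crucial point is that the carrier of an element $d\in C(a)$ is the same whether computed in $A$ or in $C(a)$: the $A$-carrier $d\dg$ lies in $C(a)$ and satisfies the carrier property for every $c\in C(a)$, so by the uniqueness in Theorem \ref{lm:UniqueCarrier} (applied inside $C(a)$) it is also the $C(a)$-carrier. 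Since $p\sb{\lambda}=1-((b-\lambda)\sp{+})\dg$ and $d\sb{\lambda}=1-(b-\lambda)\dg$ (Definition \ref{df:SpecRes}) are expressed through carriers of elements of $C(a)$, they coincide in $A$ and in $C(a)$. The main obstacle is precisely the bookkeeping around the double-commutant conditions in [SA6]--[SA8] and the matching of carriers, square roots, and inverses across the two algebras; once the inclusion $CC(b)\subseteq C(a)$ for $b\in C(a)$ is isolated these verifications become routine, and the one genuinely structural input is the spectral characterization of commutation (Theorem \ref{th:SpectralCommutativity}) used to obtain norm-closedness.
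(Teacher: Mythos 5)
Your proposal is correct and follows the paper's own route for the only part the paper proves in detail: norm-closedness of $C(a)$ via Theorem \ref{th:SpectralCommutativity} combined with Lemma \ref{lm:CpNormClosed} (ii). The remainder, which the paper dismisses as ``completely straightforward,'' you fill in correctly, and your isolation of the inclusion $CC(b)\subseteq C(a)$ for $b\in C(a)$ is exactly the right organizing observation for verifying [SA6]--[SA8] and the agreement of carriers, square roots, and spectral data.
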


\begin{proof}
Suppose that $(b\sb{n})\sb{n\in\Nat}$ is a sequence in C(a) and 
$b\sb{n}\rightarrow b\in A$. Then by Theorem 
\ref{th:SpectralCommutativity}, $b\sb{n}\in C(p\sb{\lambda})$ 
for all $n\in\Nat$ and all $\lambda\in\reals$, and it follows 
from Lemma \ref{lm:CpNormClosed} (ii) that $b\in C(p\sb{\lambda})$ 
for all $\lambda\in\reals$. Therefore, $b\in C(a)$ by Theorem 
\ref{th:SpectralCommutativity}, whence $C(a)$ is norm-closed in 
$A$.  The remainder of the proof is omitted as it is completely 
straightforward  
\end{proof}

\end{document}